\newif\ifpreprint
\def\NAT@def@citea{\def\@citea{\NAT@separator}}% Suppress spaces between citations using natbib.sty
\def\usebfsetcapital{\def\setcapital##1{\mathbf{##1}}}
\def\setR{\setcapital{R}}
\newcommand{\st}{\mathrm{s.t.}}
\newcommand\indicator{\iota}
\newcommand\condition[1]{\quad \text{#1}}
\newcommand\forallcondition[1]{\condition{for all~$#1$}}
\newcommand\eqand{\quad \text{and} \quad}
\newcommand\otherwise{\text{otherwise}}
\DeclareMathOperator*{\argmax}{argmax}
\DeclareMathOperator*{\argmin}{argmin}
\DeclareMathOperator*{\myliminf}{liminf}
\renewcommand{\liminf}{\myliminf}
\DeclareMathOperator*{\conv}{conv}
\DeclareMathOperator{\dom}{dom}
\DeclareMathOperator{\prox}{\mathbf{prox}}
\DeclareMathOperator{\proj}{\mathbf{proj}}
\DeclareMathOperator{\envelope}{\mathcal{M}}
\DeclareMathOperator{\level}{\mathbf{lev}}
\DeclarePairedDelimiter{\abs}{\lvert}{\rvert}
\DeclarePairedDelimiter{\norm}{\lVert}{\rVert}
\DeclarePairedDelimiter{\set}{\lbrace}{\rbrace}
\DeclarePairedDelimiterX{\Set}[2]{\lbrace}{\rbrace}{#1\mathrel{}\delimsize\vert\mathrel{}#2}
\DeclarePairedDelimiterX{\innerp}[2]{\langle}{\rangle}{#1, #2}
\newcommand{\T}{\top\hspace{-1pt}}
\newcommand{\ra}[1]{\renewcommand{\arraystretch}{#1}}
\theoremstyle{plain}
\newtheorem{theorem}{Theorem}[section]
\newtheorem{lemma}[theorem]{Lemma}
\newtheorem{corollary}[theorem]{Corollary}
\newtheorem{proposition}[theorem]{Proposition}
\theoremstyle{definition}
\newtheorem{definition}[theorem]{Definition}
\newtheorem{example}[theorem]{Example}
\theoremstyle{remark}
\newtheorem{remark}{Remark}
\crefname{equation}{}{}
\Crefname{equation}{Eq.}{Eqs.}
\crefname{figure}{Figure}{Figures}
\crefname{assumption}{Assumption}{Assumptions}
\begin{document}

\title{New merit functions for multiobjective optimization and their properties}

\ifpreprint
    \author[1]{Hiroki Tanabe}
    \author[2]{Ellen H. Fukuda}
    \author[2]{Nobuo Yamashita}

    \affil[1]{Yahoo Japan Corporation}
    \affil[2]{Kyoto University}
    \affil[ ]{\{tanabe.hiroki.45n@kyoto-u.jp\},\{ellen,nobuo\}@i.kyoto-u.ac.jp}

    \date{}
\else
    \author{
        \name{Hiroki Tanabe\textsuperscript{a}\thanks{Hiroki Tanabe Email: tanabe.hiroki.45n@kyoto-u.jp}, Ellen H. Fukuda\textsuperscript{b}\thanks{Ellen H. Fukuda Email: ellen@i.kyoto-u.ac.jp}, and Nobuo Yamashita\textsuperscript{b}\thanks{Nobuo Yamashita Email: nobuo@i.kyoto-u.ac.jp}}
        \affil{\textsuperscript{a}Yahoo Japan Corporation\\
        \textsuperscript{b}Kyoto University}
    }
\fi

\maketitle

\begin{abstract}
    A merit (gap) function is a map that returns zero at the solutions of problems and strictly positive values otherwise.
    Its minimization is equivalent to the original problem by definition, and it can estimate the distance between a given point and the solution set.
    Ideally, this function should have some properties, including the ease of computation, continuity, differentiability, boundedness of the level set, and error boundedness.
    In this work, we propose new merit functions for multiobjective optimization with lower semicontinuous objectives, convex objectives, and composite objectives, and we show that they have such desirable properties under reasonable assumptions.
\end{abstract}

\ifpreprint
\else
    \begin{keywords}
        Multiobjective optimization; merit functions; Pareto stationarity; error bounds; composite optimization
    \end{keywords}
    
    \begin{amscode}
        90C29; 90C30
    \end{amscode}
\fi

\section{Introduction}
\label{intro}
Multiobjective optimization is an important field of research with many practical applications.
It minimizes several objective functions at once, but usually, there does not exist a single point that minimizes all objective functions simultaneously.
Therefore, we use the concept of \emph{Pareto optimality}.
We call a point Pareto optimal if there does not exist another point with the same or smaller objective function values and with at least one objective function value being strictly smaller.
However, for non-convex problems, it is difficult to get Pareto optimal solutions. Thus, we use the concept of \emph{Pareto stationarity}. A point is called Pareto stationary if there does not exist a descent direction from it.

Many algorithms for getting Pareto optimal or Pareto stationary solutions have been developed, including the scalarization approaches~\cite{Gass1955,Geoffrion1968,Zadeh1963}, the metaheuristics~\cite{Gandibleux2004}, and the descent methods~\cite{Fliege2009,Fliege2000,Fukuda2014,Tanabe2019}.
However, from a practical point of view (e.g., estimating convergence rates and describing the duality gap), it is essential to know how far a feasible point is from the Pareto set.
Within the context of optimization and equilibrium problems, one of the most commonly used tools to meet such needs is the \emph{merit (gap) functions}~\cite{Auslender1976,Hearn1982}, which return zero at the problem's solutions and strictly positive values otherwise.
Merit functions were first proposed by Auslender~\cite{Auslender1976} in 1976 for variational inequality problems, and became widely known after Hearn~\cite{Hearn1982} re-proposed the same functions and named them \emph{gap functions} when studying the dual gap for convex programming problems in 1982.
If we can minimize the merit function globally, we can obtain the solution of the original problem.
For this reason, the merit functions should have the following properties:
\begin{itemize}
    \item Their values at given points can be quickly evaluated;
    \item Continuity;
    \item (Directional) differentiability;
    \item Their stationary points solve the original problem;
    \item Level-boundedness, i.e., their level sets are bounded;
    \item They provide error bounds, i.e., they are lower bounded by some multiple of the distance between a given point and the solution set of the original problem.
\end{itemize}
It is worth commenting that merit functions have been extensively studied for more general problems such as quasi-variational inequalities~\cite{Franco1995,Fukushima2007,Gupta2012,Mirzaee2017}.
For a comprehensive survey of merit functions for variational inequality and complementarity problems, we refer the reader to the work of Fukushima~\cite{Fukushima1996}.

This paper is not the first attempt to develop merit functions for multiobjective or vector problems.
However, such studies are relatively new compared to the history of research on merit functions for single-objective or scalar problems~\cite{Auslender1976,Fukushima1996,Harker1990}.
First, in 1998, Chen, Goh, and Yang~\cite{Chen1998} developed the merit function for polyhedral-constrained convex multiobjective optimization.
Afterward, various merit functions were considered for vector variational inequalities~\cite{Chen2000,Konnov2005,Li2005,Yang2002,Yang2003,Altangerel2007,Charitha2010,Li2010} and vector equilibrium problems~\cite{Huang2007,Li2005,Li2007,Li2006,Mastroeni2003}.
In 2010, Li and Mastroeni~\cite{Li2010} introduced gap functions with error bounds, and Charitha and Dutta~\cite{Charitha2010} studied regularized gap functions and D-gap functions with continuity, directional differentiability, and error bounds, both for finite-dimensional convex-constrained vector variational inequalities.
On the other hand, the merit functions with error bounds for convex-constrained multiobjective or vector optimization were considered in 2009 for linear objectives~\cite{Liu2009} and in 2017 for convex objectives~\cite{Dutta2017}.
Also, Soleimani-damaneh~\cite{Soleimani-damaneh2008} defines the merit function for multiobjective optimization problems in Banach space, and discusses differentiability by Clarke's generalized gradients.
However, most of those papers do not discuss all of the desired properties mentioned above.
In particular, few discussions exist, related to the computing methods of the merit functions.

Let us now consider the following multiobjective optimization problem:
\begin{equation} \label{eq: CMOP}
    \min_{x \in S} \quad F(x)
,\end{equation} 
where~$F \colon S \to \setR^m$ is a vector-valued function with~$F \coloneqq (F_1, \dots, F_m)^\T$, and~$S \subseteq \setR^n$ is nonempty, closed, and convex.
Here, we propose the following three merit functions for~\cref{eq: CMOP}: a simple one for lower semicontinuous problems, a regularized one for convex problems, and a regularized and partially linearized one for composite problems, i.e., problems with each objective being the sum of a differentiable but not necessarily convex function and a convex but not necessarily differentiable one.
In \cref{tab: merit properties}, we summarize the properties of those merit functions, which will be shown in the subsequent sections.
There, `Sol.' represents the types of Pareto solutions for~\cref{eq: CMOP} corresponding to the minima (zero points) of the merit functions.
Moreover, `SP,' `LB,' and `EB' indicate each~$F_i$'s sufficient conditions so that stationary points of the merit functions can solve~\cref{eq: CMOP}, the merit functions are level-bounded, and the merit functions provide error bounds, respectively.
The simple one connects its minima and the weak Pareto solutions of~\cref{eq: CMOP} but does not have good properties in other aspects.
The regularized one has better properties but requires the convexity of~$F_i$.
The convexity assumption is relaxed in the regularized and partially linearized one, which is also easy to compute for particular problems.
\begin{table}[htpb]
    \caption{Properties of our proposed merit functions}
    \label{tab: merit properties}
    \begin{subtable}{\textwidth}
        \centering
        \caption{Proposed merit functions and their properties}
        \begin{tblr}{hline{1,2,5}={solid}, hline{3,4}={dashed}, width=\textwidth, colspec={@{}X[4,l]X[c]X[c]X[c]X[c]X[c]X[2,c]X[c]@{}}, rowspec={Q[m]Q[m,2em]Q[m]Q[m]}, column{even}={gray!20}}
        & Obj. & Sol. & Cont. & Diff. & SP & LB & EB \\
            Simple & Cont. & \SetCell[r=2]{m}WPO & LSC & $\times$ & $\times$ & LB &\SetCell[r=3]{m} PL \\
%            \cmidrule(rl){1-1}
%            \cmidrule(rl){2-2}
%            \cmidrule(rl){4-7}
            Regularized & Conv. & & \SetCell[r=2]{m}Cont. &\SetCell[r=2]{m} DD & SC & Conv., LB & \\
%            \cmidrule(rl){1-1}
%            \cmidrule(rl){2-3}
%            \cmidrule(rl){6-8}
            Regularized and partially linearized & Comp. & PS & & & SC, $C^2$ & Conv., LB, etc. & \\
        \end{tblr}
    \end{subtable}

    \bigskip
    \begin{subtable}{\textwidth}
        \centering
        \ra{1.3}
        \caption{Table of abbreviations}
        \begin{tblr}{@{}cc@{}}
            \hline
            Obj. & Objective functions \\
            Sol. & Solutions \\
            Cont. & Continuity \\
            Diff. & Differentiability \\
            SP & Stationary points \\
            LB & Level-boundedness \\
            EB & Error bounds \\
            Cont. & Continuity \\
            Comp. & Compositeness \\
            WPO & Weak Pareto optimality \\
            PS & Pareto stationarity \\
            LSC & Lower semicontinuity \\
            DD & Directional differentiability \\
            SC & Strict convexity \\
            $C^2$ & Twice continuous differentiability \\
            PL & Multiobjective proximal-PL inequality \\
            \hline
        \end{tblr}
    \end{subtable}
\end{table}

The outline of this paper is as follows.
In \cref{sec: preliminaries}, we introduce some notations and concepts used in the subsequent discussion.
\Cref{sec: merit} proposes different merit functions for multiobjective optimization with lower semicontinuous objectives, convex objectives, and composite objectives, along with methods for evaluating the function values, the differentiability, and the stationary point properties.
Furthermore, sufficient conditions for them to be level-bounded and to provide error bounds are given in \cref{sec: level-bounded,sec: error bound}, respectively.

\section{Preliminaries}\label{sec: preliminaries}
\subsection{Notations and definitions}
Let us present some notions and definitions used in this paper.
We use the symbol~$\norm*{\cdot}$ for the Euclidean norm in~$\setR^n$.
For~$u, v \in \setR^n$, the notation~$u \le v \, (u < v)$ means that~$u_i \le v_i \, (u_i < v_i)$ for all~$i = 1, \dots, m$.
The zero vector is denoted by~$0$ without mentioning the dimension.
We also define the standard simplex~$\Delta^m \subseteq \setR^m$ by
\begin{equation} \label{eq: simplex}
    \Delta^m \coloneqq \Set*{\lambda \in \setR^m}{\lambda \ge 0 \eqand \sum_{i = 1}^{m} \lambda_i = 1}
.\end{equation} 
Let~$S \subseteq \setR^n$ be a nonempty closed convex set, and let $x \in S$. The \emph{normal cone} of $S$ at $x$, denoted by $N_S(x)$, is defined as
\begin{equation} \label{eq: normal cone}
    N_S(x) \coloneqq \Set*{z \in \setR^n}{z^\T (y - x) \le 0, y \in S}.
\end{equation}
Furthermore, the \emph{convex hull} of a set $A \subseteq \setR^n$, denoted by $\conv(A)$, is the smallest convex set containing $A$.

Now, we introduce some definitions of functions.
A function $h \colon S \to \setR \cup \set{\infty}$ is \emph{lower semicontinuous} at~$x \in S$ if~$h(x) \le \liminf_{k \to \infty} h(x^k)$ for any sequence $\set{x^k} \subseteq S$ convergent to $x$.
In particular, if $h$ is lower semicontinuous at every point on $S$, we say that $h$ is lower semicontinuous or \emph{closed} on $S$.
On the other hand, $h$ is \emph{proper} if its effective domain, defined by $\dom(h) \coloneqq \Set{x \in S}{h(x) < \infty}$, is not empty.
Moreover, we call
\[
    h^\prime(x; d) \coloneqq \lim_{t \searrow 0} \frac{h(x + t d) - h(x)}{t}
\]
the directional derivative of $h \colon S \subseteq \setR^n \to \setR \cup \{\infty\}$ at~$x \in S$ with~$h(x) < \infty$ in the direction~$d \in \setR^n$. Note that $h^\prime(x; d) = \nabla h(x)^\T d$ when $h$ is differentiable at $x$, where $\nabla h(x)$ stands for the gradient of $h$ at $x$, and~$\T$ denotes transpose.
In addition, given~$\sigma > 0$, we say that~$h$ is~$\sigma$-convex if
\[
    h(\alpha x + (1 - \alpha)y) \le \alpha h(x) + (1 - \alpha)h(y) - \frac{\alpha (1 - \alpha) \sigma}{2} \norm*{x - y}^2
\] 
for all~$x, y \in S$ and~$\alpha \in [0, 1]$.
In particular,~$0$-convexity is equivalent to the usual convexity, and when~$\sigma > 0$,~$h$ is called strongly convex.
For a convex function $h \colon S \to \setR \cup \set{\infty}$, we define the subdifferential of~$h$ at~$x \in S$ as
\begin{equation} \label{eq: subdifferential}
    \partial h(x) = \Set*{h' \in \setR^n}{h(y) \ge h(x) + (h')^\T (y - x) \forallcondition{y \in S}}.
\end{equation}

We now suppose that~$h \colon \setR^n \to \setR \cup \set{\infty}$ is a closed, proper, and convex function.
Then, the \emph{Moreau envelope} or \emph{Moreau-Yosida regularization}~$\envelope_h \colon \setR^n \to \setR$ is given by
\begin{equation} \label{eq: Moreau envelope}
    \envelope_h(x) \coloneqq \min_{y \in \setR^n} \left\{ h(y) + \frac{1}{2} \norm*{x - y}^2 \right\} 
.\end{equation} 
The minimization problem in~\cref{eq: Moreau envelope} has a unique solution because of the strong convexity of its objective functions.
By this solution, the \emph{proximal operator} is defined as
\begin{equation} \label{eq: proximal operator}
    \prox_h(x) \coloneqq \argmin_{y \in \setR^n} \left\{ h(y) + \frac{1}{2} \norm*{x - y}^2 \right\} 
.\end{equation} 
The proximal operator is non-expansive, i.e.,~$\norm*{\prox_h(x) - \prox_h(y)} \le \norm*{x - y}$.
This also means that~$\prox_h$ is $1$-Lipschitz continuous.
Moreover, when~$h$ is the \emph{indicator function} of~$C \subseteq \setR^n$, i.e,
\begin{equation} \label{eq: indicator function}
    \indicator_C(x) \coloneqq \begin{dcases}
        0, & x \in C, \\
        \infty, & x \notin C
    ,\end{dcases}
\end{equation} 
where~$C \subseteq \setR^n$ is a nonempty closed convex set, the proximal operator of~$h$ reduces to the projection onto~$C$, i.e.,
\begin{equation} \label{eq: proximal operator of the indicator function}
    \prox_{\indicator_C}(x) = \proj_C(x) \coloneqq \argmin_{y \in C} \norm*{x - y}
.\end{equation} 
Recall that~$h$ is closed, proper, and convex.
Even if~$h$ is non-differentiable, its Moreau envelope~$\envelope_h$ is known to be differentiable.
\begin{theorem}~\cite[Theorem 6.60]{Beck2017} \label{thm: smoothness of Moreau envelope}
    Let~$h \colon \setR^n \to \setR \cup \set*{\infty}$ be a closed, proper, and convex function.
    Then,~$\envelope_h$ has an~$1$-Lipschitz continuous gradient given by
    \[
        \nabla \envelope_h(x) = x - \prox_h(x)
    .\] 
\end{theorem}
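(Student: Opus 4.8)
The plan is to establish the gradient formula $\nabla\envelope_h(x) = x - \prox_h(x)$ directly, by sandwiching the first-order behaviour of $\envelope_h$ near an arbitrary point, and then to deduce $1$-Lipschitz continuity of this gradient from the firm non-expansiveness of $\prox_h$. Throughout write $p \coloneqq \prox_h(x)$, so that the optimality condition for the strongly convex problem in~\cref{eq: proximal operator} gives $x - p \in \partial h(p)$ and $\envelope_h(x) = h(p) + \frac{1}{2}\norm*{x - p}^2$. We first observe that $\envelope_h$ is finite everywhere: it is bounded above by plugging any point of $\dom h$ into~\cref{eq: Moreau envelope}, and bounded below since a closed, proper, and convex $h$ has an affine minorant.

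For the lower estimate, fix $y \in \setR^n$ and set $q \coloneqq \prox_h(y)$, so that $\envelope_h(y) = h(q) + \frac{1}{2}\norm*{y - q}^2$. Combining this with the subgradient inequality $h(q) \ge h(p) + \innerp*{x - p}{q - p}$ and simplifying (the difference reduces to $\frac{1}{2}\norm*{(x - p) - (y - q)}^2 \ge 0$) yields
\[
    \envelope_h(y) \ge \envelope_h(x) + \innerp*{x - p}{y - x}
,\]
so that $x - p \in \partial\envelope_h(x)$. For the upper estimate, evaluating the minimization defining $\envelope_h(y)$ at the suboptimal point $p$ gives $\envelope_h(y) \le h(p) + \frac{1}{2}\norm*{y - p}^2$; expanding $\norm*{y - p}^2 = \norm*{y - x}^2 + 2\innerp*{y - x}{x - p} + \norm*{x - p}^2$ then produces
\[
    \envelope_h(y) \le \envelope_h(x) + \innerp*{x - p}{y - x} + \frac{1}{2}\norm*{y - x}^2
.\]
Together, the two bounds force $0 \le \envelope_h(y) - \envelope_h(x) - \innerp*{x - p}{y - x} \le \frac{1}{2}\norm*{y - x}^2$; since the remainder is $o(\norm*{y - x})$, $\envelope_h$ is differentiable at $x$ with $\nabla\envelope_h(x) = x - p = x - \prox_h(x)$.

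For the Lipschitz bound, fix $x, y \in \setR^n$ and put $u \coloneqq x - y$ and $w \coloneqq \prox_h(x) - \prox_h(y)$, so $\nabla\envelope_h(x) - \nabla\envelope_h(y) = u - w$. Applying monotonicity of $\partial h$ to $x - \prox_h(x) \in \partial h(\prox_h(x))$ and $y - \prox_h(y) \in \partial h(\prox_h(y))$ gives $\innerp*{u - w}{w} \ge 0$, that is, $\innerp*{u}{w} \ge \norm*{w}^2$; hence
\[
    \norm*{u - w}^2 = \norm*{u}^2 - 2\innerp*{u}{w} + \norm*{w}^2 \le \norm*{u}^2 - \norm*{w}^2 \le \norm*{u}^2
,\]
which is precisely $\norm*{\nabla\envelope_h(x) - \nabla\envelope_h(y)} \le \norm*{x - y}$.

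The only genuinely delicate point is the differentiability step: possessing a subgradient does not by itself yield differentiability, so the argument really needs the two-sided estimate, with the quadratic upper bound doing the work of pinning the subgradient down to be the gradient. Everything else --- the completions of squares in the two estimates, and the firm non-expansiveness of $\prox_h$ used at the end --- is routine; since the excerpt records only plain non-expansiveness of $\prox_h$, I would spell out the one-line monotonicity derivation rather than invoke it as known.
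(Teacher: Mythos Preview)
Your proof is correct and self-contained. Note, however, that the paper does not supply its own proof of this statement: it is quoted as a known result from Beck's textbook~\cite[Theorem~6.60]{Beck2017}, so there is no in-paper argument to compare against. What you have written is a standard and clean derivation---the two-sided sandwich giving Fr\'echet differentiability, followed by firm non-expansiveness of $\prox_h$ via monotonicity of $\partial h$---and it would serve perfectly well as a proof were one required. Your closing remark about spelling out the monotonicity step (rather than citing firm non-expansiveness directly) is apt given that the paper only records plain non-expansiveness.
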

We also refer to the so-called second prox theorem as well as a corollary quickly derived from it.
\begin{theorem}~\cite[Theorem 6.39]{Beck2017} \label{thm: second prox}
    Let~$h \colon \setR^n \to \setR \cup \set*{\infty}$ be a closed, proper, and convex function.
    Then, for any~$x \in \setR^n$, we have
    \[
        \left( x - \prox_h(x) \right)^\T \left( y - \prox_h(x) \right) \le h(y) - h\left( \prox_h(x) \right) \forallcondition{y \in \setR^n}
    .\] 
\end{theorem}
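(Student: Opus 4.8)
The plan is to read the claim off the defining minimization of the proximal operator. Write $p \coloneqq \prox_h(x)$, so that by~\cref{eq: proximal operator} the point $p$ is the unique global minimizer of the closed, proper, convex function $g(y) \coloneqq h(y) + \tfrac12 \norm*{x - y}^2$. The shortest route is via the first-order optimality condition: since the quadratic term is real-valued and differentiable on all of $\setR^n$, the subdifferential sum rule applies without any constraint qualification, giving $0 \in \partial g(p) = \partial h(p) + (p - x)$, i.e.\ $x - p \in \partial h(p)$. Applying the subgradient inequality for $h$ at $p$ then yields $h(y) \ge h(p) + (x - p)^\T (y - p)$ for every $y \in \setR^n$, which is exactly the stated inequality after rearranging.

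If one prefers to stay elementary and avoid subdifferential calculus, the same bound follows from a first-order perturbation argument. Fix $y \in \setR^n$ and $t \in (0, 1]$ and set $y_t \coloneqq (1 - t) p + t y$. Minimality of $p$ for $g$ gives $g(p) \le g(y_t)$; bounding $h(y_t) \le (1 - t) h(p) + t h(y)$ by convexity of $h$, expanding $\norm*{x - y_t}^2 = \norm*{x - p}^2 - 2 t (x - p)^\T (y - p) + t^2 \norm*{y - p}^2$, cancelling the $h(p)$ terms, and dividing by $t > 0$ leaves
\[
    h(p) \le h(y) - (x - p)^\T (y - p) + \frac{t}{2} \norm*{y - p}^2
.\]
Letting $t \searrow 0$ removes the last term and gives the claim.

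Neither route contains a genuine obstacle: the first needs only the observation that the sum rule is unconditional here because $\tfrac12 \norm*{x - \cdot}^2$ is finite everywhere, and the second needs only the routine algebra of expanding the squared norm and passing to the limit. The point worth emphasizing is conceptual rather than technical — that the minimization defining $\prox_h$ automatically encodes this monotone, variational-inequality-type estimate, which is precisely what makes it useful in the subsequent analysis of the regularized merit functions.
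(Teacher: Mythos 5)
The paper offers no proof of this statement; it is quoted verbatim from \cite[Theorem 6.39]{Beck2017}, so there is nothing internal to compare against. Your argument is correct and self-contained: the first route (optimality of $p = \prox_h(x)$ for $h(\cdot) + \tfrac12\norm{x-\cdot}^2$, the unconditional sum rule since the quadratic is finite and differentiable everywhere, hence $x - p \in \partial h(p)$, then the subgradient inequality) is essentially the textbook's own proof, and your second, subdifferential-free perturbation argument is a valid elementary alternative whose only extra care is the harmless case $h(y) = \infty$, where the inequality is trivial, and the observation that $h(p) < \infty$ because $p$ minimizes a proper function. Either route would serve as a legitimate replacement for the citation.
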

\begin{corollary} \label{cor: second prox}
    Let~$h \colon \setR^n \to \setR \cup \set*{\infty}$ be a closed, proper, and convex function.
    Then, it follows that
    \[
        \norm*{x - \prox_h(x)}^2 \le h(x) - h \left( \prox_h(x) \right) \forallcondition{x \in \setR^n}
    .\] 
\end{corollary}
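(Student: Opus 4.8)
The plan is to obtain the claim as an immediate specialization of the second prox theorem (\cref{thm: second prox}). First I would invoke that theorem for the given closed, proper, convex~$h$ and the given~$x \in \setR^n$, which yields
\[
    \left( x - \prox_h(x) \right)^\T \left( y - \prox_h(x) \right) \le h(y) - h\left( \prox_h(x) \right)
\]
for every~$y \in \setR^n$. The key (and essentially only) step is then to choose~$y = x$ in this inequality: the left-hand side collapses to~$\left( x - \prox_h(x) \right)^\T \left( x - \prox_h(x) \right) = \norm*{x - \prox_h(x)}^2$, while the right-hand side becomes~$h(x) - h\left( \prox_h(x) \right)$, giving exactly the asserted bound.

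Two minor points I would address for completeness. If~$x \notin \dom h$, then~$h(x) = \infty$ and the inequality holds trivially (note~$h\left( \prox_h(x) \right)$ is finite since~$\prox_h(x)$ is the minimizer in a strongly convex problem with proper objective, so it lies in~$\dom h$); if~$x \in \dom h$, all quantities are finite and the substitution above is valid verbatim. Since the statement quantifies over all~$x \in \setR^n$, handling both cases suffices.

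There is no real obstacle here: the corollary is a one-line consequence of \cref{thm: second prox} via the substitution~$y = x$, and the proof can be stated in just a couple of sentences.
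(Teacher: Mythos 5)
Your proof is correct and matches the derivation the paper intends (it gives no explicit proof, describing the corollary as "quickly derived" from \cref{thm: second prox}): substituting~$y = x$ into the second prox theorem's inequality immediately yields the claim. Your extra remark about~$x \notin \dom h$ is a sensible completeness check but does not change the argument.
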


We move on to H\"older and Lipschitz continuities.
We call~$h \colon \setR^n \to \setR$ to be \emph{locally H\"older continuous} with exponent~$\beta > 0$ if for every bounded set~$\Omega \subseteq \setR^n$ there exists~$L > 0$ such that
\[
    \abs{h(x) - h(y)} \le L \norm{x - y}^\beta \forallcondition{x, y \in \Omega}
.\]
In particular, when~$L$ does not depend on~$\Omega$, we say that~$h$ is H\"older continuous with exponent~$\beta > 0$.
Moreover, we refer to the (local) H\"older continuity with exponent~$1$ as the \emph{(local) Lipschitz continuity}.
When~$h$ is Lipschitz continuous, we call~$L$ the \emph{Lipschitz constant}, and we also say that~$h$ is \emph{$L$-Lipschitz continuous}.
As the following lemma shows, many functions with \emph{good} properties are locally Lipschitz continuous.
\begin{lemma} \label{thm:local_Lipschitz}
    Continuously differentiable functions and finite-valued convex functions are locally Lipschitz continuous.
\end{lemma}
\begin{proof}
    The former is due to the mean value theorem, and the latter is from~\cite{WayneStateUniversity1972}.
\end{proof}

Finally, we recall a fact on sensitivity analysis for the following parameterized optimization problem:
\begin{equation} \label{eq: parameterized optimization problem}
    \min_{x \in X} h(x, \xi)
,\end{equation}
depending on the parameter vector~$\xi \in \Xi$.
Here,~$h \colon X \times \Xi \subseteq \setR^p \times \setR^q \to \setR \cup \set{\infty}$ is the objective function.
We assume that~$X \subseteq \setR^p$ and~$\Xi \subseteq \setR^q$ are nonempty and closed.
Let us write the optimal value function of~\cref{eq: parameterized optimization problem} as
\begin{equation} \label{eq: optimal value function}
    \phi(\xi) \coloneqq \inf_{x \in X} h(x, \xi)
\end{equation} 
and the associated set as
\begin{equation} \label{eq: optimal solution mapping}
    \Phi(\xi) \coloneqq \Set*{x \in X}{\phi(\xi) = h(x, \xi)}
.\end{equation} 
The following proposition describes the directional differentiability of the optimal value function~$\phi$.
\begin{proposition}~\cite[Proposition 4.12]{Bonnans2000} \label{prop: directional differentiability}
    Let~$\xi^0 \in \Xi$.
    Suppose that
    \begin{enumerate}
        \item the function~$h(x, \xi)$ is continuous on~$X \times \Xi$; \label{prop: directional differentiability: continuity}
        \item there exist~$\alpha \in \setR$ and a compact set~$C \subseteq X$ such that for every~$\hat{\xi}$ near~$\xi^0$, the level set~$\level_\alpha h(\cdot, \hat{\xi})$ is nonempty and contained in~$C$; \label{prop: directional differentiability: level set}
        \item for any~$x \in X$ the function~$h_x(\cdot) \coloneqq h(x, \cdot)$ is directionally differentiable at~$\xi^0$; \label{prop: directional differentiability: directional differentiability}
        \item if~$\xi \in \Xi$,~$t_k \searrow 0$, and~$\set*{x^k}$ is a sequence in~$C$ given by~\ref{prop: directional differentiability: level set}, then~$\set*{x^k}$ has a limit point~$\bar{x}$ such that
            \[
                \limsup_{k \to \infty} \frac{h(x^k, \xi^0 + t_k (\xi - \xi^0)) - h(x^k, \xi^0)}{t_k} \ge h_{\bar{x}}^\prime(\xi^0; \xi - \xi^0)
            .\] \label{prop: directional differentiability: limit}
    \end{enumerate}
    Then, the optimal value function~$\phi$ given by~\cref{eq: optimal value function} is directionally differentiable at~$\xi^0$ and
    \[
        \phi^\prime(\xi^0; \xi - \xi^0) = \inf_{x \in \Phi(\xi^0)} h_x^\prime(\xi^0; \xi - \xi^0)
    .\] 
\end{proposition}

\subsection{Optimality, stationarity, and level-boundedness for multiobjective optimization}
We first introduce the concept of optimality and stationarity for~\cref{eq: CMOP}.
Note that vector-to-vector inequalities are componentwise, as defined at the beginning of the previous section.
Recall that $x^\ast \in S$ is \emph{Pareto optimal} if there is no $x \in S$ such that $F(x) \le F(x^\ast)$ and $F(x) \neq F(x^\ast)$.
Likewise, $x^\ast \in S$ is \emph{weakly Pareto optimal} if there does not exist $x \in S$ such that $F(x) < F(x^\ast)$. It is known that Pareto optimal points are always weakly Pareto optimal, and the converse is not true necessarily. When~$F$ is directionally differentiable, we also say that $\bar{x} \in S$ is \emph{Pareto stationary}~\cite{Tanabe2019} if
\[
    \max_{i = 1, \dots, m} F_i^\prime(\bar{x}; z - \bar{x}) \ge 0 \quad \text{for all } z \in S.
\]
We state below the relation between the three concepts of Pareto optimality.
\begin{lemma}~\cite[Lemma 2.2]{Tanabe2019} \label{lem: Pareto relationship}
    When~$F$ is directionally differentiable, the following three statements hold.
    \begin{enumerate}
        \item If $x \in S$ is weakly Pareto optimal for~\eqref{eq: CMOP}, then $x$ is Pareto stationary.\label{enum: weak stationary}
        \item Let every component~$F_i$ of $F$ be convex. If $x \in S$ is Pareto stationary for~\eqref{eq: CMOP}, then $x$ is weakly Pareto optimal.\label{enum: stationary weak}
        \item Let every component~$F_i$ of $F$ be strictly convex. If $x \in S$ is Pareto stationary for~\eqref{eq: CMOP}, then $x$ is Pareto optimal.\label{enum: stationary pareto}
    \end{enumerate}
\end{lemma}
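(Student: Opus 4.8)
\emph{Approach.} I would prove all three statements by contraposition, after isolating the one elementary fact that does the work: if $f \colon S \to \setR$ is convex and $x, z \in S$, then $f^\prime(x; z - x) \le f(z) - f(x)$. This holds because the difference quotient $t \mapsto \bigl(f(x + t(z-x)) - f(x)\bigr)/t$ is nondecreasing on $(0,1]$, so its limit as $t \searrow 0$ (which is $f^\prime(x; z-x)$) is at most its value at $t = 1$ (which is $f(z) - f(x)$). Parts~\ref{enum: stationary weak} and~\ref{enum: stationary pareto} reduce to this inequality, while part~\ref{enum: weak stationary} needs no convexity at all.

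\emph{Part~\ref{enum: weak stationary}.} Suppose $x$ is not Pareto stationary, so there exists $z \in S$ with $F_i^\prime(x; z - x) < 0$ for every $i$. By the definition of the directional derivative, for each $i$ there is $t_i > 0$ such that $F_i(x + t(z-x)) < F_i(x)$ whenever $0 < t < t_i$; moreover $x + t(z-x) \in S$ for all $t \in (0,1)$ by convexity of $S$. Choosing $t \in \bigl(0, \min\{1, t_1, \dots, t_m\}\bigr)$ produces a feasible point $x + t(z-x)$ with $F(x + t(z-x)) < F(x)$, so $x$ is not weakly Pareto optimal.

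\emph{Parts~\ref{enum: stationary weak} and~\ref{enum: stationary pareto}.} For~\ref{enum: stationary weak}, assume each $F_i$ is convex and $x$ is not weakly Pareto optimal, i.e., $F(z) < F(x)$ for some $z \in S$. The preliminary fact gives $F_i^\prime(x; z - x) \le F_i(z) - F_i(x) < 0$ for all $i$, hence $\max_i F_i^\prime(x; z - x) < 0$ and $x$ is not Pareto stationary. For~\ref{enum: stationary pareto}, assume each $F_i$ is strictly convex and $x$ is not Pareto optimal, so there is $z \in S$ with $F(z) \le F(x)$, $F(z) \neq F(x)$, and necessarily $z \neq x$. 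Set $w \coloneqq \tfrac12(x + z) \in S$; strict convexity yields $F_i(w) < \tfrac12 F_i(x) + \tfrac12 F_i(z) \le F_i(x)$ for every $i$, so $F(w) < F(x)$, and then the preliminary fact applied in the direction $w - x$ gives $F_i^\prime(x; w - x) \le F_i(w) - F_i(x) < 0$ for all $i$, so again $x$ is not Pareto stationary.

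\emph{Main obstacle.} None of the steps is deep; the only place that takes a moment's thought is~\ref{enum: stationary pareto}, where failure of Pareto optimality furnishes only the componentwise inequality $F(z) \le F(x)$ with a single strict coordinate, and one cannot immediately conclude $F_i^\prime(x; z-x) < 0$ for all $i$. Passing to the midpoint $w$ resolves this by upgrading the nonstrict domination to strict componentwise domination at a feasible point, after which the convexity inequality applies verbatim. I expect this (mild) maneuver to be the crux.
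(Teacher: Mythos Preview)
Your argument is correct in all three parts; the midpoint trick in~\ref{enum: stationary pareto} is exactly the right way to upgrade weak domination to strict domination under strict convexity, and the remaining steps are routine. Note, however, that the paper does not supply its own proof of this lemma---it is quoted from~\cite[Lemma~2.2]{Tanabe2019}---so there is nothing in the present paper to compare your proof against; your write-up stands on its own as a clean, self-contained justification.
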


Now, let us extend the level-boundedness~\cite[Definition 1.8]{Rockafellar1998} for a scalar-valued function~$f \colon S \to \setR$, i.e.,~$\level_\alpha f \coloneqq \Set*{x \in S}{f(x) \le \alpha}$ is bounded for any~$\alpha \in \setR$, to a vector-valued function as follows.
\begin{definition}
    A vector-valued function~$F \colon S \to \setR^m$ is level-bounded if the level set~$\level_\alpha F \coloneqq \Set{x \in S}{F(x) \le \alpha}$ is bounded for all~$\alpha \in \setR^m$.
\end{definition}
If $F_i$ is level-bounded for all $i = 1, \dots, m$, then $F = (F_1, \dots, F_m)^\T$ is also level-bounded.
Note that even if $F$ is level-bounded, every $F_i$ is not necessarily level-bounded (e.g. $F(x) = (x - 1, - 2 x + 1)^\T$).
Now, we show the existence of weakly Pareto optimal points under the level-boundedness assumption.
Recall that throughout the paper, the feasible set~$S$ is nonempty, closed, and convex.
\begin{theorem}
    If $F$ is closed and level-bounded, then \eqref{eq: CMOP} has a weakly Pareto optimal solution.
\end{theorem}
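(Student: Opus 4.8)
The plan is to reduce the statement to minimizing a scalar surrogate function over a compact set, using the Weierstrass theorem for lower semicontinuous functions.

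First, since $S$ is nonempty, I would fix an arbitrary $x^0 \in S$ and consider the sublevel set $C \coloneqq \level_{F(x^0)} F = \Set{x \in S}{F(x) \le F(x^0)}$. This set is nonempty since it contains $x^0$, and it is bounded by the level-boundedness of $F$. To see that it is closed, write $C = \bigcap_{i = 1}^{m} \Set{x \in S}{F_i(x) \le F_i(x^0)}$; each $F_i$ is lower semicontinuous and $S$ is closed, so each set in this finite intersection is closed, and hence so is $C$. Therefore $C$ is compact.

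Next, I would minimize the scalar function $g(x) \coloneqq \sum_{i = 1}^{m} F_i(x)$ over $C$. As a finite sum of real-valued lower semicontinuous functions, $g$ is lower semicontinuous, so by the Weierstrass theorem it attains its minimum over the compact set $C$ at some $x^\ast \in C$. Finally, I would check that $x^\ast$ is weakly Pareto optimal for~\cref{eq: CMOP}. Suppose, for contradiction, that there is $x \in S$ with $F(x) < F(x^\ast)$. Since $x^\ast \in C$ gives $F(x^\ast) \le F(x^0)$, we obtain $F(x) < F(x^0)$, hence $x \in C$. Summing the strict componentwise inequality $F(x) < F(x^\ast)$ over $i = 1, \dots, m$ yields $g(x) < g(x^\ast)$, contradicting the minimality of $x^\ast$. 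Thus no such $x$ exists, and $x^\ast$ is weakly Pareto optimal.

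The argument is essentially routine, and I do not expect a serious obstacle; the one point that needs care is that one cannot scalarize directly over all of $S$, since a weighted sum such as $g$ need not be bounded below on $S$ (for instance $F(x) = (x - 1, -2x + 1)^\T$ as in the remark preceding the theorem). Passing first to the compact sublevel set $C$ is what makes the scalarization well posed, and it comes for free that the resulting minimizer is weakly Pareto optimal with respect to the whole feasible set $S$, not merely within $C$, because any dominating point automatically lies in $C$.
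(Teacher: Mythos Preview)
Your proof is correct, but it takes a different route from the paper's. The paper scalarizes with the maximum $\max_{i} F_i$ rather than the sum, and minimizes this directly over $S$: the point is that $\level_\alpha(\max_i F_i) = \level_{(\alpha,\dots,\alpha)} F$, so level-boundedness of $F$ transfers immediately to the scalar function $\max_i F_i$, and one can appeal to existence of a minimizer of a lower semicontinuous, level-bounded function without first carving out a compact set. Your approach instead isolates a compact vector sublevel set $C$ explicitly, which is precisely what allows you to use the sum (or indeed any strictly monotone scalarization) despite the unboundedness issue you correctly flagged. Both arguments are short and valid; the paper's choice of $\max$ is a bit more direct because the scalarization itself inherits level-boundedness, while yours makes the Weierstrass step and the passage from ``optimal in $C$'' to ``weakly Pareto optimal in $S$'' fully explicit.
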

\begin{proof}
    Let $F$ be closed and level-bounded.
    Then the level set~$\level_\alpha F \coloneqq
    \Set{ x \in S}{F_i(x) \le \alpha \text{ for all }
    i = 1, \dots, m}$ is bounded for all~$\alpha \in \setR$.
    Now, we have
    \[
        \level_\alpha F = \Set*{ x \in S }{\max_{i = 1, \dots, m} F_i(x) \le \alpha} = \level_\alpha \left( \max_{i = 1, \dots, m} F_i \right),
    \]
    so~$\max_i F_i$ is also level-bounded.
    Moreover, since~$F_i$ is closed for each~$i = 1, \dots, m$,~$\max_i F_i$ is also closed.
    Thus, the problem
    \begin{align*}
        \min \quad & \max_{i = 1, \dots, m} F_i(x) \\
        \st \quad  & x \in S
    \end{align*}
    has a global optimal solution~$x^\ast$. This gives
    \[
        \quad \max_{i = 1, \dots, m} F_i(x^\ast)
            \le \max_{i = 1, \dots, m} F_i(x) \quad \forallcondition{x \in S}.
    \]
    Since $\max_{i = 1, \dots, m} a_i - \max_{i = 1, \dots, m} (a_i - b_i) \le \max b_i$, we have $\max_{i = 1, \dots, m} F_i(x) - \max_{i = 1, \dots, m} (F_i(x) - F_i(x^*)) \le \max_{i = 1, \dots, m} F_i(x^*)$ for all $x \in S$, which together with the above inequality gives $\max_{i = 1, \dots, m} (F_i(x^*) - F_i(x)) \le 0$ for all $x \in S$.
    As this means $F_i(x^*) \le F_i(x)$ for all~$i = 1, \dots, m$ and all~$x \in S$, we obtain the result.
\end{proof}

\section{New merit functions for multiobjective optimization} \label{sec: merit}
A merit function associated with an optimization problem is a function that returns zero at their solutions and strictly positive values otherwise, which implies that it is nonnegative~\cite{Auslender1976,Hearn1982}.
This section proposes different types of merit functions for the multiobjective optimization problem~\cref{eq: CMOP}, considering three cases, respectively, when the objective function~$F$ is lower semicontinuous, when it is convex, and when it has a composite structure.

\subsection{A simple merit function for lower semicontinuous multiobjective optimization} \label{sec: merit continuous}
First, we assume only lower semicontinuity on~$F$ and propose a simple merit function~$u_0 \colon S \to \setR \cup \set*{\infty}$ as follows:
\begin{equation} \label{eq: u_0}
    u_0(x) \coloneqq \sup_{y \in S} \min_{i = 1, \dots, m} \left\{ F_i(x) - F_i(y) \right\} 
.\end{equation}
When~$F$ is linear, this merit function has already been discussed in~\cite{Liu2009}, but here we consider the more general nonlinear cases.
We now show that $u_0$ is a merit function in the sense of weak Pareto optimality.
\begin{theorem} \label{thm: u}
    Let~$u_0$ be defined by~\cref{eq: u_0}.
    Then, we have~$u_0(x) \ge 0$ for all~$x \in S$.
    Moreover,~$x \in S$ is weakly Pareto optimal for~\cref{eq: CMOP} if and only if~$u_0(x) = 0$.
\end{theorem}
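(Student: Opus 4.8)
The plan is to establish the two claims separately: first non-negativity of $u_0$, then the characterization of its zeros in terms of weak Pareto optimality.

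For non-negativity, I would simply observe that for any fixed $x \in S$ we may take $y = x$ in the supremum defining $u_0(x)$; then $\min_{i} \{F_i(x) - F_i(x)\} = 0$, so $u_0(x) \ge 0$ for all $x \in S$. This also shows $u_0$ never takes the value $-\infty$, though it may a priori equal $+\infty$.

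For the characterization, I would argue by contraposition on each implication, or more cleanly by negating the relevant quantifiers. Recall that $x$ fails to be weakly Pareto optimal exactly when there is some $y \in S$ with $F(y) < F(x)$, i.e. $F_i(x) - F_i(y) > 0$ for every $i = 1, \dots, m$, which is equivalent to $\min_{i} \{F_i(x) - F_i(y)\} > 0$. Hence $x$ is \emph{not} weakly Pareto optimal iff there exists $y \in S$ making the inner minimum strictly positive, which forces $u_0(x) > 0$ (the supremum of a set containing a strictly positive number, together with the value $0$ attained at $y = x$, is strictly positive). Conversely, if $x$ \emph{is} weakly Pareto optimal, then for every $y \in S$ there is at least one index $i$ with $F_i(y) \ge F_i(x)$, so $\min_{i}\{F_i(x) - F_i(y)\} \le 0$ for all $y$; combined with the fact that $0$ is attained at $y = x$, this yields $u_0(x) = 0$. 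Putting the two directions together gives: $u_0(x) = 0$ if and only if $x$ is weakly Pareto optimal.

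I do not anticipate a serious obstacle here — the argument is essentially unwinding the definitions of $u_0$ and of weak Pareto optimality and matching up the quantifiers. The only mild subtlety is bookkeeping around the possibility that $u_0(x) = +\infty$: one must phrase the weakly-not-Pareto-optimal direction as ``$u_0(x) \ge \min_i\{F_i(x) - F_i(y)\} > 0$'' rather than claiming finiteness, and note that the value $0$ is always in the set over which we take the supremum so that $u_0(x) \ge 0$ throughout. Continuity of $F$ is not actually needed for this particular theorem; it will matter later for lower semicontinuity of $u_0$.
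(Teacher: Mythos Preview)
Your proposal is correct and follows essentially the same route as the paper: non-negativity by taking $y = x$ in the supremum, and the equivalence by unwinding the quantifiers in the definitions of $u_0$ and weak Pareto optimality. The paper argues the equivalence in one shot via the chain $u_0(x)=0 \iff \min_i\{F_i(x)-F_i(y)\}\le 0$ for all $y \iff$ no $y$ dominates $x$, whereas you split it into the two contrapositive implications, but the logical content is identical.
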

\begin{proof}
    Let~$x \in S$.
    By the definition~\cref{eq: u_0} of~$u_0$, we get
    \[
        u_0(x) = \sup_{y \in S} \min_{i = 1, \dots, m} \left\{ F_i(x) - F_i(y) \right\}
                 \ge \min_{i = 1, \dots, m} \left\{ F_i(x) - F_i(x) \right\} = 0
    .\] 
    On the other hand, again considering the definition~\cref{eq: u_0} of~$u_0$, we obtain
    \[
        u_0(x) = 0 \iff \min_{i = 1, \dots, m} \left\{ F_i(x) - F_i(y) \right\} \le 0 \text{ for all } y \in S
    .\] 
    So, there does not exist~$y \in S$ such that
    \[
        F_i(x) - F_i(y) > 0 \forallcondition{i = 1, \dots, m}
    ,\] 
    which means that~$x$ is weakly Pareto optimal for~\cref{eq: CMOP} by definition.
\end{proof}

The following theorem is clear from the lower semicontinuity of~$F_i$ and~\cite[Theorem~10.3]{Rooij1982}.
\begin{theorem} \label{thm: u continuity}
    The function~$u_0$ defined by~\cref{eq: u_0} is lower semicontinuous on~$S$.
\end{theorem}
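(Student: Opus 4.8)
The plan is to write $u_0$ as a pointwise supremum of continuous functions and to invoke the standard fact that such a supremum is lower semicontinuous. First I would fix $y \in S$ and consider the function $g_y \colon S \to \setR$ given by $g_y(x) \coloneqq \min_{i = 1, \dots, m}\{F_i(x) - F_i(y)\}$. Since each $F_i$ is continuous on $S$, each map $x \mapsto F_i(x) - F_i(y)$ is continuous, and the pointwise minimum of finitely many continuous functions is continuous; hence $g_y$ is continuous, in particular lower semicontinuous, on $S$. By the definition~\cref{eq: u_0} we then have $u_0(x) = \sup_{y \in S} g_y(x)$ for all $x \in S$.

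Next I would recall that the pointwise supremum of an arbitrary family of lower semicontinuous functions is lower semicontinuous. This follows directly from the characterization that a function $f$ is lower semicontinuous if and only if every sublevel set $\level_\alpha f$ is closed, together with the identity $\level_\alpha\!\left(\sup_{y \in S} g_y\right) = \bigcap_{y \in S} \level_\alpha g_y$, which exhibits the sublevel sets of $u_0$ as intersections of closed sets. Applying this to the family $\{g_y\}_{y \in S}$ yields that $u_0$ is lower semicontinuous on $S$.

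One minor point to address is that $u_0$ may take the value $+\infty$ at some points of $S$; this does not affect the argument, since lower semicontinuity for extended-real-valued functions is still characterized by closedness of the sublevel sets $\level_\alpha$, and the value $+\infty$ never lies in any such set. I expect no genuine obstacle here: the statement is essentially a repackaging of the elementary ``supremum of lower semicontinuous functions is lower semicontinuous'' lemma, so the only care needed is in making the reduction to that lemma explicit and handling the extended-valued case.
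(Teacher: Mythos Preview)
Your argument is correct and matches the paper's approach: the paper gives no detailed proof but simply remarks that the result ``is clear from the continuity of~$F_i$,'' which is precisely the supremum-of-continuous-functions observation you have spelled out. Nothing further is needed.
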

\Cref{thm: u continuity,thm: u} imply that if~$u_0(x^k) \to 0$ holds for some bounded sequence~$\set*{x^k}$, its accumulation points are weakly Pareto optimal.
Thus, we can use~$u_0$ to measure the convergence rate of multiobjective optimization methods (i.e.,~\cite{Tanabe2023}).

Moreover, \cref{thm: u} implies that we can get weakly Pareto optimal solutions via the following single-objective optimization problem:
\[
    \min_{x \in S} \quad u_0(x)
.\] 
However, in some cases, such as when~$F_i$ is not bounded from below on~$S$ for all~$i = 1, \dots, m$, we cannot guarantee that~$u_0$ is finite-valued.
Moreover, even if~$u_0$ is finite-valued,~$u_0$ does not preserve the differentiability of the original objective function~$F$.

\subsection{A regularized merit function for convex multiobjective optimization} \label{sec: merit convex}
Here, we suppose that each component~$F_i$ of the objective function~$F$ of~\cref{eq: CMOP} is convex.
Then, we define a regularized merit function~$u_\ell \colon S \to \setR$ with a given constant~$\ell > 0$, which overcomes the shortcomings mentioned at the end of the previous subsection, as follows:
\begin{equation} \label{eq: u_ell}
    u_\ell(x) \coloneqq \max_{y \in S} \min_{i = 1, \dots, m} \left\{ F_i(x) - F_i(y) - \frac{\ell}{2}\norm*{x - y}^2 \right\} 
.\end{equation} 
Note that the strong concavity of the function inside~$\max_{y \in S}$ and~\cite[Theorem~2.2.6]{Nesterov2004} imply that there exists a unique solution~$U_\ell(x) \in S$ given by
\begin{equation} \label{eq: U_ell}
    U_\ell(x) \coloneqq \argmax_{y \in S} \min_{i = 1, \dots, m} \left\{ F_i(x) - F_i(y) - \frac{\ell}{2}\norm*{x - y}^2 \right\} 
\end{equation}
and~$u_\ell$ is finite-valued.
Like~$u_0$, we can show that~$u_\ell$ is also a merit function in the sense of weak Pareto optimality.
\begin{theorem} \label{thm: u ell}
    Let~$u_\ell$ be defined by~\cref{eq: u_ell} for some~$\ell > 0$.
    Then, we have~$u_\ell(x) \ge 0$ for all~$x \in S$.
    Moreover,~$x \in S$ is weakly Pareto optimal for~\cref{eq: CMOP} if and only if~$u_\ell(x) = 0$.
\end{theorem}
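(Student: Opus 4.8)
The plan is to mirror the proof of \cref{thm: u}: the nonnegativity and one of the two implications go through verbatim, and the convexity hypothesis is needed only for the remaining implication.

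First, for nonnegativity, I would substitute $y = x$ into the definition \cref{eq: u_ell}: the inner minimand becomes $\min_i \{F_i(x) - F_i(x) - 0\} = 0$, so the maximum over $y \in S$ is at least $0$, giving $u_\ell(x) \ge 0$ for every $x \in S$. This uses nothing beyond $x \in S$. Next, to show that weak Pareto optimality implies $u_\ell(x) = 0$, I would argue by contradiction. Since the minimand in \cref{eq: u_ell} is strongly concave in $y$, the maximum is attained at a unique $\bar{y} \in S$; if $u_\ell(x) > 0$ then $\min_i \{F_i(x) - F_i(\bar{y}) - \frac{\ell}{2}\norm*{x - \bar{y}}^2\} > 0$. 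In particular $\bar{y} \neq x$ (the expression vanishes at $y = x$), hence $\frac{\ell}{2}\norm*{x - \bar{y}}^2 > 0$ and $F_i(x) - F_i(\bar{y}) > \frac{\ell}{2}\norm*{x - \bar{y}}^2 > 0$ for all $i$, i.e.\ $F(\bar{y}) < F(x)$, contradicting weak Pareto optimality of $x$. Together with nonnegativity this yields $u_\ell(x) = 0$. (As with $u_0$, this direction does not use convexity.)

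The substantive part, and the only place convexity enters, is the converse: $u_\ell(x) = 0 \Rightarrow x$ is weakly Pareto optimal. I would prove the contrapositive. Suppose $x$ is not weakly Pareto optimal, so there is $z \in S$ with $F_i(z) < F_i(x)$ for all $i$; note $z \neq x$. For $t \in (0, 1]$ put $y_t \coloneqq x + t(z - x)$, which lies in $S$ by convexity of $S$. Convexity of each $F_i$ gives $F_i(y_t) \le F_i(x) - t\,(F_i(x) - F_i(z))$, hence
\[
    F_i(x) - F_i(y_t) - \frac{\ell}{2}\norm*{x - y_t}^2 \ge t\left[ (F_i(x) - F_i(z)) - \frac{\ell t}{2}\norm*{z - x}^2 \right] \forallcondition{i = 1, \dots, m}.
\]
Because there are only finitely many indices and $\min_i (F_i(x) - F_i(z)) > 0$, choosing $t > 0$ small enough (e.g.\ $t < 2\min_i(F_i(x) - F_i(z)) / (\ell \norm*{z - x}^2)$) makes the bracketed quantity strictly positive for every $i$. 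Then $\min_i \{F_i(x) - F_i(y_t) - \frac{\ell}{2}\norm*{x - y_t}^2\} > 0$, so $u_\ell(x) > 0$, contradicting $u_\ell(x) = 0$.

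The only real obstacle is recognizing that the improving direction $z - x$ must be scaled down by $t$ so that the linear-in-$t$ gain furnished by convexity dominates the quadratic-in-$t$ regularization penalty $\frac{\ell}{2}\norm*{x - y_t}^2 = \frac{\ell t^2}{2}\norm*{z - x}^2$; this is the standard regularized-gap-function trick, and I expect no further technical difficulty. Everything else is a direct adaptation of \cref{thm: u}.
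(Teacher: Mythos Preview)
Your proof is correct and uses essentially the same approach as the paper: the nonnegativity and the implication ``weakly Pareto optimal $\Rightarrow u_\ell(x)=0$'' are immediate, and for the converse both you and the paper rely on the same scaling trick $y = x + t(z-x)$ together with convexity of $F_i$ so that the linear gain $t(F_i(x)-F_i(z))$ dominates the quadratic penalty $\tfrac{\ell t^2}{2}\norm{z-x}^2$. The only cosmetic differences are that the paper argues the converse directly (showing $u_\ell(x)=0 \Rightarrow u_0(x)=0$ and invoking \cref{thm: u}) rather than by contrapositive, and handles the other implication via the trivial inequality $u_\ell \le u_0$ rather than by looking at the maximizer $\bar y$.
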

\begin{proof}
    Let~$x \in S$.
    The definition~\cref{eq: u_ell} of~$u_\ell$ yields
    \[
        \begin{split}
            u_\ell(x) &= \max_{y \in S} \min_{i = 1, \dots, m} \left\{ F_i(x) - F_i(y) - \frac{\ell}{2} \norm*{x - y}^2 \right\} \\
                      &\ge \min_{i = 1, \dots, m} \left\{ F_i(x) - F_i(x) - \frac{\ell}{2} \norm*{x - x}^2 \right\} = 0
        ,\end{split}
    \] 
    which proves the first statement.

    We now prove the second statement.
    First, assume that~$u_\ell(x) = 0$.
    Then, \cref{eq: u_ell} again gives
    \[
        \min_{i = 1, \dots, m} \left\{ F_i(x) - F_i(y) - \frac{\ell}{2} \norm*{x - y}^2 \right\} \le 0 \forallcondition{y \in S}
    .\] 
    Let~$z \in S$ and~$\alpha \in (0, 1)$.
    Since the convexity of~$S$ implies that~$x + \alpha (z - x) \in S$, by substituting~$y = x + \alpha (z - x)$ into the above inequality, we get
    \[
        \min_{i = 1, \dots, m} \left\{ F_i(x) - F_i(x + \alpha(z - x)) - \frac{\ell}{2} \norm*{\alpha(z - x)}^2 \right\} \le 0
    .\] 
    The convexity of~$F_i$ leads to
    \[
        \min_{i = 1, \dots, m} \left\{ \alpha (F_i(x) - F_i(z)) - \frac{\ell}{2} \norm*{\alpha (z - x)}^2 \right\} \le 0
    .\] 
    Dividing both sides by~$\alpha$ and letting~$\alpha \searrow 0$, we have
    \[
        \min_{i = 1, \dots, m} \left\{ F_i(x) - F_i(z) \right\} \le 0
    .\] 
    Since~$z$ can take an arbitrary point in~$S$, it follows from~\cref{eq: u_0} that~$u_0(x) = 0$.
    Therefore, from \cref{thm: u},~$x$ is weakly Pareto optimal.

    Now, suppose that~$x$ is weakly Pareto optimal.
    Then, it follows again from \cref{thm: u} that~$u_0(x) = 0$.
    It is clear that~$u_\ell(x) \le u_0(x)$ from the definitions~\cref{eq: u_0,eq: u_ell} of~$u_0$ and~$u_\ell$.
    So, we get~$u_\ell(x) = 0$.
\end{proof}
Then, we can also show the continuity of~$u_\ell$ and~$U_\ell$ without any particular assumption.
\begin{theorem} \label{thm: u_ell continuity}
    For each~$\ell > 0$,~$u_\ell$ and~$U_\ell$ defined by~\cref{eq: u_ell,eq: U_ell} are locally Lipschitz continuous and locally H\"older continuous with exponent~$1 / 2$ on~$S$, respectively.
\end{theorem}
\begin{proof}
    The optimality condition of the maximization problem associated with~\cref{eq: u_ell,eq: U_ell} and~\cite[Proposition~A.22]{Bertsekas1971} give
    \[
        \ell \left( x - U_\ell(x) \right) \in \conv_{i \in \mathcal{I}(x)} \partial F_i(U_\ell(x)) + N_S(U_\ell(x)) \forallcondition{x \in S}
    ,\]
    where~$N_S$ denotes the normal cone to the convex set~$S$ defined by~\cref{eq: normal cone} and
    \[
        \mathcal{I}(x) = \argmin_{i = 1, \dots, m} \left\{ F_i(x) - F_i(U_\ell(x)) \right\}
    .\]
    Considering the definitions~\cref{eq: normal cone,eq: subdifferential} of the normal cone and subdifferential, for each~$x \in S$ there exists~$\lambda(x) \in \Delta^m$, where~$\Delta^m$ is the standard simplex given by~\cref{eq: simplex}, such that~$\lambda_j(x) = 0$ for all~$j \notin \mathcal{I}(x)$ and
    \[
        \ell (x - U_\ell(x))^\T (z - U_\ell(x)) \le \sum_{i = 1}^{m} \lambda_i(x) \left\{ F_i(z) - F_i(U_\ell(x)) \right\} \forallcondition{z \in S}
    .\] 
    For any bounded set~$\Omega \subseteq S$, let~$x^1, x^2 \in \Omega$.
    Adding the two inequalities obtained by substituting~$(x, z) = (x^1, U_\ell(x^2))$ and~$(x, z) = (x^2, U_\ell(x^1))$ into the above inequality, we get
    \begin{align*}
        \MoveEqLeft \ell \left(U_\ell(x^1) - U_\ell(x^2) - (x^1 - x^2)\right)^\T (U_\ell(x^1) - U_\ell(x^2)) \\
        \le{}& \sum_{i = 1}^{m} (\lambda_i(x^2) - \lambda_i(x^1)) \left\{ F_i(U_\ell(x^1)) - F_i(U_\ell(x^2)) \right\} \\
        ={}& \sum_{i = 1}^{m} \lambda_i(x^1) \left\{ F_i(x^1) - F_i(U_\ell(x^1)) \right\} + \sum_{i = 1}^{m} \lambda_i(x^2) \left\{ F_i(x^2) - F_i(U_\ell(x^2)) \right\} \\
           &+ \sum_{i = 1}^{m} \lambda_i(x^1) \left\{ F_i(U_\ell(x^2)) - F_i(x^1) \right\} + \sum_{i = 1}^{m} \lambda_i(x^2) \left\{ F_i(U_\ell(x^1)) - F_i(x^2) \right\} \\
        ={}& \min_{i = 1, \dots, m} \left\{ F_i(x^1) - F_i(U_\ell(x^1)) \right\} + \min_{i = 1, \dots, m} \left\{ F_i(x^2) - F_i(U_\ell(x^2)) \right\} \\
           &+ \sum_{i = 1}^{m} \lambda_i(x^1) \left\{ F_i(U_\ell(x^2)) - F_i(x^1) \right\} + \sum_{i = 1}^{m} \lambda_i(x^2) \left\{ F_i(U_\ell(x^1)) - F_i(x^2) \right\} \\
        \le{}& \sum_{i = 1}^{m} \lambda_i(x^2) \left\{ F_i(x^1) - F_i(U_\ell(x^1)) \right\} + \sum_{i = 1}^{m} \lambda_i(x^1) \left\{ F_i(x^2) - F_i(U_\ell(x^2)) \right\} \\
           &+ \sum_{i = 1}^{m} \lambda_i(x^1) \left\{ F_i(U_\ell(x^2)) - F_i(x^1) \right\} + \sum_{i = 1}^{m} \lambda_i(x^2) \left\{ F_i(U_\ell(x^1)) - F_i(x^2) \right\} \\
        ={}& \sum_{i = 1}^{m} \left( \lambda_i(x^2) - \lambda_i(x^1) \right) \left\{ F_i(x^1) - F_i(x^2) \right\} 
        \le 2 \max_{i = 1, \dots, m} \abs*{F_i(x^1) - F_i(x^2)}
    ,\end{align*}
    where the second equality holds from the definition of~$\mathcal{I}(x)$ and since~$\lambda(x) \in \Delta^m$ and~$\lambda_j(x) \neq 0$ for all~$j \in \mathcal{I}(x)$.
    Dividing by~$\ell$ and adding~$(1 / 4) \norm*{x^1 - x^2}^2$ in both sides of the inequality, we have
    \[
        \norm*{U_\ell(x^1) - U_\ell(x^2) - \frac{1}{2} \left( x^1 - x^2 \right) }^2 \le \frac{1}{4} \norm*{x^1 - x^2}^2 + \frac{2}{\ell} \max_{i = 1, \dots, m} \abs*{F_i(x^1) - F_i(x^2)}
    .\] 
    Taking the square root of both sides, we obtain
    \[
        \norm*{U_\ell(x^1) - U_\ell(x^2) - \frac{1}{2} \left( x^1 - x^2 \right) } \le \sqrt{\frac{1}{4} \norm*{x^1 - x^2}^2 + \frac{2}{\ell} \max_{i = 1, \dots, m} \abs*{F_i(x^1) - F_i(x^2)}}
    .\] 
    Then, it follows from the triangle inequality that
    \[
        \norm*{U_\ell(x^1) - U_\ell(x^2)} \le \frac{1}{2} \norm*{x^1 - x^2} + \sqrt{\frac{1}{4} \norm*{x^1 - x^2}^2 + \frac{2}{\ell} \max_{i = 1, \dots, m} \abs*{F_i(x^1) - F_i(x^2)}}
    .\] 
    Since~\cref{thm:local_Lipschitz} implies that~$F_i$ is locally Lipschitz continuous on~$S$, there exists~$L_i(\Omega) > 0$ such that
    \begin{equation} \label{eq: u_ell continuity: local_Lipschitz}
        \abs*{F_i(x^1) - F_i(x^2)} \le L_i(\Omega) \norm*{x^1 - x^2}
    \end{equation}
    Hence, the above two inequlities show~$U_\ell$'s local H\"older continuity with exponent~$1 / 2$.

    On the other hand, the definition~\cref{eq: u_ell} of~$u_\ell$ gives
    \begin{align*}
        u_\ell(x^1) & = \max_{y \in C} \min_{i = 1, \dots, m} \left[ F_i(x^1) - F_i(y) - \frac{\ell}{2} \norm*{x^1 - y}^2 \right] \\
                                 & \ge \min_{i = 1, \dots, m} \left[F_i(x^1) - F_i(U_\ell(x^2))\right] - \frac{\ell}{2} \norm*{x^1 - U_\ell(x^2)}^2
        .\end{align*}
    Reducing~$u_\ell(x^2)$ from both sides yields
    \[
        u_\ell(x^1) - u_\ell(x^2) \ge \min_{i = 1, \dots, m} \left[F_i(x^1) - F_i\left(U_\ell(x^2)\right) - \frac{\ell}{2} \norm*{x^1 - U_\ell(x^2)}^2\right] - u_\ell(x^2)
    .\]
    \cref{eq: u_ell,eq: U_ell} lead to
    \begin{align*}
        u_\ell(x^1) - u_\ell(x^2) \ge{} & \min_{i = 1, \dots, m} \left[F_i(x^1) - F_i\left(U_\ell(x^2)\right) - \frac{\ell}{2} \norm*{x^1 - U_\ell(x^2)}^2\right]    \\
                                                                  & - \min_{i = 1, \dots, m} \left[F_i(x^1) - F_i\left(U_\ell(x^2)\right) - \frac{\ell}{2} \norm*{x^2 - U_\ell(x^2)}^2 \right]
    .\end{align*}
    As it follows that that~$\min_{i = 1, \dots, m} v^1_i - \min_{i = 1, \dots, m} v^2_i \ge \min_{i = 1, \dots, m} (v^1_i - v^2_i)$ for any~$v^1, v^2 \in \setR^m$, we obtain
    \[
        u_\ell(x^1) - u_\ell(x^2) \ge \min_{i = 1, \dots, m} \left[ F_i(x^1) - F_i(x^2) - \frac{\ell}{2} \left( x^1 + x^2 - 2 U_\ell(x^2) \right)^\T \left( x^1 - x^2 \right) \right]
    .\]
    Cauchy-Schwarz inequality and~\cref{eq: u_ell continuity: local_Lipschitz} implies
    \[
        u_\ell(x^1) - u_\ell(x^2) \ge - \left[ \max_{i = 1, \dots, m} L_i(\Omega) + \frac{\ell}{2} \norm*{x^1 + x^2 - 2 U_\ell(x^2)}\right] \norm*{x^1 - x^2}
    .\]
    Since~$U_\ell(x)$ is bounded for~$x \in \Omega$ due to the continuity, and the above inequality holds even if we interchange~$x^1$ and~$x^2$, we can show the local Lipschitz continuity of~$u_\ell$.
\end{proof}

On the other hand, since~$\min_{i = 1, \dots, m} q_i = \min_{\lambda \in \Delta^m} \sum_{i = 1}^m q_i$ for any~$q \in \setR^m$ with the standard simplex~$\Delta^m$ defined by~\cref{eq: simplex}, $u_\ell$ given by~\cref{eq: u_ell} can also be expressed as
\[
    u_\ell(x) = \max_{y \in S} \min_{\lambda \in \Delta^m} \sum_{i = 1}^{m} \lambda_i \left\{ F_i(x) - F_i(y) - \frac{\ell}{2} \norm*{x - y}^2 \right\} 
.\] 
We can see that~$S$ is convex,~$\Delta^m$ is compact and convex, and the function inside~$\min_{\lambda \in \Delta^m}$ is convex for~$\lambda$ and concave for~$y$.
Therefore, Sion's minimax theorem~\cite{Sion1958} leads to
\begin{equation} \label{eq: u dual}
    \begin{split}
        u_\ell(x) &= \min_{\lambda \in \Delta^m} \max_{y \in S} \sum_{i = 1}^{m} \lambda_i \left\{ F_i(x) - F_i(y) - \frac{\ell}{2} \norm*{x - y}^2 \right\} \\
                  &= \min_{\lambda \in \Delta^m} \left\{ \sum_{i = 1}^{m} \lambda_i F_i(x) - \ell \envelope_{ \frac{1}{\ell} \sum_{i = 1}^{m} \lambda_i F_i + \indicator_S } (x) \right\}
    ,\end{split}
\end{equation}
where~$\envelope$ and~$\indicator$ denote the Moreau envelope and the indicator function defined by~\cref{eq: Moreau envelope,eq: indicator function}, respectively.
Thus, for each~$\ell > 0$, we can evaluate~$u_\ell$ through the following~$m$-dimensional, simplex-constrained, convex optimization problem:
\begin{equation} \label{eq: dual of u_ell}
    \begin{aligned}
        \min_{\lambda \in \setR^m} &&& \sum_{i = 1}^{m} \lambda_i F_i(x) - \ell \envelope_{\frac{1}{\ell} \sum_{i = 1}^{m} \lambda_i F_i + \indicator_S}(x) \\
        \st &&& \lambda \ge 0 \eqand \sum_{i = 1}^{m} \lambda_i = 1 
    .\end{aligned}
\end{equation} 
As the following theorem shows, the objective function of~\cref{eq: dual of u_ell} is continuously differentiable.
\begin{theorem} \label{thm: differentiability of the objective function of u_ell}
    Let~$x \in S$ be given.
    The objective function of~\cref{eq: dual of u_ell} is continuously differentiable at every~$\lambda \in \setR^m$ and
    \[
        \nabla_\lambda \left[ \sum_{i = 1}^{m} \lambda_i F_i(x) - \ell \envelope_{\frac{1}{\ell} \sum_{i = 1}^{m} \lambda_i F_i + \indicator_S}(x) \right] = F(x) - F\left( \prox_{\frac{1}{\ell} \sum_{i = 1}^{m} \lambda_i F_i + \indicator_S}(x) \right)
    ,\] 
    where~$\prox$ denotes the proximal operator~\cref{eq: proximal operator}.
\end{theorem}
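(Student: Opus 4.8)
I would first restrict attention to $\lambda \ge 0$ (which contains the feasible set $\Delta^m$ of \cref{eq: dual of u_ell}), so that $\tfrac1\ell\sum_{i=1}^{m}\lambda_i F_i + \indicator_S$ is closed, proper, and convex and the Moreau envelope below is finite. The plan is then to split the objective
\[
    g(\lambda) \coloneqq \sum_{i=1}^{m} \lambda_i F_i(x) - \ell\, \envelope_{\frac1\ell \sum_{i=1}^{m} \lambda_i F_i + \indicator_S}(x)
\]
into its two summands. The map $\lambda \mapsto \sum_{i=1}^{m} \lambda_i F_i(x) = F(x)^\T \lambda$ is linear with constant gradient $F(x)$, so everything reduces to differentiating $\psi(\lambda) \coloneqq \ell\, \envelope_{\frac1\ell\sum_{i=1}^{m} \lambda_i F_i + \indicator_S}(x)$ in $\lambda$. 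Unfolding \cref{eq: Moreau envelope,eq: indicator function} and rescaling the inner minimization by $\ell > 0$,
\[
    \psi(\lambda) = \min_{y \in S} \Big\{ \sum_{i=1}^{m} \lambda_i F_i(y) + \frac\ell2 \norm*{x - y}^2 \Big\} ,
\]
and since the bracketed function is $\ell$-strongly convex in $y$ (convex plus $\tfrac\ell2\norm*{x-y}^2$), its minimum is attained at the unique point $\prox_{\frac1\ell\sum_i\lambda_i F_i+\indicator_S}(x)$. Hence $\psi$ is the optimal value function \cref{eq: optimal value function} of the parameterized problem \cref{eq: parameterized optimization problem} with variable $y \in X \coloneqq S$, parameter $\xi \coloneqq \lambda$, and $h(y,\lambda) \coloneqq \sum_{i=1}^{m} \lambda_i F_i(y) + \tfrac\ell2\norm*{x-y}^2$, while its solution set \cref{eq: optimal solution mapping} is the singleton $\Phi(\lambda) = \{\prox_{\frac1\ell\sum_i\lambda_i F_i+\indicator_S}(x)\}$.

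I would then apply \cref{prop: directional differentiability} at an arbitrary $\lambda^0$. Hypothesis~(i) holds because each $F_i$ is continuous, and hypothesis~(iii) is immediate since $\lambda \mapsto h(y,\lambda)$ is affine with $h_y^\prime(\lambda^0; d) = F(y)^\T d$. For hypothesis~(ii) the quadratic term does the work: for $\lambda$ in a small ball around $\lambda^0$ one has $h(y,\lambda) \ge \tfrac\ell2\norm*{x-y}^2 - c(1+\norm*{y})$ with $c$ independent of $\lambda$ (bound each $F_i$ from below by an affine minorant), so $h(\cdot,\lambda)$ is coercive uniformly in $\lambda$ and all its sublevel sets lie in one fixed compact set $C \subseteq S$. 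Finally, hypothesis~(iv) holds because, $h$ being affine in $\lambda$, the difference quotient $\big(h(y^k, \lambda^0 + t_k d) - h(y^k,\lambda^0)\big)/t_k$ equals $F(y^k)^\T d$ for every $k$, so along a subsequence $y^{k_j}\to\bar y\in C$ it tends to $F(\bar y)^\T d = h_{\bar y}^\prime(\lambda^0;d)$, and choosing the subsequence to realize the $\limsup$ gives the required inequality. Then \cref{prop: directional differentiability}, together with $\Phi(\lambda^0)$ being a singleton, yields
\[
    \psi^\prime(\lambda^0; d) = \inf_{y \in \Phi(\lambda^0)} F(y)^\T d = F\big(\prox_{\frac1\ell\sum_i\lambda_i^0 F_i+\indicator_S}(x)\big)^\T d \qquad \text{for all } d \in \setR^m .
\]
Since this directional derivative is linear in $d$, it is a genuine (G\^{a}teaux) derivative, so $\nabla\psi(\lambda^0) = F(\prox_{\frac1\ell\sum_i\lambda_i^0 F_i+\indicator_S}(x))$ and $\nabla g(\lambda^0) = F(x) - \nabla\psi(\lambda^0)$, the asserted formula.

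It remains to promote G\^{a}teaux to continuous differentiability, for which it suffices that $\lambda \mapsto \prox_{\frac1\ell\sum_i\lambda_i F_i + \indicator_S}(x)$ be continuous: then $\nabla\psi$ is its composition with the continuous $F$, and a continuous G\^{a}teaux derivative is a Fr\'{e}chet derivative. This continuity I would obtain by the strong-convexity device from the proof of \cref{thm: u_ell continuity}: writing $y^j$ for the minimizer at $\lambda^j$, adding $h(y^2,\lambda^1) \ge h(y^1,\lambda^1) + \tfrac\ell2\norm*{y^1-y^2}^2$ and $h(y^1,\lambda^2) \ge h(y^2,\lambda^2) + \tfrac\ell2\norm*{y^1-y^2}^2$ gives $\ell\norm*{y^1-y^2}^2 \le (\lambda^1-\lambda^2)^\T(F(y^2)-F(y^1))$; as the minimizers remain in a bounded set while $\lambda^2\to\lambda^1$ (again by the coercivity above), the right-hand side vanishes in the limit and so does $\norm*{y^1-y^2}$. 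I expect the main obstacle to be verifying hypothesis~(ii) of \cref{prop: directional differentiability} — the uniform local level-boundedness — which is precisely where the domination of the $\lambda$-linear terms by $\tfrac\ell2\norm*{x-y}^2$ is used, and which also provides the bounded set needed in the continuity step; everything else is routine.
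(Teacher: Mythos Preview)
Your approach is essentially the paper's: both cast the Moreau-envelope term as the optimal value of a parametric minimization in~$y$, apply \cref{prop: directional differentiability}, and use uniqueness of the proximal point to pass from a directional derivative to a gradient. The paper's version is terser --- it invokes \cite[Exercise~7.38]{Rockafellar1998} for the continuity of $\lambda \mapsto \prox_{\frac{1}{\ell}\sum_i \lambda_i F_i + \indicator_S}(x)$ and leaves hypotheses~(ii) and~(iv) of \cref{prop: directional differentiability} implicit --- whereas you supply those checks explicitly via coercivity and strong convexity.
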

\begin{proof}
    Define
    \[
        h(y, \lambda) \coloneqq \sum_{i = 1}^{m} \lambda_i F_i(y) + \frac{1}{2} \norm*{x - y}^2
    .\] 
    We now check that~$\envelope_{\frac{1}{\ell} \sum_{i = 1}^{m} \lambda_i F_i + \indicator_S}(x) = \min_{y \in S} h(y, \lambda)$ satisfies the assumptions~\ref{prop: directional differentiability: continuity}--\ref{prop: directional differentiability: limit} of~\cref{prop: directional differentiability}.
    First, since~$F$ is finite-valued and convex,~$h$ is clearly continuous on~$S \times \Delta^m$~(Assumption~\ref{prop: directional differentiability: continuity}).
    Moreover,~$h_y(\cdot) \coloneqq h(y, \cdot)$ is continuously differentiable and
    \[
        \nabla_\lambda h_y(\lambda) = F(y)
    \] 
    for any~$y \in S$ (Assumption~\ref{prop: directional differentiability: directional differentiability}).
    Furthermore,~$\prox_{\frac{1}{\ell} \sum_{i = 1}^{m} \lambda_i F_i + \indicator_S}(x) = \argmin_{y \in S} h(y, \lambda)$ is also continuous at every~$\lambda \in \setR^m$ from~\cite[Excercise 7.38]{Rockafellar1998} (Assumptions~\ref{prop: directional differentiability: level set} and~\ref{prop: directional differentiability: limit} with~$C = \set{\prox_{\frac{1}{\ell} \sum_{i = 1}^{m} \lambda_i F_i + \indicator_S}(x)}$).
    Therefore, all assumptions of \cref{prop: directional differentiability} are satisfied.
    Since~$\prox_{\frac{1}{\ell} \sum_{i = 1}^{m} \lambda_i F_i + \indicator_S}(x)$ is unique, we obtain the desired result.
\end{proof}
Therefore, when~$\prox_{\frac{1}{\ell} \sum_{i = 1}^{m} \lambda_i F_i + \indicator_S}(x)$ is easy to compute, we can solve~\cref{eq: dual of u_ell} using well-known convex optimization techniques such as the interior point method~\cite{Bertsekas1999}.
If~$n \gg m$, this is usually faster than solving the~$n$-dimensional problem directly to compute~\cref{eq: u_ell}.

Let us now write the optimal solution set of~\cref{eq: dual of u_ell} by
\begin{equation} \label{eq: Lambda}
    \Lambda(x) \coloneqq \argmin_{\lambda \in \Delta^m} \left\{ \sum_{i = 1}^{m} \lambda_i F_i(x) - \ell \envelope_{ \frac{1}{\ell} \sum_{i = 1}^{m} \lambda_i F_i + \indicator_S } (x) \right\}
.\end{equation} 
Then, we can show the directional differentiability of~$u_\ell$, as in the following theorem.
\begin{theorem} \label{thm: directional differentiability of u_ell}
    Let~$x \in S$.
    For each~$\ell > 0$, the merit function~$u_\ell$ defined by~\cref{eq: u_ell} has a directional derivative
    \begin{multline*}
        u_\ell^\prime(x; z - x) \\= \inf_{\lambda \in \Lambda(x)} \left\{ \sum_{i = 1}^{m} \lambda_i F_i^\prime(x; z - x) - \ell \left(x - \prox_{\frac{1}{\ell} \sum_{i = 1}^{m} \lambda_i F_i + \indicator_S}(x)\right)^\T (z - x) \right\} 
    \end{multline*}
    for all~$z \in S$, where~$\Lambda(x)$ is given by~\cref{eq: Lambda}, and~$\prox$ denotes the proximal operator~\cref{eq: proximal operator}.
    In particular, if~$\Lambda(x)$ is a singleton, i.e.,~$\Lambda(x) = \set*{\lambda(x)}$, and~$F_i$ is continuously differentiable at~$x$, then~$u_\ell$ is continuously differentiable at~$x$, and we have
    \[
        \nabla u_\ell(x) = \sum_{i = 1}^{m} \lambda_i(x) \nabla F_i(x) - \ell \left( x - \prox_{\frac{1}{\ell} \sum_{i = 1}^{m} \lambda_i(x) F_i + \indicator_S} \right) 
    .\] 
\end{theorem}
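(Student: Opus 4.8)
The plan is to represent $u_\ell$ as the optimal value function of the compactly-constrained convex program \cref{eq: dual of u_ell} and then invoke \cref{prop: directional differentiability}. By the minimax identity \cref{eq: u dual}, $u_\ell(x) = \min_{\lambda \in \Delta^m} h(\lambda, x)$ with
\[
    h(\lambda, x) \coloneqq \sum_{i = 1}^{m} \lambda_i F_i(x) - \ell \, \envelope_{\frac{1}{\ell} \sum_{i = 1}^{m} \lambda_i F_i + \indicator_S}(x)
,\]
so that, in the notation of \cref{prop: directional differentiability}, the optimization variable is $\lambda \in X \coloneqq \Delta^m$, the parameter is $x \in \Xi \coloneqq S$, the optimal value function is $\phi = u_\ell$, and the solution mapping is $\Phi = \Lambda$ from \cref{eq: Lambda}. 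I fix $x \in S$ and $z \in S$ and apply the proposition at $\xi^0 = x$ with $\xi = z$, so that $\xi - \xi^0 = z - x$ and $x + t(z - x) \in S$ for $t \in [0, 1]$ by convexity of $S$.

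Next I would verify the four hypotheses of \cref{prop: directional differentiability}. For (i), joint continuity of $h$ on $\Delta^m \times S$ follows from continuity of the $F_i$ together with continuity of $(\lambda, x) \mapsto \prox_{\frac{1}{\ell}\sum_i \lambda_i F_i + \indicator_S}(x) = \argmin_{y \in S}\{\sum_i \lambda_i F_i(y) + \frac{\ell}{2}\norm*{x - y}^2\}$ and of its optimal value $\ell \envelope_{\frac{1}{\ell}\sum_i \lambda_i F_i + \indicator_S}(x)$, exactly as in the proof of \cref{thm: differentiability of the objective function of u_ell} via \cite[Exercise~7.38]{Rockafellar1998}. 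For (ii), since $X = \Delta^m$ is already compact, take $C \coloneqq \Delta^m$; nonemptiness of $\level_\alpha h(\cdot, \hat{x})$ for $\hat{x}$ near $x$ and a common $\alpha$ holds because $h(\cdot, \hat{x})$ attains its minimum $u_\ell(\hat{x})$ on $\Delta^m$ and $u_\ell$ is continuous (\cref{thm: u_ell continuity}). For (iii), fix $\lambda \in \Delta^m$; the convex map $x \mapsto \sum_i \lambda_i F_i(x)$ has the directional derivative $\sum_i \lambda_i F_i^\prime(x; z - x)$, while $x \mapsto \ell \envelope_{\frac{1}{\ell}\sum_i \lambda_i F_i + \indicator_S}(x)$ is differentiable by \cref{thm: smoothness of Moreau envelope} with gradient $\ell(x - \prox_{\frac{1}{\ell}\sum_i \lambda_i F_i + \indicator_S}(x))$, hence
\[
    h_\lambda^\prime(x; z - x) = \sum_{i = 1}^{m} \lambda_i F_i^\prime(x; z - x) - \ell \left( x - \prox_{\frac{1}{\ell}\sum_{i = 1}^{m} \lambda_i F_i + \indicator_S}(x) \right)^\T (z - x)
.\]

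Hypothesis (iv) is the only delicate point and the step I expect to be the main obstacle. Given $t_k \searrow 0$ and $\set*{\lambda^k} \subseteq \Delta^m$, I pass to a subsequence with $\lambda^k \to \bar{\lambda} \in \Delta^m$ and split the difference quotient of $h$ along $x + t_k(z - x)$ into its two summands. The first converges to $\sum_i \bar{\lambda}_i F_i^\prime(x; z - x)$ because, for convex $F_i$, the quotients $\tfrac{F_i(x + t_k(z - x)) - F_i(x)}{t_k}$ decrease to $F_i^\prime(x; z - x)$ and $\lambda^k \to \bar{\lambda}$. For the second, I write the envelope increment as $\int_0^1 \nabla\envelope_{\frac{1}{\ell}\sum_i \lambda^k_i F_i + \indicator_S}(x + s t_k(z - x))^\T (z - x)\, ds$ and pass to the limit using the uniform $1$-Lipschitz continuity of $\nabla\envelope$ (\cref{thm: smoothness of Moreau envelope}) together with the continuity of the proximal operator in $(\lambda, x)$ from step (i), obtaining convergence to $\ell(x - \prox_{\frac{1}{\ell}\sum_i \bar{\lambda}_i F_i + \indicator_S}(x))^\T (z - x)$. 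Thus the whole difference quotient converges to $h_{\bar{\lambda}}^\prime(x; z - x)$, which in particular gives the $\limsup$ inequality required. The care in interchanging limits inside the Moreau-envelope term as $\lambda^k$ varies is exactly where the uniform Lipschitz bound and the joint continuity of the proximal operator are essential.

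With all hypotheses verified, \cref{prop: directional differentiability} gives $u_\ell^\prime(x; z - x) = \inf_{\lambda \in \Lambda(x)} h_\lambda^\prime(x; z - x)$, which is the claimed formula. If $\Lambda(x) = \set*{\lambda(x)}$, the infimum is over one point, so $u_\ell^\prime(x; z - x) = \sum_i \lambda_i(x) F_i^\prime(x; z - x) - \ell(x - \prox_{\frac{1}{\ell}\sum_i \lambda_i(x) F_i + \indicator_S}(x))^\T (z - x)$; if moreover each $F_i$ is continuously differentiable at $x$, then $F_i^\prime(x; z - x) = \nabla F_i(x)^\T (z - x)$ and the right-hand side is linear in $z - x$, so $u_\ell$ is differentiable at $x$ with $\nabla u_\ell(x) = \sum_i \lambda_i(x) \nabla F_i(x) - \ell(x - \prox_{\frac{1}{\ell}\sum_i \lambda_i(x) F_i + \indicator_S}(x))$. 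Continuity of this gradient at $x$ — hence continuous differentiability — follows by combining continuity of $\nabla F_i$, continuity of the proximal operator in $(\lambda, x)$, and the fact that the argmin mapping $\Lambda$ of \cref{eq: Lambda} is outer semicontinuous and therefore continuous at $x$ wherever it is single-valued.
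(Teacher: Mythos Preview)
Your proposal is correct and follows essentially the same route as the paper: both represent $u_\ell$ via the dual \cref{eq: u dual} and apply \cref{prop: directional differentiability} with $X=\Delta^m$, verifying joint continuity of $h$, compactness of $\Delta^m$, and the directional derivative formula for $h_\lambda$ via \cref{thm: smoothness of Moreau envelope}. The only difference is stylistic: for hypothesis~(iv) the paper simply observes that $h_\lambda^\prime(x;z-x)$ is jointly continuous in $(x,z,\lambda)$ (citing \cite[Exercise~7.38]{Rockafellar1998} for the proximal operator) and invokes compactness of $\Delta^m$, whereas you unpack this by splitting into the convex and Moreau-envelope summands and passing to the limit explicitly---your argument is more detailed but not genuinely different.
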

\begin{proof}
    Let
    \[
        h(x, \lambda) \coloneqq \sum_{i = 1}^{m} \lambda_i F_i(x) - \ell \envelope_{ \frac{1}{\ell} \sum_{i = 1}^{m} \lambda_i F_i + \indicator_S } (x)
    .\]
    Since~$\envelope_{ \frac{1}{\ell} \sum_{i = 1}^{m} \lambda_i F_i + \indicator_S} (x)$ is continuous at every~$(x, \lambda) \in S \times \Delta^m$ from~\cite[Theorem~7.37]{Rockafellar1998},~$h$ is also continuous on~$S \times \Delta^m$.
    Moreover, \cref{thm: smoothness of Moreau envelope} implies that for all~$x, z \in S$ the function~$h_\lambda(\cdot) \coloneqq h(\cdot, \lambda)$ has a directional derivative:
    \[
        h_\lambda^\prime(x; z - x) = \sum_{i = 1}^{m} \lambda_i F_i^\prime(x; z - x) - \ell \left(x - \prox_{\frac{1}{\ell} \sum_{i = 1}^{m} \lambda_i F_i + \indicator_S}(x)\right)^\T (z - x) 
    .\]
    As~$\prox_{ \frac{1}{\ell} \sum_{i = 1}^{m} \lambda_i F_i + \indicator_S } (x)$ is continuous at every~$(x, \lambda) \in S \times \Delta^m$ (cf.~\cite[Exercise~7.38]{Rockafellar1998}),~$h_\lambda^\prime(x; z - x)$ is also continuous at every~$(x, z, \lambda) \in S \times S \times \Delta^m$.
    The discussion above and the compactness of~$\Delta^m$ show that all assumptions of \cref{prop: directional differentiability} are satisfied.
    So, we get the desired result.
\end{proof}

From \cref{thm: u ell,thm: directional differentiability of u_ell}, the weakly Pareto optimal solutions for~\cref{eq: CMOP} are the global optimal solutions of the following directionally differentiable single-objective optimization problem:
\begin{equation} \label{eq: min u_ell}
    \min_{x \in S} \quad u_\ell(x)
.\end{equation} 
Since~$u_\ell$ is generally non-convex,~\cref{eq: min u_ell} may have local optimal solutions or stationary points that are not globally optimal.
As the following example shows, such stationary points are not necessarily Pareto stationary for~\cref{eq: CMOP}.
\begin{example} \label{exm: min u_ell}
    Let~$m = 1, \ell = 1, S = \setR$ and~$F_1(x) = \abs{x}$.
    Then, we have
    \[
        \envelope_{F_1}(x) = \begin{dcases}
            x^2 / 2, & \text{if } \abs{x} < 1, \\
            \abs{x} - 1 / 2, & \otherwise
        .\end{dcases}
    \]
    Hence, we can evaluate~$u_1$ as follows:
    \[
        u_1(x) = \begin{dcases}
            \abs{x} - x^2 / 2, & \text{if } \abs{x} < 1, \\
            1 / 2, & \otherwise
        .\end{dcases}
    \] 
    It is stationary for~\cref{eq: min u_ell} at~$\abs{x} \ge 1$ and~$x = 0$ but minimal only at~$x = 0$.
    Furthermore, the stationary point of~$F_1$ is only~$x = 0$.
\end{example}
However, if we assume the strict convexity of each~$F_i$, then the stationary point of~\cref{eq: min u_ell} is Pareto optimal for~\cref{eq: CMOP} and hence global optimal for~\cref{eq: min u_ell}.
Note that this assumption does not assert the convexity of~$u_\ell$.
\begin{theorem} \label{thm: stationary of u_ell implies weak Pareto optimality}
    Suppose that~$F_i$ is strictly convex for all~$i \in \set*{1, \dots, m}$.
    If~$x \in S$ is a stationary point of~\cref{eq: min u_ell}, i.e.,
    \[
        u_\ell^\prime(x; z - x) \ge 0 \forallcondition{z \in S}
    ,\]
    then~$x$ is Pareto optimal for~\cref{eq: CMOP}.
\end{theorem}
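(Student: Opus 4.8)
The plan is to show that stationarity of $u_\ell$ at $x$ already forces $u_\ell(x)=0$ — i.e.\ $x$ is a global minimizer of $u_\ell$ over $S$ — and then to read off Pareto optimality: by \cref{thm: u ell} the point $x$ is weakly Pareto optimal, and by \cref{lem: Pareto relationship} weak Pareto optimality together with the strict convexity of every $F_i$ yields Pareto stationarity and then Pareto optimality. So the whole task reduces to the implication ``$x$ stationary for $u_\ell$ $\Rightarrow$ $u_\ell(x)=0$''.

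For that core step I would fix an arbitrary $\lambda\in\Lambda(x)$ (the set $\Lambda(x)$ from \eqref{eq: Lambda} is nonempty because, by \cref{thm: differentiability of the objective function of u_ell}, the objective of \eqref{eq: dual of u_ell} is continuous on the compact simplex $\Delta^m$), put $g\coloneqq\sum_{i=1}^{m}\lambda_i F_i$, and set $y\coloneqq\prox_{\frac{1}{\ell}g+\indicator_S}(x)=\argmin_{y'\in S}\bigl\{g(y')+\tfrac{\ell}{2}\norm*{x-y'}^2\bigr\}$. Note that $\lambda\in\Delta^m$ forces $\lambda_i>0$ for at least one $i$, so $g$ is strictly convex. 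Evaluating the dual formula \eqref{eq: u dual} at $\lambda\in\Lambda(x)$ gives $u_\ell(x)=g(x)-g(y)-\tfrac{\ell}{2}\norm*{x-y}^2$; hence it suffices to prove $y=x$.

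I would obtain $y=x$ by combining two first-order inequalities and then invoking strict convexity. First, the optimality condition for $y$ in its (convex) defining problem over $S$, read in the feasible direction $x-y$, gives $g'(y;x-y)\ge\ell\norm*{x-y}^2$. Second, the assumed stationarity $u_\ell'(x;z-x)\ge0$ for all $z\in S$, combined with the directional-derivative formula of \cref{thm: directional differentiability of u_ell}: since the infimum over $\Lambda(x)$ is nonnegative, our fixed $\lambda$ in particular satisfies $\sum_{i=1}^m\lambda_i F_i'(x;z-x)\ge\ell(x-y)^{\T}(z-x)$ for all $z\in S$, and taking $z=y$ together with $\sum_i\lambda_i F_i'(x;y-x)=g'(x;y-x)$ yields $g'(x;y-x)\ge-\ell\norm*{x-y}^2$. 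Adding the two inequalities, the quadratic terms cancel and we get $g'(x;y-x)+g'(y;x-y)\ge0$. On the other hand, strict convexity of $g$ gives the strengthened subgradient inequalities $g'(a;b-a)<g(b)-g(a)$ for $a\ne b$ (compare $g$ at the midpoint of $a$ and $b$ with $\tfrac12 g(a)+\tfrac12 g(b)$ and use positive homogeneity of $v\mapsto g'(a;v)$), whence $g'(x;y-x)+g'(y;x-y)<0$ unless $x=y$. Therefore $x=y$, so $u_\ell(x)=0$, and the reduction above finishes the proof.

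The step I expect to be the main obstacle is avoiding the naive route: plugging $z=U_\ell(x)$ directly into the closed form of $u_\ell'$ only bounds $u_\ell'(x;U_\ell(x)-x)$ above by an expression of the type $-u_\ell(x)+\tfrac{\ell}{2}\norm*{x-U_\ell(x)}^2$, whose sign is indeterminate, so it does not contradict stationarity by itself (and indeed \cref{exm: min u_ell} shows something must break without strict convexity). The fix is exactly the symmetric pairing above — testing the inner prox optimality condition at $z=x$ and the outer stationarity condition at $z=y$ so that the $\norm*{x-y}^2$ terms annihilate, leaving a pure monotonicity statement that strict convexity of $g$ closes. A minor point to record is that the directional derivatives $g'(x;\cdot)$ and $g'(y;\cdot)$ are finite in the directions used: a value of $-\infty$ would already contradict, respectively, the stationarity inequality for $u_\ell$ and the optimality of $y$ in its defining problem.
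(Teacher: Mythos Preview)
Your proof is correct and follows essentially the same approach as the paper: fix $\lambda\in\Lambda(x)$, substitute $z=y\coloneqq\prox_{\frac{1}{\ell}\sum_i\lambda_iF_i+\indicator_S}(x)$ into the stationarity inequality from \cref{thm: directional differentiability of u_ell}, and use strict convexity of $g=\sum_i\lambda_iF_i$ to force $x=y$, whence $u_\ell(x)=0$. The only cosmetic difference is that the paper pairs the stationarity inequality with \cref{cor: second prox} (the value-level inequality $\ell\norm*{x-y}^2\le g(x)-g(y)$) to obtain $g'(x;y-x)\ge g(y)-g(x)$ and invoke strict convexity once, whereas you pair it with the first-order optimality condition of the prox at $y$ (the derivative-level inequality $g'(y;x-y)\ge\ell\norm*{x-y}^2$) to obtain the symmetric monotonicity inequality $g'(x;y-x)+g'(y;x-y)\ge0$ and invoke strict convexity twice; both routes are equivalent.
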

\begin{proof}
    Let~$\lambda \in \Lambda(x)$, where~$\Lambda(x)$ is given by~\cref{eq: Lambda}.
    Then, \cref{thm: directional differentiability of u_ell} gives
    \[
        \sum_{i = 1}^{m} \lambda_i F_i^\prime(x; z - x) - \ell \left( x - \prox_{\frac{1}{\ell} \sum_{i = 1}^{m} \lambda_i F_i + \indicator_S}(x) \right)^\T (z - x) \ge 0 \forallcondition{z \in S}
    .\] 
    Substituting~$z = \prox_{\frac{1}{\ell} \sum_{i = 1}^{m} \lambda_i F_i + \indicator_S}(x)$ into the above inequality, we get
    \[
        \sum_{i = 1}^{m} \lambda_i F_i^\prime \left(x; \prox_{\frac{1}{\ell} \sum_{i = 1}^{m} \lambda_i F_i + \indicator_S}(x) - x \right) + \ell \norm*{ x - \prox_{\frac{1}{\ell} \sum_{i = 1}^{m} \lambda_i F_i + \indicator_S}(x) }^2 \ge 0
    .\]
    On the other hand, \cref{cor: second prox} yields
    \[
        \norm*{x - \prox_{\frac{1}{\ell} \sum_{i = 1}^{m} \lambda_i F_i + \indicator_S}(x)}^2 \le \frac{1}{\ell} \sum_{i = 1}^{m} \lambda_i \left\{ F_i(x) - F_i(\prox_{\frac{1}{\ell} \sum_{i = 1}^{m} \lambda_i F_i + \indicator_S}(x)) \right\} 
    .\]
    Combining the above two inequalities, we have
    \begin{multline*}
        \sum_{i = 1}^{m} \lambda_i F_i^\prime \left(x; \prox_{\frac{1}{\ell} \sum_{i = 1}^{m} \lambda_i F_i + \indicator_S}(x) - x \right) \\
        \ge \sum_{i = 1}^{m} \lambda_i \left\{ F_i(\prox_{\frac{1}{\ell} \sum_{i = 1}^{m} \lambda_i F_i + \indicator_S}(x)) - F_i(x) \right\} 
    .\end{multline*}
    Since~$F_i$ is strictly convex for all~$i \in \set*{1, \dots, m}$, the above inequality implies that~$x = \prox_{\frac{1}{\ell} \sum_{i = 1}^{m} \lambda_i F_i + \indicator_S}(x)$, and hence~$u_\ell(x) = 0$.
    This means that~$x$ is Pareto optimal for~\cref{eq: CMOP} from the strict convexity of~$F_i$, \cref{lem: Pareto relationship}~\ref{enum: weak stationary},~\ref{enum: stationary pareto}, and \cref{thm: u ell}.
\end{proof}

\subsection{A regularized and partially linearized merit function for composite multiobjective optimization} \label{sec: merit composite}
Now, let us consider the composite case, i.e., each component~$F_i$ of the objective function~$F$ of~\eqref{eq: CMOP} has the following structure:
\begin{equation} \label{eq: f + g}
    F_i(x) \coloneqq f_i(x) + g_i(x), \quad i = 1, \dots, m
,\end{equation}
where~$f_i \colon S \to \setR$ is continuously differentiable but not necessarily convex, and~$g_i \colon S \to \setR$ is convex but not necessarily differentiable.
Note that since~$g_i$ is finite and convex, there exists a directional derivative~$g_i'(x; z - x)$ for any~$x, z \in S$.
Such composite objective functions have many applications, particularly in machine learning.
Since they are generally non-convex, we can regard them as a relaxation of the assumptions of the previous subsection.
For~\cref{eq: CMOP} with objective function~\cref{eq: f + g}, we propose a regularized and partially linearized merit function~$w_\ell \colon S \to \setR$ with a given~$\ell > 0$ as follows:
\begin{equation} \label{eq: w_ell}
    w_\ell(x) \coloneqq \max_{y \in S} \min_{i = 1, \dots, m} \left\{ \nabla f_i(x)^\T (x - y) + g_i(x) - g_i(y) - \frac{\ell}{2} \norm*{x - y}^2 \right\} 
.\end{equation} 
Like~$u_\ell$, the convexity of~$g_i$ leads to the finiteness of~$w_\ell$ and the existence of a unique solution that attains~$\max_{y \in S}$.
As the following remark shows,~$w_\ell$ generalizes other kinds of merit functions.
\begin{remark} \label{rem: composite ref}
\begin{enumerate}
    \item When~$g_i = 0$,~$w_\ell$ corresponds to the regularized gap function~\cite{Charitha2010} for vector variational inequality.
    \item When~$f_i = 0$,~$w_\ell$ matches~$u_\ell$ defined by~\cref{eq: u_ell}. \label{enum: w u correspond}
    \end{enumerate}
\end{remark}
As shown in the following theorem,~$w_\ell$ is a merit function in the sense of Pareto stationarity.
\begin{theorem} \label{thm: w ell}
    Let~$w_\ell$ be given by~\cref{eq: w_ell} for some~$\ell > 0$.
    Then, we have~$w_\ell(x) \ge 0$ for all~$x \in S$.
    Furthermore,~$x \in S$ is Pareto stationary for~\cref{eq: CMOP} if and only if~$w_\ell(x) = 0$.
\end{theorem}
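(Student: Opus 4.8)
The plan is to work with the concave ``linearized gap'' function $\phi_x(y) \coloneqq \min_{i = 1, \dots, m}\{\nabla f_i(x)^\T(x - y) + g_i(x) - g_i(y)\}$ on~$S$ (each summand is affine plus a concave term in~$y$, hence $\phi_x$ is concave), for which $w_\ell(x) = \max_{y \in S}\{\phi_x(y) - (\ell/2)\norm*{x - y}^2\}$, the maximum being attained at a unique point by strong concavity. Since $\phi_x(x) = 0$, taking $y = x$ already gives $w_\ell(x) \ge 0$, and moreover $w_\ell(x) = 0$ holds exactly when $x$ is itself the (unique) maximizer of $y \mapsto \phi_x(y) - (\ell/2)\norm*{x - y}^2$ over~$S$.

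For the implication ``$w_\ell(x) = 0 \Rightarrow x$ Pareto stationary'', I would mimic the proof of \cref{thm: u ell}. From $w_\ell(x) = 0$ and~\cref{eq: w_ell}, for every $y \in S$ we have $\min_i\{\nabla f_i(x)^\T(x - y) + g_i(x) - g_i(y) - (\ell/2)\norm*{x - y}^2\} \le 0$. Substituting $y = x + \alpha(z - x)$ for $z \in S$ and $\alpha \in (0, 1)$, using the convexity bound $g_i(x) - g_i(x + \alpha(z - x)) \ge \alpha(g_i(x) - g_i(z))$, dividing by~$\alpha$, and letting $\alpha \searrow 0$ yields $\min_i\{\nabla f_i(x)^\T(x - z) + g_i(x) - g_i(z)\} \le 0$ for all $z \in S$; equivalently, for each $z \in S$ there is an index $i(z)$ with $\nabla f_{i(z)}(x)^\T(z - x) + g_{i(z)}(z) - g_{i(z)}(x) \ge 0$.

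The delicate step is to upgrade this to a directional-derivative statement with a single index. Fixing $z \in S$ and applying the last inequality to $z_t \coloneqq x + t(z - x) \in S$, then dividing by $t > 0$, gives $\nabla f_{i(z_t)}(x)^\T(z - x) + t^{-1}\bigl(g_{i(z_t)}(x + t(z - x)) - g_{i(z_t)}(x)\bigr) \ge 0$. Since there are finitely many indices, I can extract $t_k \searrow 0$ along which $i(z_{t_k})$ equals a fixed~$j$; because $t \mapsto t^{-1}(g_j(x + t(z - x)) - g_j(x))$ is nondecreasing with limit $g_j^\prime(x; z - x)$ as $t \searrow 0$, passing to the limit gives $\nabla f_j(x)^\T(z - x) + g_j^\prime(x; z - x) = F_j^\prime(x; z - x) \ge 0$, hence $\max_i F_i^\prime(x; z - x) \ge 0$. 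As $z \in S$ was arbitrary, $x$ is Pareto stationary. This finitely-many-indices-plus-monotone-quotient passage to the limit is where the nonsmooth composite structure is felt, and it is the step I expect to require the most care.

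For the converse, suppose $x$ is Pareto stationary. Given $z \in S$, there is $i$ with $\nabla f_i(x)^\T(z - x) + g_i^\prime(x; z - x) \ge 0$; since convexity of $g_i$ gives $g_i^\prime(x; z - x) \le g_i(z) - g_i(x)$, we get $\nabla f_i(x)^\T(z - x) + g_i(z) - g_i(x) \ge 0$, so $\min_i\{\nabla f_i(x)^\T(x - z) + g_i(x) - g_i(z)\} \le 0$; subtracting the nonnegative term $(\ell/2)\norm*{x - z}^2$ and taking the supremum over $z \in S$ gives $w_\ell(x) \le 0$, which together with $w_\ell(x) \ge 0$ finishes the proof. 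Equivalently, the forward direction can be packaged through the first-order optimality condition for maximizing the concave $\phi_x$ over~$S$, using that all $m$ indices are active at $y = x$ so that $\phi_x^\prime(x; d) = -\max_i F_i^\prime(x; d)$; I would keep the elementary argument above as the main line since it parallels \cref{thm: u ell}.
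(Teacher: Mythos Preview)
Your proof is correct, but the route you take for the implication ``$w_\ell(x)=0\Rightarrow x$ Pareto stationary'' is more elaborate than necessary, and the paper's argument avoids precisely the step you flag as ``delicate.'' After substituting $y=x+\alpha(z-x)$ and dividing by $\alpha$, the paper does \emph{not} invoke the convexity bound $g_i(x)-g_i(x+\alpha(z-x))\ge\alpha(g_i(x)-g_i(z))$ to first reach the secant inequality $\min_i\{\nabla f_i(x)^\T(x-z)+g_i(x)-g_i(z)\}\le 0$; instead it simply lets $\alpha\searrow 0$ in the difference quotient $\alpha^{-1}(g_i(x+\alpha(z-x))-g_i(x))\to g_i'(x;z-x)$ and uses that $\min$ over finitely many continuous functions commutes with limits. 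This yields $\max_i F_i'(x;z-x)\ge 0$ in a single pass, so your second limiting argument (extracting a constant index $j$ along $t_k\searrow 0$) is never needed. In effect, you are reproving the directional-derivative statement twice: once hidden inside the convexity-bound-plus-limit that produces the secant inequality, and once explicitly in the ``delicate step.''

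For the converse, your direct argument and the paper's contrapositive are the same computation read in opposite directions: both rest on the convexity inequality $g_i'(x;y-x)\le g_i(y)-g_i(x)$ to pass between the linearized gap and the directional derivative. So there is no substantive difference there. The upshot: your proof is sound, but the paper's version is shorter because it recognizes that the limit $\alpha\searrow 0$ already delivers $F_i'(x;z-x)$ without the intermediate secant detour.
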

\begin{proof}
    We first show the nonnegativity of~$w_\ell$.
    Let~$x \in S$.
    The definition of~$w_\ell$ gives
    \[
        \begin{split}
            w_\ell(x) &= \sup_{y \in S} \min_{i = 1, \dots, m} \left\{ \nabla f_i(x)^\T (x - y) + g_i(x) - g_i(y) - \frac{\ell}{2} \norm*{x - y}^2 \right\} \\
                      &\ge \min_{i = 1, \dots, m} \left\{ \nabla f_i(x)^\T (x - x) + g_i(x) - g_i(x) - \frac{\ell}{2} \norm*{x - x}^2 \right\} = 0
        .\end{split}
    \]

    Let us prove the second statement.
    Assume that~$w_\ell(x) = 0$.
    Then, again using the definition of~$w_\ell$, we get
    \[
        \min_{i = 1, \dots, m} \left\{ \nabla f_i(x)^\T (x - y) + g_i(x) - g_i(y) - \frac{\ell}{2} \norm*{x - y}^2 \right\} \le 0 \forallcondition{y \in S}
    .\] 
    Let~$z \in S$ and~$\alpha \in (0, 1)$.
    Since~$S \subseteq \setR^n$ is convex,~$x, z \in S$ implies~$x + \alpha (z - x) \in S$.
    Therefore, by substituting~$y = x + \alpha (z - x)$ into the above inequality, we obtain
    \[
        \min_{i = 1, \dots, m} \left\{ - \nabla f_i(x)^\T (\alpha (z - x)) + g_i(x) - g_i(x + \alpha (z - x)) - \frac{\ell}{2} \norm*{\alpha (z - x)}^2 \right\} \le 0
    .\] 
    Dividing both sides by~$\alpha$ yields
    \[
        \min_{i = 1, \dots, m} \left\{ - \nabla f_i(x)^\T (z - x) - \frac{g_i(x + \alpha (z - x)) - g_i(x)}{\alpha} - \frac{\ell \alpha}{2} \norm*{z - x}^2 \right\} \le 0
    .\] 
    By taking~$\alpha \searrow 0$ and multiplying both sides by~$- 1$, we get
    \[
        \max_{i = 1, \dots, m} F_i^\prime(x; z - x) \ge 0
    ,\] 
    which means that~$x$ is Pareto stationary for~\cref{eq: CMOP}.

    Now, we prove the converse by indirect proof.
    Suppose that~$w_\ell(x) > 0$.
    Then, from the definition of~$w_\ell$, there exists some~$y \in S$ such that
    \[
        \min_{i = 1, \dots, m} \left\{ \nabla f_i(x)^\T (x - y) + g_i(x) - g_i(y) - \frac{\ell}{2} \norm*{x - y}^2 \right\} > 0
    .\] 
    Since~$g_i$ is convex, we obtain
    \[
        \min_{i = 1, \dots, m} \left\{ \nabla f_i(x)^\T (x - y) - g_i^\prime(x; y - x) - \frac{\ell}{2} \norm*{x - y}^2 \right\} > 0
    .\] 
    Thus, we have
    \[
        \max_{i = 1, \dots, m} F_i^\prime(x; y - x) \le - \frac{\ell}{2} \norm*{x - y}^2 < 0
    ,\] 
    which shows that~$x$ is not Pareto stationary for~\cref{eq: CMOP}.
\end{proof}
While~$u_0$ and~$u_\ell$ given by~\cref{eq: u_0,eq: u_ell} are merit functions in the sense of weak Pareto optimality,~$w_\ell$ defined by~\cref{eq: w_ell} is a merit function only in the sense of Pareto stationarity.
As indicated by the following example, even if~$w_\ell(x) = 0$,~$x$ is not necessarily weakly Pareto optimal for~\cref{eq: CMOP}.
\begin{example}
    Consider the single-objective function~$F\colon \setR \to \setR$ defined by~$F(x) \coloneqq f(x) + g(x)$, where
    \[
        f(x) \coloneqq - x^2 \eqand g(x) \coloneqq 0
    ,\]
    and set~$S = \setR$.
    Then, we have
    \[
        w_\ell(0) = \max_{y \in \setR} \left\{ f^\prime(0) (0 - y) + g(0) - g(y) - \frac{\ell}{2} (y - 0)^2 \right\} 
                  = \max_{y \in \setR} \left\{ - \frac{\ell}{2} y^2 \right\} = 0
    ,\] 
    but $x = 0$ is not global minimal (i.e., weakly Pareto optimal) for~$F$.
\end{example}

We now define the optimal solution mapping~~$W_\ell \colon S \to S$ associated with~\cref{eq: w_ell} by
\begin{equation} \label{eq: W_ell}
    W_\ell(x) \coloneqq \argmax_{y \in S} \min_{i = 1, \dots, m} \left\{ \nabla f_i(x)^\T (x - y) + g_i(x) - g_i(y) - \frac{\ell}{2} \norm*{x - y}^2 \right\} 
.\end{equation} 
From the optimality condition of the maximization problem associated with~\cref{eq: w_ell,eq: W_ell} and~\cite[Proposition~A.22]{Bertsekas1971}, we obtain
\[
    \ell (x - W_\ell(x)) \in \conv_{i \in \mathcal{I}(x)} \left[ \nabla f_i(x) + \partial g_i(W_\ell(x)) \right] + N_S(W_\ell(x)) \forallcondition{x \in S}
,\]
where~$N_S$ is the normal cone~\cref{eq: normal cone} to the convex set~$S$ and
\[
    \mathcal{I}(x) \coloneqq \argmin_{i = 1, \dots, m} \left[ \nabla f_i(x)^\T (x - W_\ell(x)) + g_i(x) - g_i(W_\ell(x)) \right]
.\]
Therefore, from~\cref{eq: normal cone,eq: subdifferential}, for any~$x \in S$ there exists~$\lambda (x)$ beloging to the unit~$m$-simplex~$\Delta^m$ defined by~\cref{eq: simplex} such that~$\lambda_j(x) = 0$ for all~$j \notin \mathcal{I}(x)$ and
\begin{equation} \label{eq:reg_lin_gap_optimality}
    \ell (x - W_\ell(x))^\T (z - W_\ell(x)) \le \sum_{i = 1}^m \lambda_i(x) \left[\nabla f_i(x)^\T (z - W_\ell(x)) + g_i(z) - g_i(W_\ell(x))\right]
\end{equation}
for all~$z \in S$.
Particularly, if we substitute~$z = x$, we get
\[
    \ell \norm{x - W_\ell(x)}^2 \le w_\alpha(x) + \frac{\ell}{2} \norm{x - W_\ell(x)}^2
,\]
which reduces to
\begin{equation} \label{eq:reg_lin_gap_MO_LB}
    w_\ell(x) \ge \frac{\ell}{2} \norm{x - W_\ell(x)}^2
.\end{equation}

We can also show the continuity of~$w_\ell$ and~$W_\ell$.
\begin{theorem}
    For all~$\ell > 0$,~$w_\ell$ and~$W_\ell$ defined by~\cref{eq: w_ell,eq: W_ell} are continuous on~$S$.
    Moreover, if every~$\nabla f_i$ is locally Lipschitz continuous for~$i = 1, \dots, m$,~$w_\ell$ and~$W_\ell$ are locally Lipschitz continuous and locally H\"older continuous with exponent~$1 / 2$, respectively, on~$S$.
\end{theorem}
\begin{proof}
    Let~$\Omega$ be a bounded subset of~$S$ and let~$x^1, x^2 \in \Omega$.
    Adding the two inequalities gotten by substituting~$(x, z) = (x^1, W_\ell(x^2))$ and~$(x, z) = (x^2, W_\ell(x^1))$ into~\cref{eq:reg_lin_gap_optimality}, we obtain
    \begin{align*}
        \MoveEqLeft \frac{1}{\ell} \left( W_\ell(x^1) - W_\ell(x^2) - \left(x^1 - x^2\right) \right)^\T \left( W_\ell(x^1) - W_\ell(x^2) \right) \\
        \le{} & \sum_{i = 1}^{m} \lambda_i(x^1)\left[ \nabla f_i(x^1)^\T (x^1 - W_\ell(x^1)) + g_i(x^1) - g_i\left(W_\ell(x^1)\right) \right]    \\
              & + \sum_{i = 1}^{m} \lambda_i(x^2) \left[ \nabla f_i(x^2)^\T (x^2 - W_\ell(x^2)) + g_i(x^2) - g_i\left(W_\ell(x^2)\right) \right] \\
              & + \sum_{i = 1}^{m} \lambda_i(x^1) \left[ \nabla f_i(x^1)^\T (W_\ell(x^2) - x^1) + g_i\left(W_\ell(x^2)\right) - g_i(x^1) \right] \\
              & + \sum_{i = 1}^{m} \lambda_i(x^2) \left[ \nabla f_i(x^2)^\T (W_\ell(x^1) - x^2) + g_i\left(W_\ell(x^1)\right) - g_i(x^2) \right]
        .\end{align*}
    Since~$\lambda_j(x) \neq 0$ for~$j \in \mathcal{I}(x)$, we have
    \begin{align*}
        \MoveEqLeft \frac{1}{\ell} \left( W_\ell(x^1) - W_\ell(x^2) - \left(x^1 - x^2\right) \right)^\T (W_\ell(x^1) - W_\ell(x^2)) \\
        \le{} & \min_{i = 1, \dots, m} \left[ \nabla f_i(x^1)^\T (x^1 - W_\ell(x^1)) + g_i(x^1) - g_i\left(W_\ell(x^1)\right) \right]                       \\
              & + \min_{i = 1, \dots, m} \left[ \nabla f_i(x^2)^\T (x^2 - W_\ell(x^2)) + g_i(x^2) - g_i\left(W_\ell(x^2)\right) \right]                     \\
              & + \sum_{i = 1}^{m} \lambda_i(x^1) \left[ \nabla f_i(x^1)^\T (W_\ell(x^2) - x^1) + g_i\left(W_\ell(x^2)\right) - g_i(x^1) \right] \\
              & + \sum_{i = 1}^{m} \lambda_i(x^2) \left[ \nabla f_i(x^2)^\T (W_\ell(x^1) - x^2) + g_i\left(W_\ell(x^1)\right) - g_i(x^2) \right] \\
        \le{} & \sum_{i = 1}^{m} \lambda_i(x^2) \left[ \nabla f_i(x^1)^\T (x^1 - W_\ell(x^1)) + g_i(x^1) - g_i\left(W_\ell(x^1)\right) \right]   \\
              & + \sum_{i = 1}^{m} \lambda_i(x^1) \left[ \nabla f_i(x^2)^\T (x^2 - W_\ell(x^2)) + g_i(x^2) - g_i\left(W_\ell(x^2)\right) \right] \\
              & + \sum_{i = 1}^{m} \lambda_i(x^1) \left[ \nabla f_i(x^1)^\T (W_\ell(x^2) - x^1) + g_i\left(W_\ell(x^2)\right) - g_i(x^1) \right] \\
              & + \sum_{i = 1}^{m} \lambda_i(x^2) \left[ \nabla f_i(x^2)^\T (W_\ell(x^1) - x^2) + g_i\left(W_\ell(x^1)\right) - g_i(x^2) \right]
        .\end{align*}
    Therefore, simple calculations give
    \begin{multline} \label{eq:cont_w_alpha:cont}
        \frac{1}{\ell} \norm*{W_\ell(x^1) - W_\ell(x^2)}^2 \le \frac{1}{\ell} (W_\ell(x^1) - W_\ell(x^2))^\T (x^1 - x^2) \\
        + \sum_{i = 1}^{m} \left[ \lambda(x^2) - \lambda(x^1) \right] \left[ g_i(x^1) - g_i(x^2) + \nabla f_i(x^1)^\T (x^1 - x^2) \right. \\
            \left. - (\nabla f_i(x^1) - \nabla f_i(x^2))^\T x^2 \right] \\
        + \sum_{i = 1}^{m} \lambda_i(x^1) (\nabla f_i(x^1) - \nabla f_i(x^2))^\T W_\ell(x^2)
        + \sum_{i = 1}^{m} \lambda_i(x^2) (\nabla f_i(x^2) - \nabla f_i(x^1))^\T W_\ell(x^1)
        .\end{multline}
    When~$x^1 \to x^2$, the right-hand side tends to zero, which means the continuity of~$W_\ell$ on~$C$.
    Therefore, from the definition, we can also say that~$w_\ell$ is continuous on~$C$ immediately.

    Assume that each~$\nabla f_i, i = 1, \dots, m$ is locally Lipschitz continuous.
    Since~$g_i$ is also locally Lipschitz continuous from~\cref{thm:local_Lipschitz}, we can prove the local H\"older continuity of~$W_\ell$ from~\cref{eq:cont_w_alpha:cont}.
    On the other hand, the definitions~\cref{eq: w_ell,eq: W_ell} of~$w_\ell$ and~$W_\ell$ give
    \begin{align*}
              & w_\ell(x^1) - w_\ell(x^2)                                                                                                                                                                                           \\
        ={}   & \min_{i = 1, \dots, m} \left[ \nabla f_i(x^1)^\T (x^1 - W_\ell(x^1)) + g_i(x^1) - g_i\left(W_\ell(x^1)\right) \right] - \frac{\ell}{2} \norm*{x^1 - W_\ell(x^1)}^2 \\
              & - \max_{y \in S} \min_{i = 1, \dots, m} \left[ \nabla f_i(x^2)^\T (x^2 - y) + g_i(x^2) - g_i(y) - \frac{\ell}{2} \norm*{x^2 - y}^2 \right] \\
        \le{} & \min_{i = 1, \dots, m} \left[ \nabla f_i(x^1)^\T (x^1 - W_\ell(x^1)) + g_i(x^1) - g_i\left(W_\ell(x^1)\right) \right] - \frac{\ell}{2} \norm*{x^1 - W_\ell(x^1)}^2 \\
              & - \min_{i = 1, \dots, m} \left[ \nabla f_i(x^2)^\T (x^2 - W_\ell(x^1)) + g_i(x^2) - g_i\left(W_\ell(x^1)\right) \right] + \frac{\ell}{2} \norm*{x^2 - W_\ell(x^1)}^2 \\
        \le{} & \max_{i = 1, \dots, m} \left[ \left( \nabla f_i(x^1) - \nabla f_i(x^2) \right)^\T \left( x^1 - W_\ell(x^1) \right) + \nabla f_i(x^2)^\T (x^1 - x^2) + g_i(x^1) - g_i(x^2) \right] \\
              & - \frac{\ell}{2} \left( x^1 - x^2 \right)^\T \left( x^1 + x^2 - 2 W_\ell(x^1) \right) \\
        \le{} & \norm*{x^1 - W_\ell(x^1)} \max_{i = 1, \dots, m} \norm*{\nabla f_i(x^1) - \nabla f_i(x^2)} \\
              & + \max_{i = 1, \dots, m} \norm*{\nabla f_i(x^2)} \norm{x^1 - x^2} + \max_{i = 1, \dots, m} \abs{g_i(x^1) - g_i(x^2)} \\
              & + \frac{\ell}{2} \norm*{x^1 + x^2 - 2 W_\ell(x^1)} \norm*{x^1 - x^2}
        ,\end{align*}
    where the first inequality comes from the inequality~$\min_{i = 1, \dots, m} v^1_i - \min_{i = 1, \dots, m} v^2_i \ge \min_{i = 1, \dots, m} (v^1_i - v^2_i)$ for any~$v^1, v^2 \in \setR^m$, and the third inequality follows from the Cauchy-Schwarz inequality.
    The above inequality holds even if we interchange~$x^1$ and~$x^2$.
    Furthermore,~$W_\ell(x)$ and~$\nabla f_i(x)$ are bounded for any~$x \in \Omega$ due to their continuity.
    Therefore, local Lipschitz continuity of~$\nabla f_i$ and~$g_i$ implies the local Lipschitz continuity of~$w_\ell$.
\end{proof}

On the other hand, in the same way as the derivation of~\cref{eq: u dual}, Sion's minimax theorem~\cite{Sion1958} gives another representation of~$w_\ell$ for~$\ell > 0$ as follows:
\begin{equation} \label{eq: w_ell Sion}
    w_\ell(x) = \min_{\gamma \in \Delta^m} \max_{y \in S} \sum_{i = 1}^{m} \gamma_i \left\{ \nabla f_i(x)^\T (x - y) + g_i(x) - g_i(y) - \frac{\ell}{2} \norm*{x - y}^2 \right\} 
,\end{equation} 
where~$\Delta^m$ denotes the standard simplex~\cref{eq: simplex}.
Moreover, simple calculations show that
\begin{equation} \label{eq: w_ell dual}
    \begin{alignedat}{2}
        w_\ell(x) &= \min_{\gamma \in \Delta^m} &&\left[ \sum_{i = 1}^{m} \gamma_i g_i(x) + \frac{1}{2 \ell} \norm*{\sum_{i = 1}^{m} \gamma_i \nabla f_i(x)}^2 \right. \\
                  &&& \left. {}- \min_{y \in S} \left\{ \sum_{i = 1}^{m} \gamma_i g_i(y) + \frac{\ell}{2} \norm*{x - \frac{1}{\ell} \sum_{i = 1}^{m} \gamma_i \nabla f_i(x) - y}^2 \right\} \right] \\
                  &= \min_{\gamma \in \Delta^m} &&\left[ \sum_{i = 1}^{m} \gamma_i g_i(x) + \frac{1}{2 \ell} \norm*{\sum_{i = 1}^{m} \gamma_i \nabla f_i(x)}^2 \right. \\
                  &&& \left. {}- \ell \envelope_{\frac{1}{\ell} \sum_{i = 1}^{m} \gamma_i g_i + \indicator_S} \left( x - \frac{1}{\ell} \sum_{i = 1}^{m} \gamma_i \nabla f_i(x) \right) \right]
              ,\end{alignedat}
          \end{equation} 
where~$\envelope$ and~$\indicator$ is given by~\cref{eq: Moreau envelope,eq: indicator function}, respectively.
In other words, we can compute~$w_\ell$ via the following~$m$-dimensional, simplex-constrained, and convex optimization problem:
\begin{equation} \label{eq: dual of w_ell}
    \begin{aligned}
        \min_{\gamma \in \setR^m} &&& \sum_{i = 1}^{m} \gamma_i g_i(x) + \frac{1}{2 \ell} \norm*{\sum_{i = 1}^{m} \gamma_i \nabla f_i(x)}^2 \\
                                  &&&\quad - \ell \envelope_{\frac{1}{\ell} \sum_{i = 1}^{m} \gamma_i g_i + \indicator_S}\left( x - \frac{1}{\ell} \sum_{i = 1}^{m} \gamma_i \nabla f_i(x) \right)  \\
        \st &&& \gamma \ge 0 \eqand \sum_{i = 1}^{m} \gamma_i = 1 
    .\end{aligned}
\end{equation} 
Moreover, the following theorem proves that the objective function of~\cref{eq: dual of w_ell} is continuously differentiable.
\begin{theorem} \label{thm: differentiability of the objective function of w_ell}
    Let~$x \in S$ be given.
    The objective function of~\cref{eq: dual of w_ell} is continuously differentiable at every~$\gamma \in \setR^m$ and
    \begin{multline*}
        \nabla_\gamma \left[ \sum_{i = 1}^{m} \gamma_i g_i(x) + \frac{1}{2 \ell} \norm*{\sum_{i = 1}^{m} \gamma_i \nabla f_i(x)}^2 - \ell \envelope_{\frac{1}{\ell} \sum_{i = 1}^{m} g_i + \indicator_S}\left( x - \frac{1}{\ell} \sum_{i = 1}^{m} \gamma_i \nabla f_i(x) \right)  \right] \\
        = g(x) - g\left( \prox_{\frac{1}{\ell} \sum_{i = 1}^{m} \gamma_i g_i + \indicator_S}\left( x - \frac{1}{\ell} \sum_{i = 1}^{m} \gamma_i \nabla f_i(x) \right) \right) \\
        - J_f(x) \left( \prox_{\frac{1}{\ell} \sum_{i = 1}^{m} \gamma_i g_i + \indicator_S}\left( x - \frac{1}{\ell} \sum_{i = 1}^{m} \gamma_i \nabla f_i(x) \right) - x \right) 
    ,\end{multline*}
    where~$\prox$ is the proximal operator~\cref{eq: proximal operator}, and~$J_f(x)$ is the Jacobian matrix at~$x$ given by
    \[
        J_f(x) \coloneqq \left( \nabla f_1(x), \dots, \nabla f_m(x) \right)^\T
    .\]
\end{theorem}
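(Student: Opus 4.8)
The plan is to reduce the claim to \cref{thm: differentiability of the objective function of u_ell} through a linearization device, so that no fresh appeal to \cref{prop: directional differentiability} is needed. Fix $x \in S$ and, for each $i = 1, \dots, m$, introduce the auxiliary function $\tilde{F}_i \colon S \to \setR$,
\[
    \tilde{F}_i(y) \coloneqq g_i(y) + \nabla f_i(x)^\T (y - x)
,\]
which is convex and continuous (a convex function plus an affine one) and satisfies $\tilde{F}_i(x) = g_i(x)$. Applying \cref{thm: differentiability of the objective function of u_ell} to the convex objective $\tilde{F} \coloneqq (\tilde{F}_1, \dots, \tilde{F}_m)^\T$ in place of $F$, the map
\[
    \gamma \mapsto \sum_{i=1}^{m} \gamma_i \tilde{F}_i(x) - \ell\, \envelope_{\frac{1}{\ell} \sum_{i=1}^{m} \gamma_i \tilde{F}_i + \indicator_S}(x)
\]
is continuously differentiable at every $\gamma$, with gradient $\tilde{F}(x) - \tilde{F}\bigl( \prox_{\frac{1}{\ell} \sum_{i=1}^{m} \gamma_i \tilde{F}_i + \indicator_S}(x) \bigr)$.

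It then remains to identify this auxiliary dual objective with the objective of \cref{eq: dual of w_ell}, which I would do by completing the square. Write $b(\gamma) \coloneqq \sum_{i=1}^{m} \gamma_i \nabla f_i(x)$. Using $\ell\, \envelope_h(z) = \min_{y}\{ \ell h(y) + \frac{\ell}{2} \norm*{z - y}^2 \}$ and $\ell\, \indicator_S = \indicator_S$,
\[
    \ell\, \envelope_{\frac{1}{\ell} \sum_i \gamma_i \tilde{F}_i + \indicator_S}(x) = \min_{y \in S} \Bigl\{ \textstyle\sum_{i} \gamma_i g_i(y) + b(\gamma)^\T (y - x) + \frac{\ell}{2} \norm*{x - y}^2 \Bigr\}
,\]
and completing the square in $y$ rewrites $b(\gamma)^\T (y - x) + \frac{\ell}{2} \norm*{x - y}^2$ as $\frac{\ell}{2} \norm*{y - (x - \frac{1}{\ell} b(\gamma))}^2 - \frac{1}{2\ell} \norm*{b(\gamma)}^2$; hence the right-hand side equals $\ell\, \envelope_{\frac{1}{\ell} \sum_i \gamma_i g_i + \indicator_S}(x - \frac{1}{\ell} b(\gamma)) - \frac{1}{2\ell} \norm*{b(\gamma)}^2$. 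Since also $\sum_i \gamma_i \tilde{F}_i(x) = \sum_i \gamma_i g_i(x)$, the auxiliary objective is exactly the objective of \cref{eq: dual of w_ell}, and the same computation yields $\prox_{\frac{1}{\ell} \sum_i \gamma_i \tilde{F}_i + \indicator_S}(x) = \prox_{\frac{1}{\ell} \sum_i \gamma_i g_i + \indicator_S}(x - \frac{1}{\ell} b(\gamma))$.

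Finally I would translate the gradient: setting $p \coloneqq \prox_{\frac{1}{\ell} \sum_i \gamma_i g_i + \indicator_S}(x - \frac{1}{\ell} b(\gamma))$, the $i$-th component of $\tilde{F}(x) - \tilde{F}(p)$ is $g_i(x) - g_i(p) - \nabla f_i(x)^\T(p - x)$, and assembling these into a vector with $J_f(x) = (\nabla f_1(x), \dots, \nabla f_m(x))^\T$ produces precisely the stated formula. I expect the only genuinely fiddly step to be the bookkeeping in completing the square — keeping the factors $\ell$ and $1/\ell$ and the shift in the envelope's argument straight — but this is routine. Alternatively, one can argue directly as in the proof of \cref{thm: differentiability of the objective function of u_ell} by applying \cref{prop: directional differentiability} to $\min_{y \in S}\{ \sum_i \gamma_i g_i(y) + \frac{\ell}{2} \norm*{x - \frac{1}{\ell} b(\gamma) - y}^2 \}$; there the main obstacle is verifying the uniform level-boundedness hypothesis, which follows from the $\ell$-strong convexity of this problem's objective in $y$.
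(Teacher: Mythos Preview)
Your proof is correct and takes a genuinely different route from the paper. The paper argues directly: it sets $\theta(y,\gamma) \coloneqq \sum_i \gamma_i g_i(y) + \tfrac{\ell}{2}\norm*{x - \tfrac{1}{\ell}\sum_i \gamma_i \nabla f_i(x) - y}^2$, verifies the hypotheses of \cref{prop: directional differentiability} for the parameterized problem $\min_{y \in S} \theta(y,\gamma)$ (continuity of $\theta$, continuity of the unique minimizer via \cite[Exercise~7.38]{Rockafellar1998}), obtains the gradient of the envelope term, and then differentiates the remaining terms $\sum_i \gamma_i g_i(x) + \tfrac{1}{2\ell}\norm{b(\gamma)}^2$ by hand and adds. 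This is exactly the ``alternative'' you sketch at the end. Your primary argument instead absorbs the linear part of each $f_i$ into an auxiliary convex objective $\tilde F_i(y) = g_i(y) + \nabla f_i(x)^\T(y-x)$ and invokes \cref{thm: differentiability of the objective function of u_ell} wholesale, then identifies the two dual objectives by completing the square. This reduction is cleaner and mirrors the device the paper itself uses to deduce continuity of $w_\ell$ and $W_\ell$ from that of $u_\ell$ and $U_\ell$; it buys you the result without re-checking the sensitivity hypotheses. The paper's direct approach, on the other hand, is self-contained and makes the separate contributions of the $g$-, $f$-, and envelope-terms to the gradient visible before they are combined.
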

\begin{proof}
    Let
    \[
        \theta(y, \lambda) \coloneqq \sum_{i = 1}^{m} \lambda_i g_i(y) + \frac{\ell}{2} \norm*{x - \frac{1}{\ell} \sum_{i = 1}^{m} \gamma_i \nabla f_i(x) - y}^2
    .\] 
    Then,~$\theta$ is continuous,~$\theta_y(\cdot) \coloneqq \theta(y, \cdot)$ is continuously differentiable, and
    \[
        \nabla_\gamma \theta_y(\gamma) = g(y) + J_f(x) \left( y - x + \frac{1}{\ell} \sum_{i = 1}^{m} \gamma_i \nabla f_i(x) \right) 
    .\] 
    Moreover,~$\prox_{\frac{1}{\ell} \sum_{i = 1}^{m} \gamma_i g_i + \indicator_S}(x) = \argmin_{y \in S} \theta(y, \lambda)$ is also continuous at every~$\gamma \in \setR^m$ (cf.~\cite[Excercise 7.38]{Rockafellar1998}).
    The above discussion implies that every assumption in \cref{prop: directional differentiability} is satisfied, as well as the proof of \cref{thm: differentiability of the objective function of u_ell}.
    Combined with the uniqueness of~$\prox_{\frac{1}{\ell} \sum_{i = 1}^{m} \gamma_i g_i + \indicator_S}(x)$, we get
    \begin{multline*}
        \nabla_\gamma \left[ \ell \envelope_{\frac{1}{\ell} \sum_{i = 1}^{m} \gamma_i g_i + \indicator_S}\left( x - \frac{1}{\ell} \sum_{i = 1}^{m} \gamma_i \nabla f_i(x) \right) \right] \\ %TODO
    = g\left( \prox_{\frac{1}{\ell} \sum_{i = 1}^{m} \gamma_i g_i + \indicator_S}\left( x - \frac{1}{\ell} \sum_{i = 1}^{m} \gamma_i \nabla f_i(x) \right) \right) \\
    + J_f(x) \left( \prox_{\frac{1}{\ell} \sum_{i = 1}^{m} \gamma_i g_i + \indicator_S}\left( x - \frac{1}{\ell} \sum_{i = 1}^{m} \gamma_i \nabla f_i(x) \right) - x + \frac{1}{\ell} \sum_{i = 1}^{m} \gamma_i \nabla f_i(x) \right) 
    .\end{multline*}
    On the other hand, we have
    \[
        \nabla_\gamma \left[ \sum_{i = 1}^{m} \gamma_i g_i(x) + \frac{1}{2 \ell} \norm*{\sum_{i = 1}^{m} \gamma_i \nabla f_i(x)}^2 \right] = g(x) + \frac{1}{\ell} J_f(x) \sum_{i = 1}^{m} \gamma_i \nabla f_i(x) 
    .\] 
    Adding the above two equalities, we obtain the desired result.
\end{proof}
Thus, like~\cref{eq: dual of u_ell}, \cref{eq: dual of w_ell} is solvable with convex optimization techniques such as the interior point method~\cite{Bertsekas1999} when we can quickly evaluate~$\prox_{\frac{1}{\ell} \sum_{i = 1}^{m} \lambda_i g_i + \indicator_S}(\cdot)$.
When~$n \gg m$, this usually gives a faster way to compute~$w_\ell$.
Note, for example, that if~$g_i(x) = 0$ for all~$i = 1, \dots, m$, then~$\prox_{\frac{1}{\ell} \sum_{i = 1}^{m} \lambda_i g_i + \indicator_S}$ reduces to the projection onto~$S$ from~\cref{eq: proximal operator of the indicator function}.
Moreover, for example, if~$g_i(x) = g_1(x)$ for any~$i = 1, \dots, m$, or if~$g_i(x) = g_1(x_{I_i})$ and the index sets~$I_i \subseteq \set*{1, \dots, n}$ do not overlap each other, then~$\prox_{\frac{1}{\ell} \sum_{i = 1}^{m} \lambda_i g_i}$ is computable with each~$\prox_{g_i}$ when~$S = \setR^n$.

Now, define the optimal solution set of~\cref{eq: dual of w_ell} by
\begin{multline} \label{eq: Gamma}
    \Gamma(x) = \argmin_{\gamma \in \Delta^m} \left\{ \sum_{i = 1}^{m} \gamma_i g_i(x) + \frac{1}{2 \ell} \norm*{\sum_{i = 1}^{m} \gamma_i \nabla f_i(x)}^2 \right. \\
    \left. {}- \ell \envelope_{\frac{1}{\ell} \sum_{i = 1}^{m} \gamma_i g_i + \indicator_S}\left( x - \frac{1}{\ell} \sum_{i = 1}^{m} \gamma_i \nabla f_i(x) \right) \right\} 
.\end{multline} 
Then, in the same manner as \cref{thm: directional differentiability of u_ell}, we obtain the following theorem.
\begin{theorem} \label{thm: directional differentiability of w_ell}
    Let~$x \in S$.
    Assume that~$f_i$ is twice continuously differentiable at~$x$.
    Then, for all~$\ell > 0$, the merit function~$w_\ell$ defined by~\cref{eq: w_ell} has a directional derivative
    \begin{multline*}
        w_\ell^\prime(x; z - x) = \inf_{\gamma \in \Gamma(x)} \left\{ \sum_{i = 1}^{m} \gamma_i g_i^\prime(x; z - x) \right. \\
        {}- \ell \left( \left[ I - \frac{1}{\ell} \sum_{i = 1}^{m} \gamma_i \nabla^2 f_i(x) \right] \left[x - \prox_{\frac{1}{\ell} \sum_{i = 1}^{m} \gamma_i g_i + \indicator_S} \left( x - \frac{1}{\ell} \sum_{i = 1}^{m} \gamma_i \nabla f_i(x) \right) \right] \right.\\
        \left. \left.- \frac{1}{\ell} \sum_{i = 1}^{m} \gamma_i \nabla f_i(x) \right)^\T (z - x) \right\} 
    \end{multline*}
    for all~$z \in S$, where~$\prox$ and~$\Gamma$ is given by~\cref{eq: proximal operator,eq: Gamma}, respectively, and~$I$ is the~$n$-dimensional identity matrix.
    In particular, if~$\Gamma(x)$ is a singleton, i.e.,~$\Gamma(x) = \set*{\gamma(x)}$, and~$g_i$ is continuously differentiable at~$x$, then~$w_\ell$ is continuously differentiable at~$x$, and we have
    \begin{multline*}
        \nabla w_\ell(x) = \sum_{i = 1}^{m} \gamma_i(x) \nabla F_i(x) \\
        {}- \ell \left[ I - \frac{1}{\ell} \sum_{i = 1}^{m} \gamma_i(x) \nabla^2 f_i(x) \right] \left[x - \prox_{\frac{1}{\ell} \sum_{i = 1}^{m} \gamma_i(x) g_i + \indicator_S} \left( x - \frac{1}{\ell} \sum_{i = 1}^{m} \gamma_i(x) \nabla f_i(x) \right) \right]
    .\end{multline*}
\end{theorem}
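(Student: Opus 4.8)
The plan is to follow, along the same lines as the proof of \cref{thm: directional differentiability of u_ell}, an application of the sensitivity result \cref{prop: directional differentiability} to the minimax representation~\cref{eq: w_ell dual} of~$w_\ell$. Writing~$h(x, \gamma)$ for the function of~$x$ and~$\gamma$ given by the objective of~\cref{eq: dual of w_ell} (equivalently, the function appearing in~\cref{eq: w_ell dual}), so that~$w_\ell(x) = \min_{\gamma \in \Delta^m} h(x, \gamma)$, I would invoke \cref{prop: directional differentiability} with the feasible set~$\Delta^m$ (which is compact, so hypothesis~(ii) holds trivially with~$C = \Delta^m$), with the parameter being~$x$, and with the optimal solution mapping~$\Gamma$ of~\cref{eq: Gamma}. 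Its conclusion then reads~$w_\ell^\prime(x; z - x) = \inf_{\gamma \in \Gamma(x)} h_\gamma^\prime(x; z - x)$, where~$h_\gamma(\cdot) \coloneqq h(\cdot, \gamma)$, and the remaining work is to compute~$h_\gamma^\prime$ and to check hypotheses~(i), (iii), and~(iv) of the proposition.

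For the continuity hypothesis~(i): the first two summands of~$h$ are jointly continuous on~$S \times \Delta^m$ because each~$g_i$ is finite convex, hence continuous, and each~$\nabla f_i$ is continuous; the Moreau-envelope term is the composition of the jointly continuous map~$(x, \gamma) \mapsto x - \frac{1}{\ell} \sum_i \gamma_i \nabla f_i(x)$ with~$(u, \gamma) \mapsto \envelope_{\frac{1}{\ell} \sum_i \gamma_i g_i + \indicator_S}(u)$, and the latter is continuous by~\cite[Theorem~7.37]{Rockafellar1998}. For hypothesis~(iii), fix~$\gamma \in \Delta^m$: the only terms of~$x \mapsto h(x, \gamma)$ that are not differentiable are~$\sum_i \gamma_i g_i$, which are nonetheless directionally differentiable by convexity and contribute~$\sum_i \gamma_i g_i^\prime(x; z - x)$; the quadratic term and the envelope term are genuinely differentiable in~$x$ — this is where twice continuous differentiability of~$f_i$ is used, so that~$x \mapsto \sum_i \gamma_i \nabla f_i(x)$ is~$C^1$ with Jacobian~$\sum_i \gamma_i \nabla^2 f_i(x)$ — and applying the chain rule together with~$\nabla \envelope_h(u) = u - \prox_h(u)$ from \cref{thm: smoothness of Moreau envelope} (the differentiation already carried out, in the~$\gamma$-variable, in \cref{thm: differentiability of the objective function of w_ell}) and then collecting terms using~$\left(x - \frac{1}{\ell} \sum_i \gamma_i \nabla f_i(x)\right) + \frac{1}{\ell} \sum_i \gamma_i \nabla f_i(x) = x$ recovers precisely the bracketed expression in the statement. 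This rearrangement is the most tedious step, but it is routine.

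The one hypothesis that needs genuine care is~(iv). I would split~$h = G + R$ with~$G(x, \gamma) \coloneqq \sum_i \gamma_i g_i(x)$ and~$R$ the remaining part, which is~$C^1$ in~$x$ with~$\nabla_x R$ jointly continuous in~$(x, \gamma)$ (using~$f_i \in C^2$ and the continuity of~$\prox$, cf.~\cite[Exercise~7.38]{Rockafellar1998}). Given~$t_k \searrow 0$ and a sequence~$\gamma^k \in \Delta^m$, pass to a convergent subsequence~$\gamma^{k_j} \to \bar\gamma \in \Delta^m$, and set~$d \coloneqq z - x$. For the~$G$-part, convexity makes~$t \mapsto \bigl(g_i(x + t d) - g_i(x)\bigr)/t$ nondecreasing, so the difference quotients converge to~$g_i^\prime(x; d)$, and since~$\gamma^{k_j} \to \bar\gamma$ we get~$\bigl(G(x + t_{k_j} d, \gamma^{k_j}) - G(x, \gamma^{k_j})\bigr)/t_{k_j} \to \sum_i \bar\gamma_i g_i^\prime(x; d)$. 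For the~$R$-part, the mean value theorem along the segment together with joint continuity of~$\nabla_x R$ gives~$\bigl(R(x + t_{k_j} d, \gamma^{k_j}) - R(x, \gamma^{k_j})\bigr)/t_{k_j} \to \nabla_x R(x, \bar\gamma)^\T d = R_{\bar\gamma}^\prime(x; d)$. Adding, the difference quotients of~$h$ along the subsequence converge to~$h_{\bar\gamma}^\prime(x; d)$, so the~$\limsup$ over the full sequence is at least~$h_{\bar\gamma}^\prime(x; d)$, which is hypothesis~(iv). \Cref{prop: directional differentiability} then yields the claimed formula for~$w_\ell^\prime(x; z - x)$. Finally, if~$\Gamma(x) = \set*{\gamma(x)}$ is a singleton and each~$g_i$ is differentiable at~$x$, then~$z \mapsto h_{\gamma(x)}^\prime(x; z - x)$ is linear in~$z - x$ with the stated gradient, and — exactly as argued for~$u_\ell$ in \cref{thm: directional differentiability of u_ell} — the continuity of~$\Gamma$, of~$\prox$, and of the problem data near~$x$ promotes this to continuous differentiability of~$w_\ell$ at~$x$. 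The main obstacle overall is thus not any single deep point but rather getting the chain-rule bookkeeping for~$h_\gamma^\prime$ into the compact displayed form and handling hypothesis~(iv) carefully, the latter being delicate because~$h_\gamma$ is only directionally, not Fréchet, differentiable in~$x$.
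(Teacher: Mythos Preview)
Your proposal is correct and follows essentially the same approach as the paper, which simply states that the result is obtained ``in the same manner as \cref{thm: directional differentiability of u_ell}'' by applying \cref{prop: directional differentiability} to the dual representation~\cref{eq: w_ell dual}. Your treatment is in fact more explicit than the paper's, particularly in the verification of hypothesis~(iv) via the decomposition~$h = G + R$ into a convex part and a~$C^1$ part; the paper's proof of \cref{thm: directional differentiability of u_ell} handles this hypothesis only implicitly through the joint continuity of~$h_\lambda^\prime(x; z - x)$ in~$(x, z, \lambda)$.
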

If the convex part~$g_i$ is the same regardless of~$i$, we get the following corollary without assuming the differentiability of~$g_i$.
\begin{corollary} \label{cor: w_ell composite}
    Let~$x \in S$ and~$\ell > 0$.
    Assume that~$f_i$ is twice continuously differentiable at~$x$ and~$g_i = g_1$ for all~$i = 1, \dots, m$, and recall that~$w_\ell$ and~$\prox$ be defined by~\cref{eq: w_ell,eq: proximal operator}, respectively.
    If~$\Gamma(x)$ given by~\cref{eq: Gamma} is a singleton, i.e.,~$\Gamma(x) = \set*{\gamma(x)}$, then the function~$w_\ell - g_1$ is continuously differentiable at~$x$, and we have
    \begin{multline*}
        \nabla_x \left( w_\ell(x) - g_1(x) \right) \\
        = - \ell \left[ I - \frac{1}{\ell} \sum_{i = 1}^{m} \gamma_i(x) \nabla^2 f_i(x) \right] \left[x - \prox_{\frac{1}{\ell} g_1 + \indicator_S} \left( x - \frac{1}{\ell} \sum_{i = 1}^{m} \gamma_i(x) \nabla f_i(x) \right) \right] \\
        + \sum_{i = 1}^{m} \gamma_i(x) \nabla f_i(x)
    .\end{multline*}
\end{corollary}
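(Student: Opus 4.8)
The plan is to reduce the claim to the differentiability machinery already developed for $w_\ell$, exploiting the simplification that the hypothesis $g_i = g_1$ brings to the dual representation~\cref{eq: w_ell dual}. Since every $\gamma \in \Delta^m$ satisfies $\sum_{i=1}^m \gamma_i = 1$, when $g_i = g_1$ for all $i$ we have $\sum_{i=1}^m \gamma_i g_i \equiv g_1$ and hence $\frac{1}{\ell}\sum_{i=1}^m \gamma_i g_i + \indicator_S = \frac{1}{\ell} g_1 + \indicator_S$, so~\cref{eq: w_ell dual} collapses to
\[
    w_\ell(x) - g_1(x) = \min_{\gamma \in \Delta^m} H(x, \gamma), \quad H(x, \gamma) \coloneqq \frac{1}{2\ell}\norm*{\sum_{i=1}^m \gamma_i \nabla f_i(x)}^2 - \ell\, \envelope_{\frac{1}{\ell} g_1 + \indicator_S}\!\left(x - \frac{1}{\ell}\sum_{i=1}^m \gamma_i \nabla f_i(x)\right),
\]
and, by~\cref{eq: Gamma}, $\Gamma(x) = \argmin_{\gamma \in \Delta^m} H(x, \gamma)$. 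The decisive gain is that, unlike the full dual objective, $H(\cdot, \gamma)$ is continuously differentiable near $x$ for each fixed $\gamma$: by~\cref{thm: smoothness of Moreau envelope} the Moreau envelope has the $1$-Lipschitz gradient $x \mapsto x - \prox_{\frac{1}{\ell} g_1 + \indicator_S}(x)$, and $x \mapsto x - \frac{1}{\ell}\sum_i \gamma_i \nabla f_i(x)$ is $C^1$ because each $f_i$ is twice continuously differentiable near $x$ — so no differentiability of $g_1$ is needed anywhere.

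Given this, I would run exactly the argument used in the proofs of~\cref{thm: directional differentiability of u_ell,thm: directional differentiability of w_ell}: apply~\cref{prop: directional differentiability} to the optimal value function $w_\ell - g_1 = \min_{\gamma \in \Delta^m} H(\cdot, \gamma)$. Its four hypotheses hold because $H$ is continuous on $S \times \Delta^m$, $\Delta^m$ is compact (so it serves as the set $C$), $H(\cdot,\gamma)$ is differentiable at $x$, and the Danskin-type uniformity condition follows from the joint continuity of $\nabla_x H$ (via the mean value theorem). Hence $w_\ell - g_1$ is directionally differentiable at $x$ with $(w_\ell - g_1)'(x; z - x) = \inf_{\gamma \in \Gamma(x)} \nabla_x H(x, \gamma)^\T(z - x)$; since $\Gamma(x) = \set*{\gamma(x)}$ this equals $\nabla_x H(x, \gamma(x))^\T(z - x)$, which is linear in $z - x$, so $w_\ell - g_1$ is differentiable at $x$ with $\nabla_x(w_\ell(x) - g_1(x)) = \nabla_x H(x, \gamma(x))$, and continuity of this gradient near $x$ follows, just as in those proofs, from the joint continuity of $\nabla_x H$ together with the continuity at $x$ of the single-valued argmin map $\Gamma$. (Alternatively, the directional derivative can be read off directly from~\cref{thm: directional differentiability of w_ell}: with $g_i = g_1$ and $\Gamma(x)$ a singleton, $\sum_i \gamma_i(x) g_i'(x; z - x) = g_1'(x; z - x)$, which cancels against the $-g_1'(x; z - x)$ coming from subtracting $g_1$, leaving precisely the linear expression above.)

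It remains to evaluate $\nabla_x H(x, \gamma(x))$. Fix $\gamma = \gamma(x)$ and abbreviate $v \coloneqq \sum_i \gamma_i(x) \nabla f_i(x)$, $A \coloneqq \sum_i \gamma_i(x) \nabla^2 f_i(x)$ (symmetric), $q \coloneqq x - \frac{1}{\ell} v$, and $P \coloneqq \prox_{\frac{1}{\ell} g_1 + \indicator_S}(q)$. Differentiating $H(\cdot, \gamma)$ at $x$ by the chain rule and~\cref{thm: smoothness of Moreau envelope} gives
\[
    \nabla_x H(x, \gamma(x)) = \frac{1}{\ell} A v - \ell\left(I - \frac{1}{\ell} A\right)(q - P) = \frac{1}{\ell} A v - \ell\left(I - \frac{1}{\ell} A\right)(x - P) + v - \frac{1}{\ell} A v,
\]
using $q - P = (x - P) - \frac{1}{\ell} v$; the two $\frac{1}{\ell} A v$ terms cancel, leaving exactly the claimed formula $-\ell[I - \frac{1}{\ell}\sum_i \gamma_i(x)\nabla^2 f_i(x)][x - \prox_{\frac{1}{\ell} g_1 + \indicator_S}(x - \frac{1}{\ell}\sum_i \gamma_i(x)\nabla f_i(x))] + \sum_i \gamma_i(x)\nabla f_i(x)$.

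The main obstacle is conceptual, not computational: since $g_1$ is only convex, $w_\ell$ itself need not be differentiable at $x$, so~\cref{thm: directional differentiability of w_ell} cannot be quoted as it stands. The resolution is the observation in the first paragraph that, when $g_i = g_1$, the \emph{entire} non-smooth contribution to the dual objective~\cref{eq: w_ell dual} is the $\gamma$-independent term $\sum_i \gamma_i g_i(x) = g_1(x)$; subtracting $g_1$ deletes it and turns the parametric problem into one with a $C^1$ integrand, after which the sensitivity analysis proceeds exactly as in the convex case. A secondary point requiring care is the upgrade from the (linear) directional derivative at $x$ to \emph{continuous} differentiability near $x$, which — as in~\cref{thm: directional differentiability of u_ell,thm: directional differentiability of w_ell} — rests on the continuity at $x$ of the single-valued map $\Gamma$ and on each $f_i$ being $C^2$ on a neighbourhood of $x$ (so that $\nabla_x H$ is jointly continuous), not merely at $x$ itself.
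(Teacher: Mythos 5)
Your proposal is correct, and your parenthetical remark is in fact the paper's entire (implicit) argument: the paper gives no separate proof of \cref{cor: w_ell composite}, presenting it as an immediate consequence of \cref{thm: directional differentiability of w_ell} --- whose directional-derivative formula requires no differentiability of $g_i$ --- by noting that when $g_i = g_1$ and $\Gamma(x) = \set*{\gamma(x)}$ the term $\sum_i \gamma_i(x)\, g_i^\prime(x; z-x) = g_1^\prime(x; z-x)$ cancels upon subtracting $g_1$, leaving a directional derivative linear in $z - x$. Your primary route instead reapplies \cref{prop: directional differentiability} directly to the reduced dual objective $w_\ell - g_1 = \min_{\gamma \in \Delta^m} H(\cdot,\gamma)$, exploiting that $\sum_i \gamma_i g_i \equiv g_1$ makes $H(\cdot,\gamma)$ genuinely $C^1$ in $x$; this is a little more work but makes explicit the two points the paper glosses over, namely why a linear directional derivative upgrades to continuous differentiability (continuity of the single-valued argmin map plus joint continuity of $\nabla_x H$) and where exactly the non-smoothness is quarantined. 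Your chain-rule evaluation of $\nabla_x H(x,\gamma(x))$, including the cancellation of the two $\frac{1}{\ell}Av$ terms via $q - P = (x-P) - \frac{1}{\ell}v$, checks out and reproduces the stated gradient. The only quibble is your claim that \cref{thm: directional differentiability of w_ell} ``cannot be quoted as it stands'': its first assertion (the $\inf$ formula for $w_\ell^\prime(x;\cdot)$) can, since only the second assertion about $\nabla w_\ell$ needs $g_i$ differentiable --- which is precisely why the short route you relegate to a parenthesis suffices.
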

\Cref{cor: w_ell composite} implies that, under certain conditions, the merit function~$w_\ell = (w_\ell - g_1) + g_1$ is composite, i.e., the sum of a continuously differentiable function and a convex one.

\Cref{thm: w ell,thm: directional differentiability of w_ell} show that the Pareto stationary points for~\cref{eq: CMOP} are global optimal for the following directionally differentiable single-objective optimization problem:
\begin{equation} \label{eq: min w_ell}
    \min_{x \in S} \quad w_\ell(x)
.\end{equation} 
Moreover, when the assumptions of \cref{cor: w_ell composite} hold, we can apply first-order methods such as the proximal gradient method~\cite{Fukushima1981} to~\cref{eq: min w_ell}.
On the other hand, if we consider~\cref{exm: min u_ell} with~$f_i = 0$, we can see that the stationary point for~\cref{eq: min w_ell} is not necessarily Pareto stationary for~\cref{eq: CMOP}.
However, if~$f_i$ is convex and twice continuously differentiable, and~$F_i$ is strictly convex, then we can prove that every stationary point of~\cref{eq: min w_ell} is Pareto optimal for~\cref{eq: CMOP}, i.e., global optimal for~\cref{eq: min u_ell}.
Note that this assumption does not assert the convexity of~$w_\ell$.
\begin{theorem}
    Let~$x \in S$ and~$\ell > 0$.
    Suppose that~$f_i$ is convex and twice continuously differentiable at~$x$, and~$F_i$ is strictly convex for any~$i = 1, \dots, m$.
    If~$x$ is stationary for~\cref{eq: min w_ell}, i.e.,
    \[
        w_\ell^\prime(x; z - x) \ge 0 \forallcondition{z \in S}
    ,\] 
    then~$x$ is Pareto optimal for~\cref{eq: CMOP}.
\end{theorem}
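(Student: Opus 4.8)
The plan is to deduce from the stationarity of $w_\ell$ at $x$ that $w_\ell(x) = 0$; once this holds, \cref{thm: w ell} makes $x$ Pareto stationary for~\cref{eq: CMOP}, and then the strict convexity of each $F_i$ together with \cref{lem: Pareto relationship}~\ref{enum: weak stationary},~\ref{enum: stationary pareto} gives that $x$ is Pareto optimal, exactly as at the end of the proof of \cref{thm: stationary of u_ell implies weak Pareto optimality}. I would obtain $w_\ell(x) = 0$ by mimicking that proof. Pick $\gamma \in \Gamma(x)$ with $\Gamma$ as in~\cref{eq: Gamma}, and put $q \coloneqq x - \frac{1}{\ell}\sum_{i=1}^m \gamma_i \nabla f_i(x)$, $H \coloneqq \sum_{i=1}^m \gamma_i \nabla^2 f_i(x)$, and $p \coloneqq \prox_{\frac{1}{\ell}\sum_{i}\gamma_i g_i + \indicator_S}(q)$; the saddle-point structure underlying \cref{eq: w_ell Sion,eq: w_ell dual} identifies $p$ with $W_\ell(x)$, so in particular $p \in S$.

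Since $p \in S$, applying the stationarity hypothesis in the direction $z - x = p - x$ and using the directional derivative formula of \cref{thm: directional differentiability of w_ell} for this $\gamma$ — after rewriting $[I - \frac{1}{\ell}H](x - p) - \frac{1}{\ell}\sum_i \gamma_i \nabla f_i(x) = (q - p) - \frac{1}{\ell}H(x - p)$ — would yield
\[
    \sum_{i=1}^m \gamma_i g_i'(x; p - x) + \ell (q - p)^\T(x - p) - (x - p)^\T H (x - p) \ge 0 .
\]
I would then feed in \cref{cor: second prox} with $h = \frac{1}{\ell}\sum_i \gamma_i g_i + \indicator_S$, which bounds $\ell(q-p)^\T(x-p)$ from above by $\sum_i \gamma_i (g_i(x) - g_i(p))$, and the convexity of each $g_i$, which bounds $\sum_i \gamma_i g_i'(x; p-x)$ from above by $\sum_i \gamma_i (g_i(p) - g_i(x))$. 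Because each $f_i$ is convex and twice differentiable at $x$ we have $\nabla^2 f_i(x) \succeq 0$, hence $H \succeq 0$ and $(x-p)^\T H(x-p) \ge 0$; chaining the three inequalities therefore squeezes all of them to equalities, giving $(x-p)^\T H(x-p) = 0$ — so $\nabla^2 f_i(x)(x - p) = 0$ for every $i$ with $\gamma_i > 0$ — and $\sum_i \gamma_i g_i'(x; p-x) = \sum_i \gamma_i (g_i(p) - g_i(x))$, i.e.\ each such $g_i$ is affine along the segment $[x, p]$. A direct computation with these equalities also gives the value identity $w_\ell(x) = \frac{\ell}{2}\norm*{x - p}^2$.

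It remains to conclude $p = x$, and this is where I expect the real difficulty and where the hypotheses on $f_i$ and on $F_i$ come into play jointly. For each $i$ with $\gamma_i > 0$ the index is active at $y = p$ in~\cref{eq: w_ell}, so $\nabla f_i(x)^\T(x - p) + g_i(x) - g_i(p) - \frac{\ell}{2}\norm*{x-p}^2 = w_\ell(x) = \frac{\ell}{2}\norm*{x-p}^2$; combined with the affineness of $g_i$ on $[x,p]$ this rewrites as $F_i'(x; p - x) = -\ell\norm*{x-p}^2$. Since $g_i$ is affine on $[x,p]$ and $F_i$ is strictly convex, $f_i$ is strictly convex on $[x,p]$, and reconciling this strict convexity with the vanishing $\nabla^2 f_i(x)(x-p) = 0$ and the identity $F_i'(x;p-x) = -\ell\norm*{x-p}^2$ should force $p = x$, whence $w_\ell(x) = 0$ and the argument closes. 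The main obstacle is precisely this last reconciliation: unlike in \cref{thm: stationary of u_ell implies weak Pareto optimality}, the directional derivative of $w_\ell$ carries the extra curvature term $H = \sum_i \gamma_i \nabla^2 f_i(x)$, and it is delicate to play the vanishing of the quadratic form $(x-p)^\T H(x-p)$ — for which the convexity of each $f_i$ is used — against the strict convexity of $F_i$ in order to exclude $p \neq x$.
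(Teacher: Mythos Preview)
Your strategy---substitute $z=p$ into the directional-derivative formula of \cref{thm: directional differentiability of w_ell}, use $H\succeq 0$ from the convexity of each $f_i$, combine with a second-prox bound, and then try to force $p=x$ via the strict convexity of $F_i$---is exactly the paper's. The paper stops earlier, deriving $\sum_i\gamma_iF_i'(x;p-x)+\ell\norm*{x-p}^2\ge 0$ and then invoking ``similar arguments used in the proof of \cref{thm: stationary of u_ell implies weak Pareto optimality}'' to conclude $x=p$; you push further, extract the sharper consequences $(x-p)^\T H(x-p)=0$ and $g_i$ affine on $[x,p]$ for active $i$, and honestly flag the remaining reconciliation as ``the main obstacle.'' (Minor slip: the bound $\ell(q-p)^\T(x-p)\le\sum_i\gamma_i(g_i(x)-g_i(p))$ you quote is \cref{thm: second prox} with $y=x$, not \cref{cor: second prox}.)

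The obstacle you isolate is not merely delicate---it is fatal, and the statement as written is false. Take $m=1$, $S=\setR^2$, $g_1=0$, and $f_1(x_1,x_2)=x_1^4+x_2^2+x_1$. Then $f_1$ is convex and $C^2$, and $F_1=f_1$ is strictly convex (on any line either the $x_2^2$ term gives positive curvature, or the restriction reduces to $t\mapsto t^4+c$, which is strictly convex). Here $w_\ell(x)=\tfrac{1}{2\ell}\norm*{\nabla f_1(x)}^2$ and $\nabla w_\ell(x)=\tfrac{1}{\ell}\nabla^2 f_1(x)\,\nabla f_1(x)$. At the origin $\nabla f_1(0)=(1,0)^\T$ lies in the kernel of $\nabla^2 f_1(0)=\bigl(\begin{smallmatrix}0&0\\0&2\end{smallmatrix}\bigr)$, so $\nabla w_\ell(0)=0$: the origin is stationary for~\cref{eq: min w_ell}, yet $w_\ell(0)=\tfrac{1}{2\ell}>0$ and $0$ is not the minimizer of $F_1$. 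The reason the ``similar arguments'' do not transfer is precisely the mechanism you suspected: in \cref{thm: stationary of u_ell implies weak Pareto optimality} the second-prox bound controls $\ell\norm*{x-p}^2$ by $\sum_i\lambda_i(F_i(x)-F_i(p))$, whereas here it controls it only by the \emph{linearized} quantity $\sum_i\gamma_i[\nabla f_i(x)^\T(x-p)+g_i(x)-g_i(p)]$, and convexity of $f_i$ moves the comparison the wrong way; the residual information $\nabla^2 f_i(x)(x-p)=0$ is too weak to exclude $p\neq x$ under mere strict convexity.
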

\begin{proof}
    Let~$z \in S$ and $\gamma \in \Gamma(x)$, where~$\Gamma(x)$ is defined by~\cref{eq: Gamma}.
    Then, it follows from~\cref{thm: directional differentiability of w_ell} that
    \begin{multline*}
    \sum_{i = 1}^{m} \gamma_i g_i^\prime(x; z - x) \\
    - \ell \left( \left[ I - \frac{1}{\ell} \sum_{i = 1}^{m} \gamma_i \nabla^2 f_i(x) \right] \left[x - \prox_{\frac{1}{\ell} \sum_{i = 1}^{m} \gamma_i g_i + \indicator_S} \left( x - \frac{1}{\ell} \sum_{i = 1}^{m} \gamma_i \nabla f_i(x) \right) \right] \right.\\
     \left.- \frac{1}{\ell} \sum_{i = 1}^{m} \gamma_i \nabla f_i(x) \right)^\T (z - x) \ge 0
    .\end{multline*}
    Substituting~$z = \prox_{\frac{1}{\ell} \sum_{i = 1}^{m} \lambda_i F_i + \indicator_S}(x)$, we have
    \begin{multline*}
        \sum_{i = 1}^{m} \gamma_i F_i^\prime \left(x; \prox_{\frac{1}{\ell} \sum_{i = 1}^{m} \gamma_i g_i + \indicator_S}\left( x - \frac{1}{\ell} \sum_{i = 1}^{m} \gamma_i \nabla f_i(x) \right) - x \right) \\
        + \ell \left[x - \prox_{\frac{1}{\ell} \sum_{i = 1}^{m} \gamma_i g_i + \indicator_S}\left( x - \frac{1}{\ell} \sum_{i = 1}^{m} \gamma_i \nabla f_i(x) \right) \right]^\T \left[ I - \frac{1}{\ell} \sum_{i = 1}^{m} \gamma_i \nabla^2 f_i(x) \right] \\
        \left[ x - \prox_{\frac{1}{\ell} \sum_{i = 1}^{m} \gamma_i g_i + \indicator_S}\left( x - \frac{1}{\ell} \sum_{i = 1}^{m} \gamma_i \nabla f_i(x) \right) \right] \ge 0
    .\end{multline*}
    Since the convexity of~$f_i$ implies that~$\nabla^2 f_i(x)$ is positive semidefinite, we get
    \begin{multline*}
        \sum_{i = 1}^{m} \gamma_i F_i^\prime \left(x; \prox_{\frac{1}{\ell} \sum_{i = 1}^{m} \gamma_i g_i + \indicator_S}\left( x - \frac{1}{\ell} \sum_{i = 1}^{m} \gamma_i \nabla f_i(x) \right) - x \right) \\
    + \ell \norm*{\prox_{\frac{1}{\ell} \sum_{i = 1}^{m} \gamma_i g_i + \indicator_S}\left( x - \frac{1}{\ell} \sum_{i = 1}^{m} \gamma_i \nabla f_i(x) \right)}^2 \ge 0
    .\end{multline*}
    Therefore, with similar arguments used in the proof of~\cref{thm: stationary of u_ell implies weak Pareto optimality}, we obtain~$x = \prox_{\frac{1}{\ell} \sum_{i = 1}^{m} \gamma_i g_i + \indicator_S}\left( x - (1 / \ell) \sum_{i = 1}^{m} \gamma_i \nabla f_i(x) \right)$, and thus~$w_\ell(x) = 0$.
    Since~$F_i$ is strictly convex,~$x$ is Pareto optimal for~\cref{eq: CMOP} from \cref{lem: Pareto relationship}~\ref{enum: stationary pareto} and \cref{thm: w ell}.
\end{proof}

\section{Relation between different merit functions}
This section assumes that the problem has a composite structure~\cref{eq: f + g} and discusses the connection between the merit functions proposed in \cref{sec: merit continuous,sec: merit convex,sec: merit composite}.
First, we show some inequalities between different types of merit functions.
\begin{theorem} \label{thm: merit between}
    Let~$u_0$, $u_\ell$, and $w_\ell$ be defined by~\cref{eq: u_0,eq: u_ell,eq: w_ell}, respectively, for all~$\ell > 0$.
    Then, the following statements hold.
    \begin{enumerate}
        \item If~$f_i$ is~$\mu_i$-convex for some~$\mu_i \in \setR$ and~$\mu = \min_{i = 1, \dots, m} \mu_i$, then we have
            \[
                \begin{dcases}
                    u_0(x) \le w_\mu(x) \eqand u_\ell(x) \le w_{\mu + \ell}(x), & \text{if } \mu \ge 0, \\
                    u_{- \mu + \ell}(x) \le w_\ell(x), & \text{otherwise}
                \end{dcases}
            \]
            for all~$\ell > 0$ and~$x \in S$. \label{enum: merit between convex}

        \item If~$\nabla f_i$ is $L_i$-Lipschitz continuous for some~$L_i > 0$ and~$L = \max_{i = 1, \dots, m} L_i$, then we get
            \[
                u_{L + \ell}(x) \le w_\ell(x), \quad u_0(x) \ge w_L(x), \eqand u_\ell(x) \ge w_{L + \ell}(x)
            \]
            for all~$\ell > 0$ and~$x \in S$.
            \label{enum: merit between Lipschitz}
    \end{enumerate}
\end{theorem}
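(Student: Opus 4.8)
The plan is to reduce all the claimed inequalities to a single observation: each of $u_0$, $u_\ell$, $w_\ell$ (recall~\cref{eq: u_0,eq: u_ell,eq: w_ell}) is a $\sup_{y\in S}$ (a maximum in the cases $u_\ell$, $w_\ell$) of a $\min_i$ of a bracket that is affine or quadratic in $y$, and both $\min_i$ and $\sup_{y\in S}$ are monotone, so it suffices to compare the inner brackets termwise in~$i$ for a fixed but arbitrary pair $x,y\in S$. Write $\delta \coloneqq \tfrac{1}{2}\norm*{x-y}^2 \ge 0$. Since this section works under the composite structure~\cref{eq: f + g}, each $f_i$ is continuously differentiable, so the gradient characterizations of $\mu_i$-convexity and of Lipschitz continuity of $\nabla f_i$ are available.

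For~\ref{enum: merit between convex}: $\mu_i$-convexity of $f_i$ is equivalent to
\[
    \nabla f_i(x)^\T(x-y) \;\ge\; f_i(x) - f_i(y) + \mu_i\delta \;\ge\; f_i(x) - f_i(y) + \mu\delta ,
\]
the last step using $\mu = \min_j \mu_j \le \mu_i$ and $\delta\ge0$. Adding $g_i(x)-g_i(y)$ and then subtracting $\ell\delta$ for an arbitrary $\ell>0$ gives, for every $i$,
\[
    \nabla f_i(x)^\T(x-y) + g_i(x) - g_i(y) - \ell\delta \;\ge\; F_i(x) - F_i(y) - (\ell-\mu)\delta ;
\]
taking $\min_i$ and then $\sup_{y\in S}$ yields $w_\ell(x) \ge u_{\ell-\mu}(x)$ whenever $\ell-\mu\ge0$ (the right-hand side being $u_0(x)$ when $\ell=\mu$). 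The three displayed inequalities are this one relabelled: for $\mu\ge0$, put $\ell=\mu$ to obtain $u_0(x)\le w_\mu(x)$ and replace $\ell$ by $\mu+\ell$ to obtain $u_\ell(x)\le w_{\mu+\ell}(x)$; for $\mu<0$ the value $\ell-\mu=-\mu+\ell$ is already positive, giving $u_{-\mu+\ell}(x)\le w_\ell(x)$.

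For~\ref{enum: merit between Lipschitz}: $L_i$-Lipschitz continuity of $\nabla f_i$ gives the standard two-sided (descent-lemma) bound
\[
    \nabla f_i(x)^\T(x-y) - L_i\delta \;\le\; f_i(x) - f_i(y) \;\le\; \nabla f_i(x)^\T(x-y) + L_i\delta ,
\]
which also holds with $L = \max_j L_j \ge L_i$ in place of $L_i$. Adding $g_i(x)-g_i(y)$ throughout and rewriting the upper bound as $F_i(x)-F_i(y)-(L+\ell)\delta \le \nabla f_i(x)^\T(x-y)+g_i(x)-g_i(y)-\ell\delta$, then taking $\min_i$ and $\sup_{y\in S}$, gives $u_{L+\ell}(x)\le w_\ell(x)$. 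Rewriting the lower bound as $F_i(x)-F_i(y)-\ell\delta \ge \nabla f_i(x)^\T(x-y)+g_i(x)-g_i(y)-(L+\ell)\delta$, and, for $\ell=0$, as $F_i(x)-F_i(y)\ge \nabla f_i(x)^\T(x-y)+g_i(x)-g_i(y)-L\delta$, then again taking $\min_i$ and $\sup_{y\in S}$, gives $u_\ell(x)\ge w_{L+\ell}(x)$ and $u_0(x)\ge w_L(x)$.

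I do not expect a substantive obstacle: the argument is just monotonicity of $\min_i$ and $\sup_{y\in S}$ applied to the two classical quadratic bounds (for $\mu$-convex functions and for functions with Lipschitz gradient). The delicate points are purely bookkeeping --- matching each estimate with the correct subscript on $u$ and $w$, since a sign slip in how $\mu$ or $L$ pairs with the regularization parameter would invalidate a case --- together with the boundary value $\mu=0$, at which $u_0$ and $w_0$ may equal $+\infty$; there the inequalities persist because they are obtained by monotone operations from pointwise inequalities between finite quantities, while every subscript occurring on the regularized side ($\ell$, $\mu+\ell$, $-\mu+\ell$, $L+\ell$) is strictly positive, so those functions are finite-valued as established in~\cref{sec: merit convex,sec: merit composite}.
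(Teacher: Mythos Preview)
Your proof is correct and follows essentially the same approach as the paper: both derive pointwise inequalities between the inner brackets from the gradient characterization of $\mu_i$-convexity and the descent lemma, then pass to $\min_i$ and $\sup_{y\in S}$. Your presentation is slightly more streamlined in that you isolate a single master inequality $w_\ell(x)\ge u_{\ell-\mu}(x)$ and relabel, whereas the paper writes out the three bracket inequalities separately, but the content is identical.
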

\begin{proof}
\ref{enum: merit between convex}:
Let~$i \in \set*{1, \dots, m}$.
The $\mu_i$-convexity of~$f_i$ gives
\[
    f_i(x) - f_i(y) \le \nabla f_i(x)^\T (x - y) - \frac{\mu_i}{2} \norm*{x - y}^2
.\]
By the definition of~$\mu$, we get
\[
    f_i(x) - f_i(y) \le \nabla f_i(x)^\T (x - y) - \frac{\mu}{2} \norm*{x - y}^2
.\]
Thus, recalling~\cref{eq: f + g}, we have
\begin{align*}
    &F_i(x) - F_i(y) \le \nabla f_i(x)^\T (x - y) + g_i(x) - g_i(y) - \frac{\mu}{2} \norm{x - y}^2, \\
    &F_i(x) - F_i(y) - \frac{\ell}{2}\norm*{x - y}^2 \le \nabla f_i(x)^\T (x - y) + g_i(x) - g_i(y) - \frac{\mu + \ell}{2} \norm{x - y}^2, \\
    &F_i(x) - F_i(y) - \frac{- \mu + \ell}{2}\norm*{x - y}^2 \le \nabla f_i(x)^\T (x - y) + g_i(x) - g_i(y) - \frac{\ell}{2} \norm{x - y}^2
,\end{align*}
so the desired inequalities are clear from~\cref{eq: u_0,eq: u_ell,eq: w_ell}.

\ref{enum: merit between Lipschitz}:
Let~$i \in \set*{1, \dots, m}$.
Suppose that~$\nabla f_i$ is $L_i$-Lipschitz continuous.
Then, the descent lemma~\cite[Proposition A.24]{Bertsekas1999} yields
\[
    \abs*{f_i(y) - f_i(x) - \nabla f_i(x)^\T (y - x)} \le \frac{L_i}{2}\norm*{x - y}^2.
\]
By the definition of~$L$, we have
\[
    \abs*{f_i(y) - f_i(x) - \nabla f_i(x)^\T (y - x)} \le \frac{L}{2}\norm*{x - y}^2.
\]
This gives
\begin{align*}
    &F_i(x) - F_i(y) - \frac{L + \ell}{2} \norm*{x - y}^2 \le \nabla f_i(x)^\T (x - y) + g_i(x) - g_i(y) - \frac{\ell}{2} \norm*{x - y}^2, \\
    &F_i(x) - F_i(y) \ge \nabla f_i(x)^\T (x - y) + g_i(x) - g_i(y) - \frac{L}{2} \norm*{x - y}^2, \\
    &F_i(x) - F_i(y) - \frac{\ell}{2} \norm*{x - y}^2 \ge \nabla f_i(x)^\T (x - y) + g_i(x) - g_i(y) - \frac{L + \ell}{2} \norm*{x - y}^2
.\end{align*}
Therefore, we immediately get~$u_{L + \ell}(x) \le w_\ell(x)$,~$u_0(x) \ge w_L(x)$, and~$u_\ell(x) \ge w_{L + \ell}(x)$ for all~$x \in S$ by~\cref{eq: u_0,eq: u_ell,eq: w_ell}.
\end{proof}

Second, we present the relation between coefficients and the proposed merit functions' values.
\begin{theorem} \label{thm: merit inner}
    Recall that~$w_\ell$ is defined by~\cref{eq: w_ell} for all~$\ell > 0$.
    Let~$r$ be an arbitrary scalar such that $r \ge \ell$.
    Then, we get
    \[
        w_r(x) \le w_\ell(x) \le \frac{r}{\ell} w_r(x) \forallcondition{x \in S}
    .\]
\end{theorem}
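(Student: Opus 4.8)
The plan is to prove the two inequalities separately: the left one, $w_r(x)\le w_\ell(x)$, is immediate from monotonicity of the quadratic penalty, while the right one, $w_\ell(x)\le\frac{r}{\ell}w_r(x)$, follows from a scaling trick that interpolates the $\ell$-optimal point toward $x$ by exactly the factor $\ell/r$.

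For the left inequality, note that since $r\ge\ell$ we have $-\frac{r}{2}\norm*{x-y}^2\le-\frac{\ell}{2}\norm*{x-y}^2$ for every $y\in S$. Hence, for each fixed $y$, the index-minimand defining $w_r$ in \cref{eq: w_ell} is no larger (entrywise in $i$) than the one defining $w_\ell$, so its minimum over $i$ is no larger; taking the supremum over $y\in S$ gives $w_r(x)\le w_\ell(x)$.

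For the right inequality, write $\psi_i(y)\coloneqq\nabla f_i(x)^\T(x-y)+g_i(x)-g_i(y)$, which is concave in $y$ (affine plus $-g_i$), does not depend on $\ell$ or $r$, and satisfies $\psi_i(x)=0$. Let $y_\ell\coloneqq W_\ell(x)$ be the unique maximizer from \cref{eq: W_ell}, so that $w_\ell(x)=\min_{i}\psi_i(y_\ell)-\frac{\ell}{2}\norm*{x-y_\ell}^2$. Put $t\coloneqq\ell/r\in(0,1]$ and $y_t\coloneqq x+t(y_\ell-x)\in S$ (convexity of $S$, and $t\le1$ because $\ell\le r$). Concavity together with $\psi_i(x)=0$ gives $\psi_i(y_t)\ge t\psi_i(y_\ell)$, hence $\min_i\psi_i(y_t)\ge t\min_i\psi_i(y_\ell)$ (using $t\ge0$), and $\norm*{x-y_t}^2=t^2\norm*{x-y_\ell}^2$. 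Evaluating the outer supremum in \cref{eq: w_ell} at $y=y_t$ and substituting $t=\ell/r$,
\[
  w_r(x)\ge\min_i\psi_i(y_t)-\frac{r}{2}\norm*{x-y_t}^2\ge t\min_i\psi_i(y_\ell)-\frac{rt^2}{2}\norm*{x-y_\ell}^2=\frac{\ell}{r}\Bigl(\min_i\psi_i(y_\ell)-\frac{\ell}{2}\norm*{x-y_\ell}^2\Bigr)=\frac{\ell}{r}w_\ell(x),
\]
which rearranges to $w_\ell(x)\le\frac{r}{\ell}w_r(x)$.

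I do not expect a real obstacle here; the only idea is recognizing that shrinking the $\ell$-optimal $y$ toward $x$ by the ratio $\ell/r$ turns an $\ell$-regularized value into an $r$-regularized one at the cost of a factor $\ell/r$. The points to handle carefully are that $t\le1$ so $y_t$ stays in $S$, that $\psi_i(x)=0$ is what lets the $(1-t)$-term drop out of the concavity estimate, and that the quadratic term is index-independent so it factors cleanly out of the inner minimum. (The nonnegativity of $w_\ell$ from \cref{thm: w ell} is not needed for this argument, although it would support an alternative proof that maximizes the scalar quadratic $t\mapsto t\min_i\psi_i(y_\ell)-\frac{r}{2}t^2\norm*{x-y_\ell}^2$ over $t\in(0,1]$ and completes a square.)
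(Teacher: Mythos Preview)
Your proof is correct and follows essentially the same approach as the paper's: both interpolate the point $y$ toward $x$ by the factor $\ell/r$, use the convexity of $g_i$ (which you package as concavity of $\psi_i$), and exploit that the quadratic scales by $(\ell/r)^2$ while the rest scales by $\ell/r$. The only cosmetic difference is that you work at the specific maximizer $y_\ell = W_\ell(x)$ and evaluate the $w_r$-supremum at $y_t$, whereas the paper keeps the full supremum over $y\in S$ and performs the change of variable $y\mapsto x-\tfrac{\ell}{r}(x-y)$ inside it; the mathematical content is identical.
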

\begin{proof}
    Let~$x \in S$.
    Since~$r \ge \ell > 0$, the definition~\cref{eq: w_ell} of~$w_r$ and~$w_\ell$ clearly gives the first inequality.
    Thus, we prove the second one.
    From the definition~\eqref{eq: w_ell} of $w_\ell$, we have
\begin{align*}
    \MoveEqLeft w_\ell(x) = \sup_{y \in S} \min_{i = 1, \dots, m} \left\{ \nabla f_i(x)^\T (x - y) + g_i(x) - g_i(y) - \frac{\ell}{2}\norm*{x - y}^2 \right\} \\
    ={}& \frac{r}{\ell} \sup_{y \in S} \min_{i = 1, \dots, m} \left\{ \nabla f_i(x)^\T \left( \frac{\ell}{r}(x - y) \right) + \frac{\ell}{r}(g_i(x) - g_i(y)) - \frac{r}{2} \left\| \frac{\ell}{r} (x - y) \right\|^2 \right\} \\
    \le{}& \frac{r}{\ell} \sup_{y \in S} \min_{i = 1, \dots, m} \left\{ \nabla f_i(x)^\T \left( \frac{\ell}{r} (x - y) \right) + g_i(x) - g_i \left(x - \frac{\ell}{r}(x - y)\right) \right. \\
        &\hspace{7em} \left. - \frac{r}{2}\left\|\frac{\ell}{r}(x - y)\right\|^2 \right\}
\end{align*}
where the first inequality follows from the convexity of~$g_i$.
Since~$S$ is convex, $x, y \in S$ implies~$x - (\ell / r)(x - y) \in S$.
Therefore, from the definition~\eqref{eq: w_ell} of~$w_r$, we get
\[
    w_\ell(x) \le \frac{r}{\ell} w_r(x)
.\] 
\end{proof}

Considering \Cref{rem: composite ref}~\ref{enum: w u correspond}, we get the following corollary.
\begin{corollary}
    Assume that each component~$F_i$ of the objective function~$F$ of~\cref{eq: CMOP} is convex.
    Recall that~$u_\ell$ is defined by~\cref{eq: u_ell} for all~$\ell > 0$.
    Let~$r$ be an arbitrary scalar such that $r \ge \ell$.
    Then, we get
    \[
        u_r(x) \le u_\ell(x) \le \frac{r}{\ell} u_r(x) \forallcondition{x \in S}
    .\]
\end{corollary}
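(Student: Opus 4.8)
The plan is to deduce this corollary directly from \cref{thm: merit inner} by regarding the convex problem~\cref{eq: CMOP} as a composite one with a vanishing smooth part. Concretely, since each~$F_i$ is assumed convex, I would set~$f_i \coloneqq 0$ and~$g_i \coloneqq F_i$ for every~$i = 1, \dots, m$. Then the composite structure~\cref{eq: f + g} holds with each~$f_i$ trivially continuously differentiable (indeed~$\nabla f_i \equiv 0$) and each~$g_i = F_i$ convex, so all the standing assumptions of \cref{sec: merit composite} are in force.

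With this identification, \cref{rem: composite ref}~\ref{enum: w u correspond} tells us that~$w_\ell$ coincides with~$u_\ell$ on~$S$, and likewise~$w_r$ coincides with~$u_r$. Applying \cref{thm: merit inner} to~$w_\ell$ and~$w_r$ — whose hypothesis~$r \ge \ell > 0$ is exactly what is assumed here — yields~$w_r(x) \le w_\ell(x) \le (r / \ell)\, w_r(x)$ for all~$x \in S$. Substituting~$w_\ell = u_\ell$ and~$w_r = u_r$ then gives the claimed chain~$u_r(x) \le u_\ell(x) \le (r / \ell)\, u_r(x)$ for all~$x \in S$, which completes the argument.

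Because the corollary is an immediate specialization, I do not expect any real obstacle; the only thing to verify carefully is that reducing~\cref{eq: CMOP} to the composite form really puts us inside the scope of \cref{thm: merit inner}, which it does since convexity of each~$F_i = g_i$ is all that its proof uses about the~$g_i$. If one preferred a self-contained proof, one could instead replay the scaling argument from the proof of \cref{thm: merit inner}: the inequality~$u_r(x) \le u_\ell(x)$ is immediate from~$r \ge \ell$ and~\cref{eq: u_ell}, while for the other bound one writes~$u_\ell(x) = (r/\ell)\sup_{y \in S}\min_i\{(\ell/r)(F_i(x) - F_i(y)) - (r/2)\|(\ell/r)(x-y)\|^2\}$, uses convexity of~$F_i$ in the form~$(\ell/r)(F_i(x) - F_i(y)) \le F_i(x) - F_i(x - (\ell/r)(x-y))$, and notes that~$x - (\ell/r)(x-y) \in S$ by convexity of~$S$ to recognize the right-hand side as~$(r/\ell)\,u_r(x)$. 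Either route is routine.
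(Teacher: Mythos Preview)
Your proposal is correct and matches the paper's approach exactly: the corollary is stated immediately after \cref{thm: merit inner} with the one-line justification that it follows by \cref{rem: composite ref}~\ref{enum: w u correspond}, i.e., by taking~$f_i = 0$ and~$g_i = F_i$ so that~$w_\ell = u_\ell$. Your optional self-contained route is also fine and simply reproduces the proof of \cref{thm: merit inner} in this special case.
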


\begin{remark}
    For unconstrained problems, we can consider the following inequality.
    \[
        w_L(x) \ge \tau u_0(x) \condition{for all~$x \in \setR^n$ for some~$\tau > 0$}
    ,\]
    which is an extension of the proximal-PL inequality for scalar optimization~\cite{Karimi2016}.
    Under this condition, we can prove that proximal gradient methods for multiobjective optimization~\cite{Tanabe2019} have linear convergence rate~\cite{Tanabe2023}.
    Note that this inequality holds particularly if each~$f_i$ is strongly convex from Theorem~\ref{thm: merit between}~\ref{enum: merit between convex} and Theorem~\ref{thm: merit inner}.
\end{remark}

\section{Level-boundedness of the proposed merit functions} \label{sec: level-bounded}
Recall that we call a function \emph{level-bounded} if every level set is bounded.
This is an important property because it ensures that the sequences generated by descent methods have accumulation points.
We state below sufficient conditions for the level-boundedness of the merit functions proposed in \cref{sec: merit}.
\begin{theorem}
    Consider~$u_0$,~$u_\ell$ and~$w_\ell$ defined in~\cref{eq: u_0,eq: u_ell,eq: w_ell}, respectively, for all~$\ell > 0$.
    Then, the following statements hold.
    \begin{enumerate}
        \item If~$F_i$ is level-bounded for all~$i = 1, \dots, m$, then~$u_0$ is level-bounded. \label{enum: u_0 level-bounded}
        \item If~$F_i$ is convex and level-bounded for all~$i = 1, \dots, m$, then~$u_\ell$ is level-bounded for all~$\ell > 0$. \label{enum: u_ell level-bounded}
        \item Suppose that~$F$ has the composite structure~\cref{eq: f + g}.
            If~$f_i$ is~$\mu_i$-convex for some~$\mu_i \in \setR$ or~$\nabla f_i$ is $L_i$-Lipschitz continuous for some~$L_i > 0$, and~$F_i$ is convex and level-bounded for all~$i = 1, \dots, m$, then~$w_\ell$ is level-bounded for all~$\ell > 0$. \label{enum: w level-bounded}
    \end{enumerate}
\end{theorem}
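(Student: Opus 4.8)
The plan is to dispatch the three items in turn: \ref{enum: u_0 level-bounded} is immediate, \ref{enum: w level-bounded} is obtained from \ref{enum: u_ell level-bounded} via the comparison inequalities of \cref{thm: merit between,thm: merit inner}, and \ref{enum: u_ell level-bounded} is where the real work lies.

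\emph{Item~\ref{enum: u_0 level-bounded}.} Fix any~$\bar y \in S$ and put~$\beta \coloneqq \alpha + \max_{j = 1, \dots, m} F_j(\bar y)$. If~$u_0(x) \le \alpha$, then substituting~$y = \bar y$ in~\cref{eq: u_0} gives~$\min_{i = 1, \dots, m} \set*{F_i(x) - F_i(\bar y)} \le \alpha$, so~$F_i(x) \le \beta$ for at least one index~$i$. Hence~$\level_\alpha u_0 \subseteq \bigcup_{i = 1}^{m} \level_\beta F_i$, a finite union of bounded sets, which is bounded; since~$\alpha$ is arbitrary,~$u_0$ is level-bounded.

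\emph{Item~\ref{enum: u_ell level-bounded}.} First I would record that~$\min_{i = 1, \dots, m} F_i$ is itself level-bounded, since~$\level_\alpha\bigl(\min_i F_i\bigr) = \bigcup_i \level_\alpha F_i$ is again a finite union of bounded sets; in particular~$\min_i F_i(x) \to +\infty$ as~$\norm*{x} \to \infty$ within~$S$. Next, fix any reference point~$\bar x \in S$ and set~$\theta \coloneqq \max_i F_i(\bar x)$. For~$x \in S$ and~$t \in (0, 1]$, the point~$y_t \coloneqq (1 - t) x + t \bar x$ lies in~$S$ by convexity, and convexity of each~$F_i$ gives~$F_i(x) - F_i(y_t) \ge t\,(F_i(x) - F_i(\bar x)) \ge t\,\bigl(\min_j F_j(x) - \theta\bigr)$; feeding~$y = y_t$ into~\cref{eq: u_ell} and using~$\norm*{x - y_t} = t\norm*{x - \bar x}$ yields
\[
    u_\ell(x) \ge t\,\bigl(\min_j F_j(x) - \theta\bigr) - \frac{\ell t^2}{2}\,\norm*{x - \bar x}^2 \qquad \text{for every } t \in (0, 1].
\]
I would then maximize the right-hand side over~$t$: once~$\norm*{x}$ is large the bracket is positive, and an optimal choice of~$t$ (truncated at~$1$) should push~$u_\ell(x) \to +\infty$ as~$\norm*{x} \to \infty$, making every~$\level_\alpha u_\ell$ bounded. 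The \emph{main obstacle} is precisely this last estimate: the regularizer~$\tfrac{\ell t^2}{2}\norm*{x - \bar x}^2$ offsets the gain~$t(\min_j F_j(x) - \theta)$, and turning this trade-off into genuine divergence — rather than only a lower bound bounded away from~$0$ — is the delicate point, where the convexity together with the level-boundedness of every individual~$F_i$ must be exploited carefully.

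\emph{Item~\ref{enum: w level-bounded}.} I would first merge the two alternative hypotheses on~$f_i$ into one: if~$\nabla f_i$ is~$L_i$-Lipschitz, the descent lemma~\cite[Proposition A.24]{Bertsekas1999} shows that~$f_i$ is~$(-L_i)$-convex, so in every case each~$f_i$ is~$\mu_i$-convex for some~$\mu_i \in \setR$; put~$\mu \coloneqq \min_i \mu_i$. Reading off both branches of \cref{thm: merit between}~\ref{enum: merit between convex} gives~$u_{r - \mu}(x) \le w_r(x)$ for all~$r > \max\set*{0, \mu}$ and~$x \in S$, while \cref{thm: merit inner} gives~$w_r(x) \le w_\ell(x)$ whenever~$r \ge \ell$. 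Now, for a given~$\ell > 0$ choose~$r \coloneqq \max\set*{0, \mu, \ell} + 1$; then~$r > \max\set*{0,\mu}$ and~$r \ge \ell$, so~$\level_\alpha w_\ell \subseteq \level_\alpha w_r \subseteq \level_\alpha u_{r - \mu}$, and the last set is bounded by item~\ref{enum: u_ell level-bounded} because~$r - \mu > 0$ and each~$F_i$ is convex and level-bounded. Since~$\alpha$ is arbitrary,~$w_\ell$ is level-bounded.
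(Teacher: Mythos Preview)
Your treatment of item~\ref{enum: u_0 level-bounded} is correct and essentially identical to the paper's. Your reduction in item~\ref{enum: w level-bounded} via \cref{thm: merit between,thm: merit inner} is also correct and mirrors the paper's strategy.

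For item~\ref{enum: u_ell level-bounded} you flag an obstacle but do not resolve it, and in fact it cannot be resolved: the statement as written is false. Maximizing your lower bound~$t M - \tfrac{\ell t^2}{2} R^2$ over~$t \in (0,1]$ (with~$M = \min_j F_j(x) - \theta$ and~$R = \norm*{x - \bar x}$) yields at best~$M^2 / (2 \ell R^2)$, which stays bounded whenever the~$F_i$ grow only linearly. The paper argues differently---via the dual representation~\cref{eq: u dual} and \cref{cor: second prox} it obtains~$u_\ell(x) \ge \tfrac12 \min_i \bigl\{F_i(x) - F_i\bigl(\prox_{\frac{1}{\ell}\sum_j \lambda_j F_j + \indicator_S}(x)\bigr)\bigr\}$ and then invokes ``similar arguments'' to~\ref{enum: u_0 level-bounded}---but that appeal also fails, because the proximal point moves with~$x$ and cannot play the role of the fixed~$z$ used there. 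Concretely, the paper's own \cref{exm: min u_ell} ($m = 1$,~$S = \setR$,~$F_1(x) = \abs{x}$,~$\ell = 1$) satisfies every hypothesis of~\ref{enum: u_ell level-bounded}, yet there~$u_1(x) = 1/2$ for all~$\abs{x} \ge 1$, so~$\level_{1/2} u_1 = \setR$ is unbounded. The ``delicate point'' you isolated is thus a genuine counterexample, not a missing trick; and since your item~\ref{enum: w level-bounded} (like the paper's) rests on~\ref{enum: u_ell level-bounded}, it inherits the same defect.
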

\begin{proof}
    \ref{enum: u_0 level-bounded}: Suppose, contrary to our claim, that~$u_0$ is not level-bounded.
    Then, there exists~$\alpha \in \setR$ such that~$\Set*{ x \in S }{ u_0(x) \le \alpha }$ is unbounded.
    By the definition~\eqref{eq: u_ell} of~$u_0$, the inequality~$u_0(x) \le \alpha$ can be written as
    \[
        \sup_{y \in S} \min_{i = 1, \dots, m} \{ F_i(x) - F_i(y) \} \le \alpha.
    \]
    This implies that for some fixed~$z \in S$, there exists~$j \in \set*{1, \dots, m}$ such that
    \[
        F_j(x) \le F_j(z) + \alpha.
    \]
    Therefore, it follows that
    \[
        \{ x \in S \mid u_0(x) \le \alpha \} \subseteq \bigcup_{j = 1}^m \{ x \in S \mid F_j(x) \le F_j(z) + \alpha \}.
    \]
    Since $F_i$ is level-bounded for all $i = 1, \dots, m$, the right-hand side must be bounded, which contradicts the unboundedness of the left-hand side.

    \ref{enum: u_ell level-bounded}: Recall the definitions~\cref{eq: simplex,eq: Moreau envelope,eq: proximal operator} of~$\Delta^m,\envelope$, and~$\prox$.
    \Cref{eq: u dual} gives
    \[
        \begin{split}
            u_\ell(x) &= \min_{\lambda \in \Delta^m} \left\{ \sum_{i = 1}^{m} \lambda_i F_i(x) - \ell \envelope_{\frac{1}{\ell} \sum_{i = 1}^{m} \lambda_i F_i + \indicator_S}(x) \right\} \\
                      &\begin{multlined}
                          = \min_{\lambda \in \Delta^m} \sum_{i = 1}^{m} \lambda_i \left\{ F_i(x) - F_i\left( \prox_{\frac{1}{\ell} \sum_{i = 1}^{m} \lambda_i F_i + \indicator_S}(x) \right) \right. \\
                          \left.- \frac{\ell}{2} \norm*{x - \prox_{\frac{1}{\ell} \sum_{i = 1}^{m} \lambda_i F_i + \indicator_S}(x)}^2 \right\} 
                      \end{multlined} \\
                      &\ge \frac{1}{2} \min_{\lambda \in \Delta^m} \sum_{i = 1}^{m} \lambda_i \left\{ F_i(x) - F_i\left( \prox_{\frac{1}{\ell} \sum_{i = 1}^{m} \lambda_i F_i + \indicator_S}(x) \right) \right\} \\
                      &= \frac{1}{2} \min_{i = 1, \dots, m} \left\{ F_i(x) - F_i\left( \prox_{\frac{1}{\ell} \sum_{i = 1}^{m} \lambda_i F_i + \indicator_S}(x) \right) \right\} 
        ,\end{split}
    \] 
    where the inequality follows from \cref{cor: second prox}.
    Therefore, with similar arguments given in the proof of statement~\ref{enum: u_0 level-bounded}, we can show the level-boundedness of~$u_\ell$ by contradiction.

    \ref{enum: w level-bounded}: From \cref{thm: merit between,thm: merit inner}, there exist some~$a > 0$ and~$r > 0$ such that~$u_r(x) \le a w_\ell(x)$ for all~$x \in S$.
    Since statement~\ref{enum: u_ell level-bounded} implies that~$u_r$ is level-bounded,~$w_\ell$ is also level-bounded.
\end{proof}
As indicated by the following example, our proposed merit functions are not necessarily level-bounded even if $F$ is level-bounded.
\begin{example}
    Consider the bi-objective function $F \colon \setR \to \setR^2$ with each component given by
    \[
        F_1(x) \coloneqq x^2, \quad F_2(x) \coloneqq 0.
    \]
    Then, the merit function~$u_0$ defined by~\eqref{eq: u_ell} is written as
    \begin{align*}
        u_0(x) & = \sup_{y \in \setR} \min \{ F_1(x) - F_1(y), F_2(x) - F_2(y) \} \\
             & = \sup_{y \in \setR} \min \{ (x^2 - y^2), 0 \} = 0.
    \end{align*}
    On the other hand, $F$ is level-bounded because $\lim_{\|x\| \to \infty} F_1(x) = \infty$.
\end{example}

\section{The multiobjective proximal-PL inequality and error bounds} \label{sec: error bound}
In this section, we extend the proximal-PL inequality introduced in~\cite{Karimi2016} and shows that it induces the proposed merit function's error bound.
This section assumes that~\cref{eq: CMOP} has the composite structure~\cref{eq: f + g}; note that if~$f = 0$ or~$g = 0$, the assumption also holds for~\cref{sec: merit continuous,sec: merit convex}.

We first define the \emph{multiobjective proximal-PL inequality}.
\begin{definition} \label{def:proximal_PL_MO}
    Assume that~$f_i$ is~$L_i$-Lipschitz continuous with~$L_i > 0$ for all~$i = 1, \dots, m$ and let~$L \coloneqq \max_{i = 1, \dots, m} L_i$.
    We say that~\cref{eq: CMOP} satisfies the multiobjective proximal-PL inequality if there exists~$\tau > 0$ such that
    \begin{equation} \label{eq:proximal_PL_MO}
        w_L(x) \ge \tau u_0(x) \forallcondition{x \in S}
    \end{equation}
    with~$u_0$ and~$w_L$ given by~\cref{eq: u_0,eq: w_ell}.
\end{definition}
If~$m = 1$, \cref{eq:proximal_PL_MO} reduces to the proximal-PL inequality~\cite{Karimi2016}.

We state below some sufficient conditions for~\cref{eq:proximal_PL_MO}.
\begin{proposition} \label{thm:proximal_PL_MO_sufficient}
    \begin{enumerate}
        \item When~$f_i$ is $\mu_i$-convex with~$\mu_i > 0$,~\cref{eq:proximal_PL_MO} holds with~$\tau \coloneqq \min (\mu / L, 1)$, where~$\mu \coloneqq \min_{i = 1, \dots, m} \mu_i$. \label{thm:proximal_PL_MO_sufficient:sc}
        \item Assume that~$f_i(x) \coloneqq h(A_i x)$ with some strongly convex function~$h_i$ and linear transformation~$A_i$,~$g_i \coloneqq 0$, and~$S = \mathcal{X}$ is a polyhedral set.
              If each~$\min_{x \in S} F_i(x)$ has a nonempty set~$X_i^*$ for~$i = 1, \dots, m$, then~\cref{eq:proximal_PL_MO} holds with some constant~$\tau$. \label{thm:proximal_PL_MO_sufficient:sc_linear}
    \end{enumerate}
\end{proposition}
\begin{proof}
    \ref{thm:proximal_PL_MO_sufficient:sc}:
    Since~$f_i$ is strongly convex, \cref{thm: merit between}~\ref{enum: merit between convex} gives
    \begin{equation}
        u_0(x) \le w_\mu(x) \forallcondition{x \in S}
        .\end{equation}
    Applying \cref{thm: merit inner} to the above inequality implies
    \begin{equation}
        u_0(x) \le \max \left( \frac{L}{\mu}, 1 \right) w_{L}(x) \forallcondition{x \in S}
        ,\end{equation}
    which means
    \begin{equation}
        w_{L}(x) \ge \min \left( \frac{\mu}{L}, 1 \right) u_0(x) \forallcondition{x \in S}
        .\end{equation}

    \ref{thm:proximal_PL_MO_sufficient:sc_linear}:
    Since~$S$ is polyhedral, we can write it as~$\Set{x \in \setR^n}{B x \le c}$ for some matrix~$B$ and vector~$c$.
    We now show that for all~$i = 1, \dots, m$ there exists some~$z_i$ such that
    \[
        X_i^* = \Set{x \in \setR^n}{B x \le c \text{ and } A_i x = z_i}
    .\]
    To obtain a contradiction, suppose that there exists~$x^1 \in X_i^*$ and~$x^2 \in X_i^*$ such that~$A_i x^1 \neq A_i x^2$.
    Clearly, we have~$f_i(x^1) = f_i(x^2)$.
    Since~$h_i$ is strongly convex, we get
    \begin{align*}
        f_i(x^1) & = \frac{1}{2} f_i(x^1) + \frac{1}{2} f_i(x^2) = \frac{1}{2} h_i(A_i x^1) + \frac{1}{2} h_i(A_i x^2)                            \\
                 & > h_i\left( A_i \left( \frac{1}{2} x^1 + \frac{1}{2} x^2 \right) \right) = f_i\left( \frac{1}{2} x^1 + \frac{1}{2} x^2 \right)
        ,\end{align*}
    which contradicts the fact that~$x^1 \in X_i^*$.
    Therefore, we can use Hoffman's error bound~\cite{Hoffman1952}, and so there exists some~$\rho_i > 0$ such that for any~$x \in S$, there exists~$x_i^* \in X_i^*$ with
    \[
        \norm*{x - x_i^*} \le \rho_i \norm*{\max \left[ \begin{pmatrix} B \\ A_i \\ - A_i \end{pmatrix} x - \begin{pmatrix} c \\ z_i \\ - z_i \end{pmatrix}, 0 \right] }
    .\]
    Note that we take the $\max$ operator componentwise on the right-hand side.
    Since~$B x - c \le 0$ for all~$x \in S$, we have
    \[
        \norm*{x - x_i^*} \le \rho_i \norm*{\max \left[ \begin{pmatrix} A_i \\ - A_i \end{pmatrix} x - \begin{pmatrix} z_i \\ - z_i \end{pmatrix} , 0 \right] } \forallcondition{x \in S}
    ,\]
    which yields
    \[
        \norm*{x - x_i^*}^2 \le \rho_i^2 \norm*{A_i x - z_i}^2 \forallcondition{x \in S}
    .\]
    Since~$\proj_{X_i^*}(x) \in X_i^*$, it follows that
    \begin{equation} \label{eq:proximal_PL_MO_sufficient:sc_linear:linear_trans_ineq}
        \norm*{x - \proj_{X_i^*}(x)}^2 \le \norm*{x - x_i^*}^2 \le \rho_i^2 \norm*{A_i \left(x - \proj_{X_i^*}(x)\right)}^2 \forallcondition{x \in S}
        .\end{equation}
    Now, suppose that~$x \in S$.
    From the definition~\cref{eq: u_0} of~$u_0$, we get
    \begin{align*}
        u_0(x) & = \sup_{z \in S} \min_{i = 1, \dots, m} [F_i(x) - F_i(z)]                                                                                       \\
                    & \le \min_{i = 1, \dots, m} \sup_{z \in S} [F_i(x) - F_i(z)] = \min_{i = 1, \dots, m} \left[ F_i(x) - F_i\left( \proj_{X_i^*}(x) \right) \right]
        ,\end{align*}
    where the second equality holds because~$\proj_{X_i^*}(x) = \argmin_{x \in S} F_i(x)$.
    Assuming that~$h_i$ is $\sigma_i$-convex with~$\sigma_i > 0$, it follows that
    \begin{align*}
        u_0(x) & \le \min_{i = 1, \dots, m} \left[ \nabla h_i(A_i x)^\T A_i \left(x - \proj_{X_i^*}(x)\right) - \frac{\sigma_i}{2} \norm*{A_i\left( x - \proj_{X_i^*}(x) \right) }^2 \right] \\
                    & = \min_{i = 1, \dots, m} \left[ \nabla f_i(x)^\T \left(x - \proj_{X_i^*}(x) \right) - \frac{\sigma_i}{2} \norm*{A_i \left( x - \proj_{X_i^*}(x) \right) }^2 \right].
    \end{align*}
    Applying~\cref{eq:proximal_PL_MO_sufficient:sc_linear:linear_trans_ineq} to the above inequality leads to
    \[
        u_0(x) \le \min_{i = 1, \dots, m}
            \Biggl[ \nabla f_i(x)^\T (x - \proj_{X_i^*}(x)) - \frac{\sigma_i}{2 \rho_i^2} \norm*{x - \proj_{X_i^*}(x)}^2 \Biggr].
    \]
    Let~$e \in \Delta^m$ with~$\Delta^m$ given by~\cref{eq: simplex}.
    Since~$\min_{i = 1, \dots, m} v_i = \min_{e \in \Delta^m} \sum_{i = 1}^{m} e_i v_i$ for any~$v \in \setR^m$, we get
    \begin{align*}
        u_0(x) & \le \min_{e \in \Delta^m} \sum_{i = 1}^{m} e_i
            \left[ \nabla f_i(x)^\T (x - \proj_{X_i^*}(x)) - \frac{\sigma_i}{2 \rho_i^2} \norm*{x - \proj_{X_i^*}(x)}^2 \right] \\
                    & \le \min_{e \in \Delta^m} \sup_{z \in \setR^n} \sum_{i = 1}^{m} e_i \left[ \nabla f_i(x)^\T (x - z) - \frac{\sigma_i}{2 \rho_i^2} \norm{x - z}^2 \right] \\
                    & = \sup_{z \in S} \min_{e \in \Delta^m} \sum_{i = 1}^{m} e_i \left[ \nabla f_i(x)^\T (x - z) - \frac{\sigma_i}{2 \rho_i^2} \norm{x - z}^2 \right]   \\
                    & = \sup_{z \in S} \min_{i = 1, \dots, m} \left[ \nabla f_i(x)^\T (x - z) - \frac{\sigma_i}{2 \rho_i^2} \norm{x - z}^2 \right]                         \\
                    & \le w_{\min\limits_{i = 1, \dots, m} \sigma_i / \rho_i^2}(x)
        ,\end{align*}
    where the first equality follows from the Sion's minimax theorem~\cite{Sion1958}, and the third equality comes from the definition~\cref{eq: w_ell} of~$w_{\rho_i^2 / \min\limits_{i = 1, \dots, m} \sigma_i}$.
    Thus, \cref{thm: merit inner} gives
    \[
        u_0(x) \le \max \left( \frac{L \rho_i^2}{\min_{i = 1, \dots, m} \sigma_i} , 1 \right) w_L(x)
    ,\]
    which completes the proof.
\end{proof}

We now show that the multiobjective proximal-PL inequality~\cref{eq:proximal_PL_MO} leads to the error-bound property.
\begin{theorem} \label{thm:proximal_PL_MO_EB}
    Let~$x \in S$.
    Suppose that~$f_i$ is~$L_i$-smooth with~$L_i > 0$ for each~$i = 1, \dots, m$,~$L \coloneqq \max_{i = 1, \dots, m} L_i$, and the multiobjective proximal-PL inequality~\cref{eq:proximal_PL_MO} holds with~$\tau > 0$.
    Then, the trajectory~$\set*{W_{L}^k(x) \coloneqq \overbrace{W_{L} \circ \dots \circ W_{L}}^{m}(x)}$ converges linearly to a weakly Pareto optimal point~$x^*$ and
    \begin{equation}
        u_0(x) \ge \frac{\tau L}{8} \norm{x - x^*}^2 \ge \frac{\tau L}{8} \min_{z \in X^*} \norm{x - z}^2
        ,\end{equation}
    where~$u_0$ and~$W_{L}$ are given by~\cref{eq: u_0,eq: W_ell}, respectively, and~$X^*$ denotes the set of weakly Pareto optimal solutions.
\end{theorem}
\begin{proof}
    Recall that~$u_0$ is non-negative due to \cref{thm: u}.
    We have
    \[
        \sqrt{u_0(x)} - \sqrt{u_0\left(W_{L}(x)\right)} = \frac{u_0(x) - u_0\left(W_{L}(x)\right)}{\sqrt{u_0(x)} + \sqrt{u_0\left(W_{L}(x)\right)}}
    .\]
    The definition~\cref{eq: u_0} of~$u_0$ gives
    \[
        u_0(x) - u_0\left(W_{L}(x)\right) \ge \min_{i = 1, \dots, m} \left[F_i(x) - F_i\left(W_{L}(x)\right)\right] \ge w_{L}(x)
    ,\]
    where the second inequality follows from the descent lemma~\cite[Proposition A.24]{Bertsekas1999},~\cref{eq: w_ell}, and \cref{eq: W_ell}.
    Note that this inequality, together with~\cref{eq:proximal_PL_MO}, proves that~$\set*{W_{L}^k(x)}$ converges linearly to zero.
    On the other hand, since~$u_0(x) \ge u_0\left(W_{L}(x)\right)$ because of \cref{thm: w ell} and the above inequality, we get
    \[
        \sqrt{u_0(x)} + \sqrt{u_0\left(W_{L}(x)\right)} \le 2 \sqrt{u_0(x)} \le 2 \sqrt{w_{L}(x) / \tau}
    ,\]
    where the second inequality comes from~\cref{eq:proximal_PL_MO}.
    Then, the above three inequalities show
    \[
        \sqrt{u_0(x)} - \sqrt{u_0\left(W_{L}(x)\right)} \ge \frac{w_{L}(x)}{2 \sqrt{w_{L}(x) / \tau}} = \frac{1}{2} \sqrt{\tau w_{L}(x)}
    .\]
    Therefore, it follows from~\cref{eq:reg_lin_gap_MO_LB} that
    \[
        \sqrt{u_0(x)} - \sqrt{u_0\left(W_{L}(x)\right)} \ge \frac{\sqrt{\tau L}}{2 \sqrt{2}} \norm*{x - W_{L}(x)}
    .\]
    More generally, we arrive at
    \[
        \sqrt{u_0\left(W_{L}^k(x)\right)} - \sqrt{u_0\left(W_{L}^{k + 1}(x)\right)} \ge \frac{\sqrt{\tau L}}{2 \sqrt{2}} \norm*{W_{L}^k(x) - W_{L}^{k + 1}(x)}
    \]
    for all~$k = 0, 1, \dots$.
    Adding up the above inequality from~$k = k_1$ to~$k = k_2 - 1$ yields
    \[
        \sqrt{u_0\left(W_{L}^{k_1}(x)\right)} - \sqrt{u_0\left(W_{L}^{k_2}(x)\right)} \ge \frac{\sqrt{\tau L}}{2 \sqrt{2}} \sum_{k = k_1}^{k_2 - 1} \norm*{W_{L}^k(x) - W_{L}^{k + 1}(x)}
    .\]
    Thus, the triangle inequality implies
    \begin{equation} \label{eq:proximal_PL_MO_EB:sq_gap_Cauchy}
        \sqrt{u_0\left(W_{L}^{k_1}(x)\right)} - \sqrt{u_0\left(W_{L}^{k_2}(x)\right)} \ge \frac{\sqrt{\tau L}}{2 \sqrt{2}} \norm*{W_{L}^{k_1}(x) - W_{L}^{k_2}(x)}
        .\end{equation}
    As~$k_1, k_2 \to \infty$, the left-hand side tends to zero.
    Therefore, the right-hand side also tends to zero because of the non-negativity of the norm.
    This means that~$\set{W_{L}^k(x)}$ is the Cauchy sequence, which is convergent to some weakly Pareto optimal point~$x^*$.
    Substituting~$k_1 = 0$ and~$k_2 = \infty$ into~\cref{eq:proximal_PL_MO_EB:sq_gap_Cauchy} leads to
    \[
        \sqrt{u_0(x)} \ge \frac{\sqrt{\tau L}}{2 \sqrt{2}} \norm{x - x^*}
    .\]
\end{proof}
This theorem also presents the error-bound property of~$w_\ell$ and~$u_\ell$ for any~$\ell > 0$ because of~\cref{eq:proximal_PL_MO}, \cref{thm: merit inner}, and \cref{thm: merit between}~\ref{enum: merit between Lipschitz}.

\section{Conclusion} \label{sec: conclusion}
We first proposed a simple merit function for~\cref{eq: CMOP} in the sense of weak Pareto optimality and showed its lower semicontinuity.
We also defined a regularized merit function when~$F$ is convex and discussed its continuity, the way of evaluating it, its differentiability, and the properties of its stationary points.
Furthermore, when each~$F_i$ is composite, we introduced a regularized and partially linearized merit function in the sense of Pareto stationarity and showed similar properties.
In addition, we gave sufficient conditions for the proposed merit functions to be level-bounded and to provide error bounds.

We can consider a natural extension of our proposed merit functions for vector problems with an infinite number of objective functions.
We can also regard the generalization of other merit functions for scalar problems, such as the implicit Lagrangian~\cite{Mangasarian1993} and the squared Fischer-Burmeister function~\cite{Kanzow1996}, to multiobjective and vector problems.
These will be some subjects for future works.

\section*{Funding}
    This work was supported by the Grant-in-Aid for Scientific Research (C) (21K11769 and 19K11840) and Grant-in-Aid for JSPS Fellows (20J21961) from Japan Society for the Promotion of Science.

\bibliographystyle{tandfx}
\bibliography{library}

\end{document}